\theoremstyle{definition}
\numberwithin{equation}{section}
\DeclareMathOperator{\Spec}{Spec}
\DeclareMathOperator{\Proj}{Proj}
\DeclareMathOperator{\Aut}{Aut}
\DeclareMathOperator{\Pic}{Pic}
\DeclareMathOperator{\res}{res}
\DeclareMathOperator{\Div}{Div}
\DeclareMathOperator{\RG}{R\Gamma}
\DeclareMathOperator{\HH}{H}
\DeclareMathOperator{\DD}{D}
\DeclareMathOperator{\id}{id}
\DeclareMathOperator{\ind}{ind}
\DeclareMathOperator{\Mod}{Mod}
\DeclareMathOperator{\GL}{GL}
\DeclareMathOperator{\Hom}{Hom}
\DeclareMathOperator{\Homcr}{Hom_{cr}}
\DeclareMathOperator{\RHom}{RHom}
\DeclareMathOperator{\Ab}{Ab}
\DeclareMathOperator{\Mat}{Mat}
\DeclareMathOperator{\Gal}{Gal}
\DeclareMathOperator{\Tot}{Tot}
\DeclareMathOperator{\Jac}{Jac}
\DeclareMathOperator{\Poly}{Poly}
\DeclareMathOperator{\cone}{Cone}
\newcommand{\pf}{{\rm pf}}
\newcommand{\sn}{{\rm sn}}
\newcommand{\R}{{\rm R}}
\newcommand{\FF}{\mathbb{F}}
\newcommand{\ZZ}{\mathbb{Z}}
\newcommand{\PP}{\mathbb{P}}
\newcommand{\bareta}{{\bar{\eta}}}
\newcommand{\LL}{\mathscr{L}}
\newcommand{\GG}{\mathbb{G}}
\newcommand{\F}{\mathcal{F}}
\newcommand{\GO}{\mathfrak{G}_0}
\newcommand{\tX}{{\tilde{X}}}
\newcommand{\tZ}{{\tilde{Z}}}
\newcommand{\Gk}{\Gal(k |k_0)}
\newcommand{\sep}{{\rm sep}}
\newcommand{\Ksep}{K^{\rm sep}}
\newcommand{\cont}{\mathrm{cont}}
\newcommand{\red}{{\rm red}}
\renewcommand{\div}{\mathrm{div}}
\newcommand{\nt}{^{\langle n\rangle}}
\newcommand{\lt}{^{\langle \ell\rangle}}
\newcommand{\Xnt}{{X\nt}}
\newcommand{\Unt}{{U\nt}}
\newcommand{\Vnt}{{V\nt}}
\newcommand{\Qlt}{_D^{\langle \ell\rangle}}
\newtheorem{df}{Definition}[section]
\newtheorem{rk}[df]{Remark}
\newtheorem{theorem}[df]{Theorem}
\newtheorem{prop}[df]{Proposition}
\newtheorem{lem}[df]{Lemma}
\newtheorem{cor}[df]{Corollary}
\newtheorem*{term}{Terminology}
\newtheorem*{ack}{Acknowledgements}
\begin{document}

\title{Computing the cohomology of constructible étale sheaves on curves}


\author{Christophe Levrat}



\maketitle


\begin{abstract}
We present an explicit expression for the cohomology complex of a constructible sheaf of abelian groups on the small étale site of an irreducible curve over an algebraically closed field, when the torsion of the sheaf is invertible in the field. This expression only involves finite groups, and is functorial in both the curve and the sheaf. In particular, we explain how to compute the Galois action on this complex. We also present an algorithm which computes this complex and study its complexity. We illustrate this algorithm with several examples.
\end{abstract}
\tableofcontents

\bigskip
\section{Introduction}
~

Let $X_0$ be an algebraic curve over a field $k$. Let $n$ be a positive integer invertible in $k_0$, and $\F_0$ be a constructible sheaf of $\ZZ/n\ZZ$-modules on the (small) étale site of $X_0$. Denote by $k$ a separable closure of $k_0$, by $X$ the base change of $X_0$ to $k$, and by $\F$ the restriction of $\F_0$ to $X$. The étale cohomology complex $\RG(X,\F)$ is equipped with an action of $\Gal(k|k_0)$. Given the curve $X$ and a suitable explicit description of the sheaf $\F$, we are interested in computing a finite extension $k_1$ of $k_0$ and a complex of $(\ZZ/n\ZZ)[\Gal(k_1|k_0)]$-modules which represents $\RG(X,\F)$. 

The computability of étale cohomology groups of torsion sheaves on schemes of finite type over algebraically closed fields was proved in 2014 by Poonen, Testa and van Luijk \cite[Th. 7.9]{poonen_testa} in characteristic zero, and by Madore and Orgogozo in arbitrary characteristic \cite[Th. 0.1]{mo}. However, the algorithms described in these articles are not efficient enough to be used in practice, and the only known result about their complexity is that Madore and Orgogozo's algorithm is primitive recursive \cite[Prop. 4.1.9]{mathese}. In the case of smooth curves, some more efficient algorithms are known.
When $X$ is a smooth projective curve, $\HH^1(X,\mu_n)$ is canonically isomorphic to the $n$-torsion of $\Pic(X)$; two algorithms, one developed by  Huang and Ierardi \cite{huang_counting}, the other by Couveignes \cite{couveignes_linearizing}, compute this group when the base field $k_0$ is finite. Jin's algorithm \cite{jinbi_jin} computes $\HH^1(X,\F)$ where $X$ is a smooth curve and $\F$ is locally constant. 

In this paper, we consider the case where $X_0$ is an integral curve over a field, and $\F_0$ is a constructible sheaf (or even a complex of such sheaves) on $X_0$. We present an explicit expression for the cohomology complex $\RG(X,\F)$ with the aforementioned Galois action, as well as an algorithm which computes this complex (under some classical computability assumptions on the field $k_0$). The latter makes use of an existing algorithm computing $\HH^1(X,\mu_n)$, such as those mentioned above. We also provide complexity bounds for this method. In particular, in the case of locally constant sheaves on smooth projective curves over finite fields, the complexity of computing $\HH^1(X,\F)$ using this algorithm is lower than Jin's.
In the case where the base field $k_0$ is finite, we present an idea that should allow us to reduce the complexity of this algorithm, and explain why this would be a crucial step towards developing a polynomial-time point counting algorithm for surfaces.

In section \ref{sec:cover_curves}, we investigate the properties of the minimal Galois cover of a scheme trivialising the $\ZZ/n\ZZ$-torsors on this scheme, as well as its construction in the case of curves. In section \ref{sec:cover}, we explain how to compute the cohomology of a locally constant sheaf on a scheme of cohomological dimension at most $1$. Section \ref{sec:explicit} contains the proof of the main theorem: an explicit expression of $\RG(X,-)$ when $X$ is a curve over a field of cohomological dimension at most $1$. We then present in section \ref{sec:algorithmic} the algorithms used to compute the cohomology of a constructible sheaf on such a curve, as well as their complexity. We also describe the potential application of our algorithms to point counting on surfaces over finite fields. In sections \ref{sec:examples1} and \ref{sec:examples2}, we illustrate these algorithms in two situations.

\begin{term}
Unless explicitly stated otherwise, the word \textit{cover} denotes a surjective finite étale map. A \textit{Galois cover} is always supposed to be connected. 
\end{term}

\section{The cover trivialising $\ZZ/n\ZZ$-torsors}\label{sec:cover_curves}

\subsection{General construction and properties}\label{subsec:1.1}

Let $n$ be a positive integer. We will denote by $\Lambda$ the ring $\ZZ/n\ZZ$. Given a (discrete) $\Lambda$-module $M$, we will denote by $M^\vee$ its $\Lambda$-dual.

\begin{lem}
\label{lem:sgcar} Let $G$ be a locally compact topological group. Consider the abelian group $\Lambda$, with the trivial action of $G$. Suppose the continuous cohomology group  $\HH^1(G,\Lambda)$ is finite. There is a unique closed normal subgroup $S$ of $G$ such that $G/S$ is isomorphic to the $\Lambda$-dual $\HH^1(G,\Lambda)^\vee$ of $\HH^1(G,\Lambda)$. Moreover, $S$ is a characteristic subgroup of $G$.
\begin{proof} Define $S$ as the closure in $G$ of $G^n[G,G]$. This group $S$ is a characteristic subgroup of $G$ (i.e. stable by all continuous automorphisms) because $G^n$ and $[G,G]$ are stable by automorphisms.
Since $\Lambda$ is an $n$-torsion abelian group, there is a canonical isomorphism \[ \Hom_\cont(G/S,\Lambda)\xrightarrow{\sim}\Hom_\cont(G,\Lambda)=\HH^1(G,\Lambda).\]
Pontryagin duality \cite[Th. 7.63]{hofmann_compact} applied to the locally compact abelian group $G/S$ yields the following isomorphism:
\[ \Hom_\cont(G/S,\Lambda)^\vee\xrightarrow{\sim}G/S.\]
Now let $H$ be a closed normal subgroup of $G$ such that $G/H$ is isomorphic to $\HH^1(X,\Lambda)^\vee$. It necessarily contains $S$ since $G/H$ is a $\Lambda$-module. 
Since $G/S$ and $G/H$ have the same finite cardinality, $H=S$.
\end{proof}
\end{lem}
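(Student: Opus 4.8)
The plan is to write down $S$ explicitly and then verify each clause of the statement in turn. I would take $S$ to be the closure in $G$ of the subgroup $G^n[G,G]$ generated by all $n$-th powers and all commutators. Both $G^n$ and $[G,G]$ are stable under every automorphism of $G$, hence so is the subgroup they generate, and hence so is its closure; this makes $S$ characteristic, and in particular closed and normal. By construction the quotient $G/S$ is abelian and annihilated by $n$, so it is naturally a $\Lambda$-module.

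Next I would pin down the relevant duality. Since $G/S$ is abelian and $n$-torsion, every continuous homomorphism $G\to\Lambda$ kills $G^n[G,G]$ and, being continuous, also kills its closure $S$; so precomposition with $G\to G/S$ yields an isomorphism \[ \Hom_\cont(G/S,\Lambda)\xrightarrow{\ \sim\ }\Hom_\cont(G,\Lambda)=\HH^1(G,\Lambda), \] the last identification being the standard description of degree-one continuous cohomology with trivial coefficients. Moreover, as $G/S$ is killed by $n$, a continuous homomorphism $G/S\to\Lambda$ is the same thing as a continuous homomorphism $G/S\to\QQ/\ZZ$, so $\Hom_\cont(G/S,\Lambda)$ is exactly the Pontryagin dual of $G/S$. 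The hypothesis that $\HH^1(G,\Lambda)$ is finite then says this Pontryagin dual is finite, whence $G/S$ itself is finite (a locally compact abelian group is isomorphic to its double Pontryagin dual, which here is finite), and on finite $n$-torsion groups the $\Lambda$-dual and the Pontryagin dual coincide. Applying Pontryagin duality to the locally compact abelian group $G/S$ now gives \[ G/S\xrightarrow{\ \sim\ }\Hom_\cont(G/S,\Lambda)^\vee\cong\HH^1(G,\Lambda)^\vee, \] which proves existence.

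For uniqueness, suppose $H$ is a closed normal subgroup of $G$ with $G/H\cong\HH^1(G,\Lambda)^\vee$. Then $G/H$ is abelian and killed by $n$, so $G^n[G,G]\subseteq H$, and since $H$ is closed, $S\subseteq H$. The induced surjection $G/S\twoheadrightarrow G/H$ is then a surjection of finite groups of the same cardinality $\lvert\HH^1(G,\Lambda)\rvert$, hence an isomorphism, so $H=S$. The one point that needs care — and the only place the hypothesis is genuinely used — is the passage from "$\HH^1(G,\Lambda)$ finite" to "$G/S$ finite", which is what lets Pontryagin duality be applied in the convenient $\Lambda$-linear form; the rest of the argument is formal.
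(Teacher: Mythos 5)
Your proof is correct and follows the same route as the paper's: take $S$ to be the closure of $G^n[G,G]$, identify $\Hom_\cont(G/S,\Lambda)$ with $\HH^1(G,\Lambda)$, apply Pontryagin duality to $G/S$, and get uniqueness from the containment $S\subseteq H$ plus equality of finite cardinalities. You spell out two points the paper leaves implicit (that the $\Lambda$-dual of the $n$-torsion group $G/S$ is its Pontryagin dual, and that finiteness of $\HH^1(G,\Lambda)$ forces $G/S$ to be finite via double duality), which is a welcome clarification but not a different argument.
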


\begin{cor}
Let $X$ be an integral noetherian scheme such that $\HH^1(X,\Lambda)$ is finite. Up to isomorphism, there is a unique étale Galois cover $\Xnt$ of $X$ with automorphism group isomorphic to $\HH^1(X,\Lambda)^\vee$.
\begin{proof}
This follows immediately from \Cref{lem:sgcar}, the canonical isomorphism \[\HH^1(\pi_1(X),\Lambda)\xrightarrow{\sim}\HH^1(X,\Lambda)\] and Grothendieck-Galois theory.
\end{proof}
\end{cor}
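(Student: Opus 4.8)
The plan is to transport the statement to the étale fundamental group and then invoke Grothendieck--Galois theory, with \Cref{lem:sgcar} doing the group-theoretic work. First I would fix a geometric point $\bar x$ of $X$; since $X$ is integral it is connected, and since it is noetherian it is quasi-compact and quasi-separated, so $G:=\pi_1(X,\bar x)$ is a profinite group, hence compact and in particular locally compact. The canonical comparison map $\HH^1(G,\Lambda)\xrightarrow{\sim}\HH^1(X,\Lambda)$ (an isomorphism for $\HH^1$ with the constant sheaf $\Lambda$, since both sides classify $\Lambda$-torsors) together with the hypothesis that $\HH^1(X,\Lambda)$ is finite shows that $G$ satisfies the hypotheses of \Cref{lem:sgcar}. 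Applying that lemma yields a unique closed normal (indeed characteristic) subgroup $S\trianglelefteq G$ with $G/S\cong\HH^1(X,\Lambda)^\vee$, and this quotient is finite.

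Next I would feed $S$ into the equivalence between the category of finite étale covers of $X$ and that of finite continuous $G$-sets. The finite group $G/S$, viewed as a $G$-set via left translation, is transitive, so the corresponding cover $\Xnt\to X$ is connected; since $S$ is normal, this cover is Galois with $\Aut(\Xnt/X)\cong(G/S)^{\mathrm{op}}$. As $G/S$ is abelian this anti-isomorphism is an isomorphism, so $\Aut(\Xnt/X)\cong\HH^1(X,\Lambda)^\vee$, which gives existence.

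For uniqueness, let $Y\to X$ be any connected étale Galois cover with $\Aut(Y/X)\cong\HH^1(X,\Lambda)^\vee=:A$. Under the equivalence, $Y$ corresponds to an open normal subgroup $H\trianglelefteq G$ with $G/H\cong A$ (again using that $A$ is abelian, so the anti-isomorphism $\Aut(Y/X)\cong(G/H)^{\mathrm{op}}$ is an isomorphism). Thus $H$ is a closed normal subgroup of $G$ with $G/H\cong\HH^1(X,\Lambda)^\vee$, and the uniqueness clause of \Cref{lem:sgcar} forces $H=S$; hence $Y\cong\Xnt$ over $X$.

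The only genuinely delicate point is applying the dictionary correctly: one must make sure that a closed normal subgroup with quotient isomorphic to $\HH^1(X,\Lambda)^\vee$ is the same datum as a connected Galois cover with that automorphism group, despite the inner-conjugation ambiguity in the choice of base point. This is harmless here because $S$ is characteristic — so independent of $\bar x$ up to the canonical identifications — and because $\HH^1(X,\Lambda)^\vee$ is abelian, so that $\Aut(\Xnt/X)$ is canonically $G/S$ rather than merely an opposite group. Everything else is a routine unwinding of definitions.
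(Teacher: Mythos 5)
Your proof is correct and follows exactly the route the paper takes: comparison of $\HH^1(X,\Lambda)$ with $\HH^1(\pi_1(X),\Lambda)$, application of \Cref{lem:sgcar} to the profinite group $\pi_1(X)$, and Grothendieck--Galois theory to translate the unique closed normal subgroup $S$ into the unique Galois cover $\Xnt$. The paper leaves all of this implicit in a one-line proof, so your write-up is simply a careful expansion of the same argument.
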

From now on, given such a scheme $X$, we will always denote by $\Xnt$ a Galois cover of $X$ as in the previous corollary.

\begin{prop}\label{prop:morphH1zero} 
Let $X$ be an integral noetherian scheme such that $\HH^1(X,\Lambda)$ is finite. For any finitely generated $\Lambda$-module $M$, the morphism $\HH^1(X,M)\to \HH^1(\Xnt,M)$ is trivial.
\begin{proof} Recall that $\Xnt$ corresponds to the open subgroup $S$ of $\pi_1(X)$, which is the closure of the subgroup $\pi_1(X)^n[\pi_1(X),\pi_1(X)]$. Since $M$ is $n$-torsion, any map $\pi_1(X)\to M$ is trivial on $S$, hence $\Hom(\pi_1(X),M)\to \Hom(\pi_1(\Xnt),M)$ is trivial.
\end{proof}
\end{prop}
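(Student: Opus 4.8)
The plan is to reduce the statement to an elementary fact about continuous homomorphisms of the étale fundamental group. Fix a geometric point $\bar x$ of $X$ and put $G=\pi_1(X,\bar x)$; as $X$ is integral it is connected, so for a finite abelian group $N$ with trivial $G$-action the comparison isomorphism identifies $\HH^1(X,N)$ with $\HH^1(G,N)=\Hom_\cont(G,N)$, naturally with respect to restriction along covers. Applying \Cref{lem:sgcar} to $G$ (which has $\HH^1(G,\Lambda)\simeq\HH^1(X,\Lambda)$ finite), the cover $\Xnt$ corresponds under Grothendieck--Galois theory to the open normal subgroup $S\trianglelefteq G$ that is the closure of $G^n[G,G]$, and $\pi_1(\Xnt,\bar x)=S$. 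Under these identifications the map $\HH^1(X,M)\to\HH^1(\Xnt,M)$ of the proposition becomes the restriction map $\Hom_\cont(G,M)\to\Hom_\cont(S,M)$, $\varphi\mapsto\varphi|_S$.

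So it suffices to show $\varphi|_S=0$ for every continuous homomorphism $\varphi\colon G\to M$. Here I would use that $M$ is abelian and killed by $n$: any such $\varphi$ vanishes on $[G,G]$ and on $G^n$, hence on the subgroup $G^n[G,G]$ they generate. Since $M$ is discrete, $\ker\varphi$ is open, in particular closed, so $\varphi$ also vanishes on the closure $S$ of $G^n[G,G]$. This is exactly $\varphi|_S=0$. Equivalently, one may simply note that $\varphi$ factors through the maximal abelian exponent-$n$ quotient $G/S$, so its restriction to $\pi_1(\Xnt)=S$ is trivial by construction.

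I expect the only delicate points to be bookkeeping rather than substance. One must invoke the comparison isomorphism $\HH^1(X,M)\simeq\Hom_\cont(\pi_1(X),M)$ in a form functorial in the cover, so that restriction of sheaf cohomology genuinely corresponds to restricting homomorphisms to $S=\pi_1(\Xnt)$; and one should keep track of the fact that $S$ is defined as a \emph{closure}, which is harmless precisely because continuous homomorphisms into a discrete group have open kernel. Beyond that the argument is immediate, and it also makes clear that the hypothesis ``$\HH^1(X,\Lambda)$ finite'' is used only to guarantee that $\Xnt$ is an honest finite cover with $\pi_1(\Xnt)=S$.
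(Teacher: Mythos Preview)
Your proof is correct and follows essentially the same approach as the paper: identify $\Xnt$ with the subgroup $S=\overline{G^n[G,G]}$ of $G=\pi_1(X)$, and observe that any continuous homomorphism from $G$ into an abelian $n$-torsion group vanishes on $S$, so the restriction map $\Hom_\cont(G,M)\to\Hom_\cont(S,M)$ is zero. You have simply made explicit the comparison isomorphism and the closure argument that the paper leaves implicit.
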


\subsection{Explicit construction in the case of curves}

Let $U$ be a smooth integral curve over a field $k$. Denote by $K$ its function field. Let $n$ be a positive integer invertible in $k$. Recall the following description of $\HH^1(U,\mu_n)$ in terms of divisors on $U$.

\begin{lem}\label{lem:H1div} The group $\HH^1(U,\mu_n)$ is canonically isomorphic to the quotient of \[ \{ (D,f)\in \Div(U)\times K^\times\mid nD=\div(f)\} \]
by the subgroup of pairs $(\div(f),f^n)$ with $f\in K^\times$.
\begin{proof} This follows immediately from the corresponding description in terms of invertible sheaves given in \cite[040Q]{stacks}.
\end{proof}
\end{lem}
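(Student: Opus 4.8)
The plan is to reduce the statement to the description of $\HH^1$ of a smooth curve $U$ in terms of the Picard group, which is standard and is the content of Stacks tag \texttt{040Q}. Concretely, the Kummer sequence $1 \to \mu_n \to \GG_m \xrightarrow{n} \GG_m \to 1$ on $U_{\text{ét}}$ gives, upon taking cohomology and using Hilbert 90 (so that $\HH^1(U,\GG_m) = \Pic(U)$), a short exact sequence $0 \to \HH^0(U,\GG_m)/n \to \HH^1(U,\mu_n) \to \Pic(U)[n] \to 0$, i.e.\ $\HH^1(U,\mu_n) \cong \Pic(U)[n]$ up to the unit-group contribution $\OO(U)^\times/(\OO(U)^\times)^n = K^\times/(K^\times)^n$ when $U$ is affine (and more care is needed if $U$ is proper, but the divisor description absorbs this). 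So the first step is simply to invoke \cite[040Q]{stacks} to identify $\HH^1(U,\mu_n)$ with the group of isomorphism classes of pairs $(\L, \alpha)$ where $\L$ is an invertible sheaf on $U$ and $\alpha\colon \L^{\otimes n} \xrightarrow{\sim} \OO_U$ is a trivialization of its $n$-th power.

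Next I would translate this invertible-sheaf description into the divisorial one. Since $U$ is a smooth integral curve, every invertible sheaf is of the form $\OO_U(D)$ for some $D \in \Div(U)$, the assignment $D \mapsto \OO_U(D)$ being surjective with kernel the group of principal divisors $\div(g)$, $g \in K^\times$. Under this correspondence, a trivialization $\alpha\colon \OO_U(D)^{\otimes n} = \OO_U(nD) \xrightarrow{\sim} \OO_U$ is the same datum as a rational function $f \in K^\times$ with $\div(f) = nD$ (the function $f$ being $\alpha$ applied to the canonical rational section). Thus the group of pairs $(\L,\alpha)$ is identified with $\{(D,f) \in \Div(U)\times K^\times \mid nD = \div(f)\}$, \emph{before} quotienting. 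The final step is to pin down which pairs become isomorphic. An isomorphism $\OO_U(D) \xrightarrow{\sim} \OO_U(D')$ is multiplication by some $g \in K^\times$ with $D' = D + \div(g)$; compatibility with the trivializations forces $f' = f g^n$. Hence $(D,f) \sim (D', f')$ iff $(D',f') = (D + \div(g),\, f g^n)$ for some $g \in K^\times$. Quotienting first by the relation $D' = D + \div(g)$ and then by the residual $f \mapsto f g^n$ with $\div(g) = 0$, i.e.\ $g \in \OO(U)^\times$ — or, equivalently and more cleanly, observing that the subgroup generated by all relations $(\div(g), g^n)$ is exactly the subgroup of pairs of the form $(\div(g), g^n)$, whose image under $(D,f) \mapsto f$ modulo the constraint is $(K^\times)^n$ — one recovers precisely the quotient by pairs $(D,f)$ with $f \in (K^\times)^n$ as claimed. (Here one uses that if $f = h^n$ then automatically $\div(f) = n\div(h)$, so $D - \div(h)$ is $n$-torsion-free of the right shape; a small diagram chase closes the identification.)

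The only genuinely delicate point is the last one: matching the two quotients exactly, since a priori the pairs $(D,f)$ carry the relation $(D,f) \sim (D+\div(g), fg^n)$ for \emph{all} $g$, whereas the claimed quotient only kills pairs with trivial divisor part and $f$ an $n$-th power. The reconciliation is that the relation for general $g$ is the composite of ``change of divisor representative'' (which is already built into $\Div(U) \to \Pic(U)$ having kernel the principal divisors, but note $\Div(U)$ appears untouched in the statement) and ``rescaling $f$ by an $n$-th power''. So one should be careful to verify that the subgroup of $\{(D,f)\mid nD = \div(f)\}$ being quotiented out is honestly $\{(\div(g), g^n) \mid g \in K^\times\}$ and that this coincides with $\{(D,f)\mid f \in (K^\times)^n,\ nD = \div(f)\}$ — the inclusion $\supseteq$ requires that $f = g^n$ forces $\div(f) = n\div(g)$, hence $D = \div(g)$ since $\Div(U)$ is torsion-free. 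This torsion-freeness of the divisor group is the small fact that makes everything click, and I expect it to be the one line worth spelling out; the rest is the routine dictionary between Cartier divisors, Weil divisors, and invertible sheaves on a smooth curve.
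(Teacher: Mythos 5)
Your proof is correct and takes essentially the same route as the paper, which simply cites \cite[040Q]{stacks} for the description of $\HH^1(U,\mu_n)$ as pairs $(\L,\alpha)$ and leaves the divisorial translation implicit --- you usefully spell out the one nontrivial point, namely that torsion-freeness of $\Div(U)$ identifies $\{(\div(g),g^n)\}$ with $\{(D,f)\mid f\in (K^\times)^n\}$. The only blemish is the parenthetical claim $\OO(U)^\times/(\OO(U)^\times)^n = K^\times/(K^\times)^n$, which is false (e.g.\ for $U=\A^1$ one side is trivial and the other is not), but it plays no role in your actual argument.
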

We will denote by $[D,f]$ the class of the pair $(D,f)$ in $\HH^1(U,\mu_n)$.
Now suppose that $k$ is separably closed. Let $X$ be the smooth compactification of $U$. Denote by $g$ the genus of $X$. Consider the closed complement $Z=\{ P_1,\dots,P_r\}$ of $U$ in $X$. 
When $r=0$ the lemma above is the usual description of $\HH^1(X,\mu_n)$ as the group of $n$-torsion points on the Jacobian of $X$.
When $r\geqslant 1$, the Gysin sequence \[ 0\to \HH^1(X,\mu_n)\to \HH^1(U,\mu_n)\to \HH^0(Z,\Lambda)\to \Lambda\to 0\]
shows that $\HH^1(U,\mu_n)$ is a free $\Lambda$-module of rank $2g-1+r$. Consider a basis $([D_i,g_i])_{1\leqslant i\leqslant s}$ of $\HH^1(U,\mu_n)$. For every integer $j\in \{1\dots s\}$, we denote by $V_j$ the normalisation of $U$ in $K(\sqrt[n]{g_1},\dots,\sqrt[n]{g_j})$, by $\phi_j$ the induced cover $U_j\to U$, and set $V=V_s$.

\begin{prop} The cover $\phi\colon V\to U$ is isomorphic to $\Unt\to U$.
\begin{proof} First of all, each cover $V_i\to V_{i-1}$ is étale, because it is constructed by taking an $n^{\text{th}}$ root of a function whose valuation at each point is a multiple of $n$. Let us check by induction on $j \in \{1,\dots,r\}$ that $V_j\to V_{j-1}$ is Galois with group $\mu_n(k)$. This is obviously true for $j=1$.
For any $i\in\{ 1,\dots,j-1\}$ the extension $V_i=V_{i-1}(\sqrt[n]{g_i})$ of $V_{i-1}$ is Galois with group $\mu_n(k)$ by the induction hypothesis. 
The Hochschild-Serre spectral sequence yields an exact sequence \[ 0 \to\HH^1(\mu_n(k),\Lambda) \to \HH^1(V_{i-1},\Lambda) \to \HH^1(V_i,\Lambda).\] 
Therefore, the kernel of $\phi_i^\star \colon \HH^1(U,\Lambda)\to \HH^1(V_i,\Lambda)$ is the direct sum $\Lambda [D_1,g_1]\oplus \dots \oplus\Lambda [D_i,g_i]\simeq \Lambda^i$, and $[D_j,g_j]$ is not in the kernel; the order of $\phi_i^\star [D_j,g_j]$ in $\HH^1(V_i,\mu_n)$ is still $n$. 
Thus, $V\to U$ is finite étale of order $n^{r}$. The field $k$ being separably closed, the extension $k(V)$ of $k(U)$ is the splitting field of the polynomials $T^n-g_1,\dots,T^n-g_r$, so it is Galois. The morphism $V\to U$ is therefore an étale Galois cover. An element of the group $\Aut(V|U)$ is an automorphism defined by $(\sqrt[n]{g_1}\mapsto \zeta_1\sqrt[n]{g_1},\dots, \sqrt[n]{g_{r}}\mapsto \zeta_{r}\sqrt[n]{g_{r}})$, where the $\zeta_i$ are $n^{\text{th}}$ roots of unity in $k$; the group $\Aut(V|U)$ is therefore canonically isomorphic to $\Hom_\Lambda(\HH^1(U,\mu_n),\mu_n)=\HH^1(U,\Lambda)^\vee$.  \Cref{lem:sgcar} now ensures that $V$ is isomorphic to $\Unt$.
\end{proof}
\end{prop}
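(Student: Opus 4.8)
The plan is to check directly that $\phi\colon V\to U$ is a connected étale Galois cover with automorphism group canonically isomorphic to $\HH^1(U,\Lambda)^\vee$, and then to conclude by the uniqueness of $\Unt$ furnished by \Cref{lem:sgcar}.

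First I would verify that $\phi$ is a connected étale cover. Since $nD_i=\div(g_i)$, the valuation of $g_i$ at every closed point of $U$ is divisible by $n$; as $k$ is separably closed and $n$ is invertible in $k$, the $n$-th roots of unity $\mu_n(k)$ lie in $k$, so adjoining $\sqrt[n]{g_i}$ to a function field produces an extension unramified over each point of $U$. Arguing by induction on $j\in\{1,\dots,s\}$ — knowing $V_{j-1}\to U$ is étale, hence unramified, so that $g_j$ still has divisible valuations on $V_{j-1}$ — one gets that each $V_j\to V_{j-1}$ is étale, whence $\phi$ is étale. Finally $k(V)=K(\sqrt[n]{g_1},\dots,\sqrt[n]{g_s})$ is a field, so $V$ is integral, in particular connected.

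Next I would show that $\phi$ is Galois with group $\HH^1(U,\Lambda)^\vee$. Because $\mu_n(k)\subseteq k$, the extension $k(V)/K$ is the splitting field of the separable polynomials $T^n-g_i$, hence Galois, and Kummer theory gives
\[\Gal(k(V)/K)\cong\Hom(\Gamma,\mu_n(k)),\]
where $\Gamma\subseteq K^\times/(K^\times)^n$ is generated by the classes of $g_1,\dots,g_s$. The restriction map $\HH^1(U,\mu_n)\to\HH^1(\Spec K,\mu_n)=K^\times/(K^\times)^n$ sends $[D,f]$ to the class of $f$, and it is injective: a $\mu_n$-torsor on $U$ that is trivial over $K$ admits a rational section, whose closure is a connected component of the (finite étale) torsor mapping isomorphically onto the normal curve $U$, so the torsor is already trivial on $U$ (equivalently, this injectivity is the start of the localization sequence for $\Spec K\hookrightarrow U$). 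Hence the classes of $g_1,\dots,g_s$, being the images of a $\Lambda$-basis of $\HH^1(U,\mu_n)$, are independent in $K^\times/(K^\times)^n$; thus $\Gamma\cong\HH^1(U,\mu_n)$, $[k(V):K]=n^s$, and $\phi$ is an étale Galois cover with $\Aut(V|U)\cong\Hom_\Lambda(\HH^1(U,\mu_n),\mu_n(k))$. Since $\HH^1(U,\mu_n)\cong\HH^1(U,\Lambda)\otimes_\Lambda\mu_n(k)$ canonically and $\mu_n(k)$ is free of rank one over $\Lambda$, this last group is canonically $\Hom_\Lambda(\HH^1(U,\Lambda),\Lambda)=\HH^1(U,\Lambda)^\vee$.

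Finally, I would observe that $\phi$ corresponds to a closed normal subgroup of $\pi_1(U)$ with quotient $\Aut(V|U)\cong\HH^1(U,\Lambda)^\vee\cong\HH^1(\pi_1(U),\Lambda)^\vee$; since $\HH^1(U,\Lambda)$ is finite, \Cref{lem:sgcar} forces this subgroup to be the characteristic subgroup $S$, whose associated cover is $\Unt$ by definition, so the cover $\phi\colon V\to U$ is isomorphic to $\Unt\to U$. I expect the middle step to be the main obstacle: one must show that each adjunction of an $n$-th root genuinely multiplies the degree by $n$ — equivalently, that the $g_i$ remain independent modulo $n$-th powers — and the cleanest route to this is the injectivity of $\HH^1(U,\mu_n)\hookrightarrow\HH^1(\Spec K,\mu_n)$; the rest is bookkeeping with Kummer theory and Grothendieck--Galois duality.
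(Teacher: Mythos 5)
Your proposal is correct and reaches the same conclusion, but it replaces the paper's key step by a genuinely different argument. The paper climbs the tower $U=V_0\subset V_1\subset\dots\subset V_s=V$ by induction, using the Hochschild--Serre exact sequence at each stage to show that the kernel of $\HH^1(U,\Lambda)\to\HH^1(V_i,\Lambda)$ is exactly $\Lambda[D_1,g_1]\oplus\dots\oplus\Lambda[D_i,g_i]$, so that $[D_{i+1},g_{i+1}]$ still has order $n$ on $V_i$ and the next step is a nontrivial degree-$n$ Kummer extension. You instead prove the independence of the $g_i$ modulo $n$-th powers in one stroke, via the injectivity of the restriction $\HH^1(U,\mu_n)\to\HH^1(\Spec K,\mu_n)=K^\times/(K^\times)^n$, and then read off $[k(V):K]=n^s$ and the Galois group from Kummer theory over $K$. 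Both are valid; your route is arguably cleaner and isolates the real content (independence of the $g_i$ in $K^\times/(K^\times)^n$), and the injectivity you invoke is in fact immediate from the paper's own \Cref{lem:H1div}: if $[D,f]$ maps to a trivial class then $f\in(K^\times)^n$, which is precisely the condition for $(D,f)$ to lie in the subgroup being quotiented out, so no geometric argument about sections of torsors is even needed. What the paper's inductive Hochschild--Serre argument buys in exchange is explicit control of each intermediate cover $V_i$ and of the kernel filtration of $\phi_i^\star$, which is closer in spirit to the computations carried out in the later sections. Your final identifications $\Aut(V|U)\cong\Hom_\Lambda(\HH^1(U,\mu_n),\mu_n(k))\cong\HH^1(U,\Lambda)^\vee$ and the appeal to the uniqueness statement of \Cref{lem:sgcar} coincide with the paper's conclusion.
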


\begin{rk} We will often consider the following situation. Let $V\to U$ be an étale Galois cover of smooth integral curves over $k$. The cover $\Vnt\to U$ is still Galois because $\Vnt\to V$ is characteristic (i.e. if $V\to S$ is a Galois cover, $V\nt\to S$ is Galois as well). Sometimes, we will need to compute a subcover $V'\to V$ of $\Vnt$ by taking $n^{\text{th}}$ roots of functions $g_1,\dots,g_t$ generating a submodule of $\HH^1(V,\mu_n)$. In that case, $V'\to U$ is still Galois if and only if the submodule generated by $g_1,\dots,g_t$ is stable under the action of $\Aut(V|U)$. This is the case for instance if we take the submodule of elements of $\HH^1(V,\mu_n)$ defined over some subfield of $k$ over which the elements of $\Aut(V|U)$ are also defined.
\end{rk}

\subsection{Ramification at infinity}

Let $U$ be an integral affine curve over an algebraically closed field $k$. Denote by $X$ the smooth compactification of $U$. Let $n$ be an integer invertible in $k$, and denote by $\Lambda$ the ring $\ZZ/n\ZZ$. Let us study the ramification at infinity of $\Unt\to U$, i.e. the ramification of the smooth compactification $X'$ of $\Unt$ above the points $P_0,\dots,P_r$ of $X-U$. The map $\HH^2_Z(X,\mu_n)\to \HH^2(X,\mu_n)$ can be expressed, using the Gysin isomorphism $\HH^0(Z,\Lambda)\to \HH^2_Z(X,\mu_n)$ and the isomorphism $\HH^2(X,\mu_n)\to \Lambda$, as the sum map $\HH^0(Z,\Lambda)\to \Lambda$. A basis of its kernel is given by $(P_1-P_0,\dots,P_r-P_0)$. Consider functions $g_1,\dots,g_r\in k(X)$ such that \[\div(g_i)=nD_i+(P_i-P_0)\] where $D_i\in\Div^0(X-\{P_0,\dots,P_r\})$. The cover $X'\to\Xnt$ corresponds to the function field extension \[k(\Xnt)(\sqrt[n]{g_1},\dots,\sqrt[n]{g_r})\]
of $k(\Xnt)$. Let $i\in \{1\dots r\}$. The extension $k(\Xnt)(\sqrt[n]{g_j},j\neq i)$ of $k(\Xnt)$ yields a cover $Y_i\to \Xnt$ which is unramified above $P_i$ because $v_{P_i}(g_j)=0$. The cover $X'\to Y_i$, however, is ramified at $P_i$; let $Q_i$ be a preimage of $P_i$ in $Y_i$. Since $v_{Q_i}(g_i)=1$, the fibre $X'_{Q_i}$ is isomorphic to $k[x]/(x^n)$, and the ramification index of $X'\to Y_i$ at any preimage of $Q_i$ is $n$. Above $P_i$, there are exactly $\frac{1}{n}|\HH^1(U,\Lambda)|$ points of $X'$, each of with ramification index $n$. Let $R_i$ be a preimage of $Q_i$ in $X'$.
The inertia subgroup $I_{R_{i}|P_i}\subset \Aut(\Unt|U)$ of $R_i$ fits in the short exact sequence
\[ 0\to I_{R_{i}|Q_i} \to I_{R_{i}|P_i} \to I_{Q_i|P_i} \to 0\]
(see \cite[0BU7]{stacks}). As $I_{Q_i|P_i}=0$, there are isomorphisms \[I_{R_i|P_i}=I_{R_i|Q_i}=\Aut(X'|Y_i)\simeq\Lambda.\]
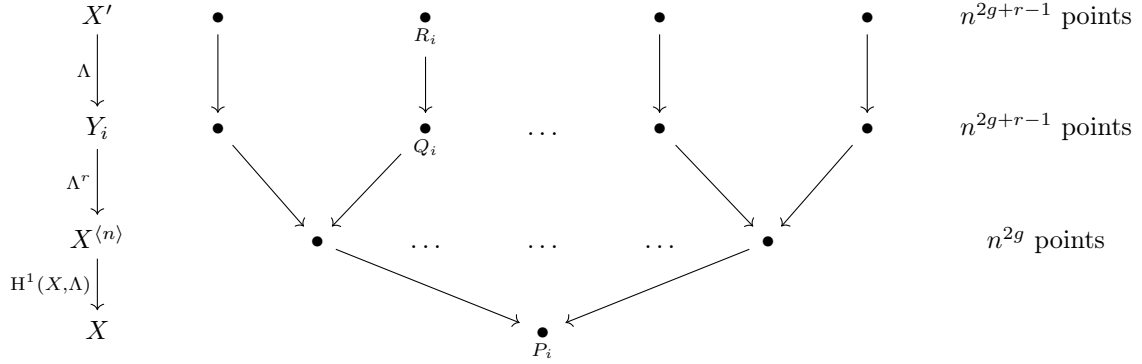
\begin{figure}[H]\label{fig:ramification}
\[\adjustbox{max width=\textwidth}{$
\begin{tikzcd}
X' \arrow[d,"\Lambda",swap] & \bullet \arrow[d]& &\underset{R_i}\bullet\arrow[d] & & \bullet\arrow[d] & & \bullet\arrow[d] & n^{2g+r-1}\text{ points} \\
Y_i\arrow[d,"\Lambda^r",swap] & \bullet\arrow[dr]& &\underset{Q_i}\bullet\arrow[dl] & \dots & \bullet\arrow[dr] & & \bullet\arrow[dl] & n^{2g+r-1}\text{ points}\\
\Xnt\arrow[d,"{\HH^1(X,\Lambda)}",swap] & &  \bullet\arrow[drr] & \dots & \dots & \dots & \bullet\arrow[dll] & &n^{2g}\text{ points}  \\
X & & & & \underset{P_i}{\bullet}  & & & & 
\end{tikzcd}
$}\]
\caption{Ramification at infinity of the cover $\Unt\to U$}
\end{figure}
As a subgroup of $\Aut(\Unt|U)$, the group $I_{R_i|P_i}$ is generated by $\sqrt[n]{g_i}\mapsto \zeta\sqrt[n]{g_i}$, where $\zeta$ is a primitive $n^{\text{th}}$ root of unity in $k$. The results regarding preimages and ramification indices also apply to $P_0$, which had been chosen arbitrarily. With the above notation, the subgroup $I_{R_0|P_0}$ is generated by the automorphism \[(\sqrt[n]{g_1},\dots,\sqrt[n]{g_r})\mapsto (\zeta\sqrt[n]{g_1},\dots,\zeta\sqrt[n]{g_r}).\]

\subsection{Cohomology of the ramification groups}\label{subsec:ramgrp}
Let $n$ be a positive integer. Denote by $\Lambda$ the ring $\ZZ/n\ZZ$.
Let us consider an étale Galois cover $V\to U$ of smooth integral curves over an algebraically closed field $k$ of characteristic prime to $n$. Let $K$ be the function field of $U$. Denote by $X$ (resp. $Y$) the smooth compactification of $U$ (resp. $V$). Set $G=\Aut(V|U)$. 
Let $x$ be a closed point of $X-U$. Consider an inertia group $I\subset\Gal(\Ksep|K)$ at $x$, and one of its finite quotients $I_y\subset G$, which is the stabiliser in $G$ of a closed point $y\in Y-V$ mapping to $x$. Denote by $P\triangleleft I$ and $P_y\triangleleft I_y$ the wild inertia subgroups. There are canonical isomorphisms \cite[09EE]{stacks}
\[  I/P\xrightarrow{\sim}\lim_{m\nmid p}\mu_m(k)~~\text{and}~~ I_y/P_y\xrightarrow{\sim}\mu_e(k)\] where $e$ is the ramification index of $Y\to X$ at $y$. From now on, we assume that $n$ divides $e$. Let $M$ be a $\Lambda[G]$-module.



\begin{prop}\label{cor:cohI} In the situation described above, the canonical maps
\[ \tau_{\leqslant 1}\RG(I_y/P_y,M^{P_y})\to\RG(I/P,M^P)\to\RG(I,M)
\]
are quasi-isomorphisms. 
\begin{proof}
Let $\sigma$ denote a pro-generator of $I/P$, and $\sigma_y$ its image in $I_y/P_y$. The actions of $\sigma$ and $\sigma_y$ on $M$ are equal. Set $N_y=\sum_{i=1}^e\sigma_y^i$. The usual results on the cohomology of (pro)-cyclic groups \cite[Calc. 6.2.1]{weibel} \cite[Prop. 8.1.4]{fulei} show that the map on the left hand side is: 
\[
\begin{tikzcd}
\RG(I/P,M^P) &\arrow[r,equal]&~  &M^P \arrow[r,"\sigma-\id"] & M^P \arrow[r] & 0 \\
\tau_{\leqslant 1}\RG(I_y/P_y,M^{P_y})\arrow[u]&\arrow[r,equal]&~  &M^P \arrow[r,"\sigma_y-\id"]\arrow[u,equal] & \ker(N_y)\arrow[r] \arrow[u,hookrightarrow] & 0\arrow[u]
\end{tikzcd}
\]
Since $n$ divides $e$, the action of $N_y$ on $M$ is trivial and $\ker(N_y)=M^P$, which shows that the map above is a quasi-isomorphism. The fact that the map on the right hand side is a quasi-isomorphism is shown in \cite[Prop. 8.1.4]{fulei} as well.
\end{proof}
\end{prop}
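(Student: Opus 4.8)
The plan is to split the composite into its two constituent maps and reduce each to an explicit computation with the cohomology of (pro-)cyclic groups, in the spirit of \cite[Prop. 8.1.4]{fulei}.

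First I would handle the right-hand map $\RG(I/P,M^P)\to\RG(I,M)$ via the Hochschild--Serre spectral sequence for $1\to P\to I\to I/P\to 1$. Since $M$ is a discrete $n$-torsion module, $p\nmid n$, and the action of $P$ on $M$ factors through the finite $p$-group $P_y$, one has $\HH^q(P,M)=0$ for all $q>0$ (a filtered colimit of cohomologies of finite $p$-groups with coefficients of order prime to $p$) and $\HH^0(P,M)=M^P$; the spectral sequence collapses and its edge map is the asserted quasi-isomorphism.

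For the left-hand map, note first that $M^{P_y}=M^P$, since $P$ acts on $M$ through its image $P_y$. Fix a topological generator $\sigma$ of $I/P\cong\varprojlim_{m\nmid p}\mu_m(k)$ and let $\sigma_y$ be its image in $I_y/P_y\cong\mu_e(k)$, a generator of that cyclic group of order $e$; the $I/P$-action on $M^P$ factors through $I_y/P_y$, and $\sigma$ and $\sigma_y$ act by the same automorphism. As $\varprojlim_{m\nmid p}\mu_m(k)$ has cohomology concentrated in degrees $0$ and $1$ on $n$-torsion modules, $\RG(I/P,M^P)$ is represented by the complex $[M^P\xrightarrow{\sigma-1}M^P]$ in degrees $0,1$; and the standard $2$-periodic resolution of the trivial module over the finite cyclic group $I_y/P_y$ represents $\RG(I_y/P_y,M^{P_y})$ by $[M^P\xrightarrow{\sigma_y-1}M^P\xrightarrow{N}M^P\xrightarrow{\sigma_y-1}\cdots]$, with $N=\sum_{i=0}^{e-1}\sigma_y^i$, so that $\tau_{\leqslant 1}$ of it may be taken to be $[M^P\xrightarrow{\sigma_y-1}\ker(N)]$. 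The comparison map between these two complexes is then the identity in degree $0$ and the inclusion $\ker(N)\hookrightarrow M^P$ in degree $1$.

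It remains --- and this is the crux, the step that uses $n\mid e$ in an essential way --- to show that the norm element $N=\sum_{i=0}^{e-1}\sigma_y^i$ annihilates $M^P$; once this is known, $\ker(N)=M^P$ and the comparison map above is a quasi-isomorphism. I expect this to come down to writing $e$ as a multiple of $n$ and exploiting that $M^P$ is $n$-torsion, and I anticipate that the precise bookkeeping here --- relating the order with which $\sigma_y$ acts on $M^P$ to both $n$ and $e$ --- is the main obstacle. Granting it, comparing $\HH^0$ and $\HH^1$ of the two complexes completes the left-hand quasi-isomorphism, and composing with the first part yields the chain of quasi-isomorphisms in the statement.
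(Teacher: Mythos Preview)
Your approach is the paper's: for the right-hand map you use Hochschild--Serre for $P\triangleleft I$ (the paper just cites \cite[Prop.~8.1.4]{fulei}, which amounts to the same thing), and for the left-hand map you write both sides as explicit two-term complexes for (pro\nobreakdash-)cyclic groups, so that the comparison becomes the identity in degree $0$ and the inclusion $\ker(N_y)\hookrightarrow M^P$ in degree $1$. Everything then hinges on the norm $N_y=\sum_{i=0}^{e-1}\sigma_y^i$ annihilating $M^P$. The paper dispatches this in one line from $n\mid e$; you flag it as the crux and the main obstacle. Your caution is warranted.

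In fact $n\mid e$ together with $M$ being $n$-torsion does \emph{not} force $N_y|_{M^P}=0$. Take $n=e=2$, $P_y=1$, $I_y/P_y=\ZZ/2\ZZ$, and $M=\FF_2^{\,2}$ with the generator $\sigma_y$ swapping the two coordinates (geometrically: $V=\GG_m\xrightarrow{t\mapsto t^2}\GG_m=U$, with $G=I_y=\ZZ/2\ZZ$). Then $N_y(a,b)=(a+b,a+b)$, so $\ker N_y=\{(a,a)\}=\im(\sigma_y-1)$ and $\HH^1(I_y/P_y,M)=0$, whereas $\HH^1(I/P,M)=M/(\sigma-1)M\cong\FF_2$; the left-hand map is not a quasi-isomorphism here. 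What rescues the argument in the situations where the proposition is actually applied --- the covers $V=W^{\langle n\rangle}$ feeding into Theorem~\ref{th:RGamma} --- is the extra feature that the unique order-$n$ subgroup of $I_y/P_y$, namely the inertia of $V\to W$, acts trivially on $M$ (since $\LL|_W$ is already constant). Then $\sigma_y^{e/n}$ fixes $M$, the $e$ summands of $N_y$ fall into $n$ identical blocks, and $N_y|_M=n\cdot\bigl(\sum_{r=0}^{e/n-1}\sigma_y^r\bigr)=0$ because $nM=0$. So the ``bookkeeping'' you anticipate does go through, but only with this additional input about the action; the bare hypothesis $n\mid e$ is not enough.
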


The following lemma shows how to explicitly construct the inverse of the map on the left hand side as a morphism of complexes; it will be used in \Cref{th:RGamma}. We denote by $\Homcr$ the groups of crossed homomorphisms: given a group $G$ and a $G$-module $M$, $\Homcr(G,M)$ is the group of maps $f\colon G\to M$ such that for all $g,h\in G$, $f(gh)=f(g)+g\cdot f(h)$.

\begin{lem}\label{lem:secIP} The canonical map $\Homcr(I_y/P_y,M^{P_y})\to \Homcr(I_y,M)$ has a section. 
\begin{proof} Let $u\colon I_y\to M$ be a crossed homomorphism.
Consider the commutative diagram\[
\begin{tikzcd}
I_y \arrow[d]\arrow[r,"u"] & M \arrow[r,"q"] & M_{P_y} \\
I_y/P_y \arrow[r,dashed] & M^{P_y}\arrow[ur,"\alpha"]\arrow[u] & 
\end{tikzcd}
\]
and set $f=q\circ u$. For all $x\in P_y$ and $g\in I_y$, the definition of $M_{P_y}$ ensures that $f(x g)=f(x)+q(x\cdot u(g))=f(x)+f(g)$. Hence, for any $x\in P_y$, $f(x^{|P_y|})=|P_y|f(x)$ is zero ; since multiplication by $|P_y|$ is an automorphism of $M$, this means that $f(x)=0$. Therefore, there is a quotient map $\bar f\colon I_y/P_y\to M_{P_y}$. Set $\bar u=\alpha^{-1}\circ \bar{f} \colon I_y/P_y\to M^{P_y}$. The map $u\mapsto\bar u$ is clearly linear. Moreover, $\bar u$ is still a crossed homomorphism since $\bar{u}(\bar g_1\bar g_2)=\alpha^{-1}f(g_1g_2)=\alpha^{-1}f(g_1)+q(g_1\cdot \alpha^{-1}u(g_2))$. The $I_y$-linearity of $\alpha^{-1}q$ concludes.
\end{proof}
\end{lem}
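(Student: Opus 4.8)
The plan is to construct the section explicitly at the level of crossed homomorphisms (i.e.\ $1$-cocycles), the point being that $P_y$ is the \emph{wild} inertia: its order is a power of the residue characteristic $p$, hence prime to $n$ and invertible in $\Lambda=\ZZ/n\ZZ$. Consequently $|P_y|$ acts invertibly on every $\Lambda[G]$-module $M$, and the averaging operator $e=\frac{1}{|P_y|}\sum_{x\in P_y}x$ is a well-defined endomorphism of $M$.

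The one structural input I would isolate first is that the composite $\alpha\colon M^{P_y}\hookrightarrow M\twoheadrightarrow M_{P_y}$ (the $q$ and $\alpha$ of the diagram) is an isomorphism of $\Lambda[I_y/P_y]$-modules. Indeed, since $P_y$ is normal in $I_y$, the operator $e$ commutes with the $I_y$-action; it is the identity on $M^{P_y}$, and a short computation shows $\ker e=\ker q$ (the submodule generated by the $xm-m$), so $M=M^{P_y}\oplus\ker q$ and $\alpha$ is the resulting identification of complementary summands. Equivalently, the cohomology of $P_y$ with coefficients in $M$ vanishes in positive degrees, but I only need the degree-$0/1$ consequence encoded in $\alpha$.

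With $\alpha$ in hand, given a crossed homomorphism $u\colon I_y\to M$, I set $f=q\circ u\colon I_y\to M_{P_y}$. Because $P_y$ acts trivially on $M_{P_y}$, the cocycle identity gives $f(xg)=f(x)+f(g)$ for $x\in P_y$, $g\in I_y$; specializing to $g=x$ and iterating yields $|P_y|f(x)=f\big(x^{|P_y|}\big)=f(1)=0$ (using $x^{|P_y|}=1$), so $f$ vanishes on $P_y$, and then $f(gx)=f(g)+g\cdot f(x)=f(g)$ shows $f$ is constant on cosets of $P_y$. Hence $f$ descends to a crossed homomorphism $\bar f\colon I_y/P_y\to M_{P_y}$, and I define the candidate section by $u\mapsto\bar u:=\alpha^{-1}\circ\bar f\colon I_y/P_y\to M^{P_y}$; this is $\Lambda$-linear in $u$, and the $\Lambda[I_y/P_y]$-linearity of $\alpha^{-1}$ converts the cocycle identity of $\bar f$ into one for $\bar u$.

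Finally I would verify the section property: if $u=\iota\circ v\circ\pi$ is the image under the canonical map of $v\in\Homcr(I_y/P_y,M^{P_y})$, with $\pi\colon I_y\to I_y/P_y$ and $\iota\colon M^{P_y}\hookrightarrow M$, then $f=q\circ\iota\circ v\circ\pi=\alpha\circ v\circ\pi$, whence $\bar f=\alpha\circ v$ and $\bar u=\alpha^{-1}\bar f=v$. I expect the only real obstacle to be pinning down the isomorphism $\alpha$ and its equivariance precisely; after that everything reduces to bookkeeping with the cocycle identity. A softer alternative would deduce $\HH^1(I_y/P_y,M^{P_y})\xrightarrow{\sim}\HH^1(I_y,M)$ from inflation--restriction together with $\HH^1(P_y,M)=0$, but that only yields an isomorphism on cohomology classes, whereas the application in \Cref{th:RGamma} needs this chain-level splitting, so the explicit construction above is the one I would carry out.
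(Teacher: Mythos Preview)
Your proof is correct and follows essentially the same route as the paper: compose with the projection to coinvariants $q\colon M\to M_{P_y}$, use that $|P_y|$ is invertible in $\Lambda$ to show $f=q\circ u$ vanishes on $P_y$, descend to $I_y/P_y$, and pull back via the isomorphism $\alpha\colon M^{P_y}\xrightarrow{\sim} M_{P_y}$. You supply a couple of details the paper leaves implicit---the reason $\alpha$ is an isomorphism, and the explicit verification that $u\mapsto\bar u$ is indeed a section---but the construction is identical.
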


\subsection{A similar cover with Galois action}\label{subsec:X2gal}

Let $k_0$ be a perfect field, and $k$ be an algebraic closure of $k_0$. Let $n$ be an integer invertible in $k$. Denote by $\Lambda$ the ring $\ZZ/n\ZZ$. Let $V_0$ be a geometrically integral smooth curve over $k_0$. As usual, the base change $-\times_{k_0}k$ will be denoted by removing the subscript $_0$. We wish to compute a (connected) characteristic cover $V'_0$ of $V_0$ such that the map $\HH^1(V,\Lambda)\to \HH^1(V',\Lambda)$ is trivial. In the case where $V$ is connected and the elements of $\HH^1(V,\mu_n)$ are defined over $k_0$, the cover $\Vnt$ constructed in the previous sections comes from $k_0$, and we are done. However, this is not the case in general. The construction below is a refinement of the simple idea of computing the Galois orbit of $\Vnt$.

\paragraph{Construction of $V'_0$} The function field of $Y_0$ is of the form $k_0(x)[y]/(f)$, with $f\in k(x)[y]$. Denote by $k_1$ the algebraic closure of $k_0$ in $k_0(x)[y]/(f)$; since $k_0$ is perfect, $k_1$ is a separable extension. Let $V_1$ be a connected component of $V$. Let $([D_1,g_1],\dots,[D_r,g_r])$ be a basis of $\HH^1(V_1,\mu_n)$ as in \Cref{lem:H1div}. Denote by $k_2$ the minimal extension of $k_0$ over which the $g_i$ are defined. Let $L$ be the Galois closure of the extension of $k_0$ generated by $k_1,k_2$ and $\mu_n(k)$. Let $\alpha$ be a primitive element of the extension $L|k_1$, and $m\in k_1[t]$ its minimal polynomial. For $i\in\{1\dots r\}$, write $g_i=g_i'(\alpha,x,y)$ with $g'_i\in k_0(x)[t,y]/(m(t),f(x,y))$. Let $V'_0\to V_0$ be the normalisation of $V_0$ in the function field $k_0(\alpha,x,y)(\sqrt[n]{g'_1},\dots,\sqrt[n]{g'_r})$. The curve $V'$ is isomorphic to the $\Gal(L|k_0)$-orbit of $V_1\nt$; it has $[L:k_0]$ connected components, and the map $\HH^1(V,\Lambda)\to \HH^1(V',\Lambda)$ is trivial.

Note that since the degree of $V'_0\to (V_0\times_{k_1}L)$ is $n^r$ and that of $V_0\times_{k_1}L\to V_0$ is $[L:k_1]$, the degree of $V'_0\to V_0$ is $n^r[L:k_1]$.

\begin{prop}The cover $V'_0\to V_0$ is characteristic, i.e. if $V_0$ is an étale Galois cover of a curve $U_0$, then $V_0'\to U_0$ is still Galois.
\begin{proof}
Consider the situation where $V_0$ is an étale Galois cover of a smooth integral $k_0$-curve $U_0$. Here is how to explicitly compute the automorphism group of $V_0\to U_0$. 
The map $V'_0\to V_0$ has degree $n^r[L:k_1]$. Using the notations above, set $z_i=\sqrt[n]{g'_i}$. The elements of $\Aut(k_0(V'_0)|k_0(V_0))$ are defined by $t\mapsto \sigma(t),z_i\mapsto \zeta_i z_i$ where $\sigma\in \Gal(L|k_1)$ and $\zeta_i\in\mu_n(L)$. There are $\deg(V'_0\to V_0)=n^{r}[L:k_1]$ such automorphisms since $\mu_n(k)\subset L$, hence the cover $V'_{0}\to V_0$ is Galois.  
Let us now compute the automorphism group of $V'_0\to U_0$. Its elements are defined by\[ (t,x,y,z_1,\dots,z_r)\mapsto( \sigma(t),x',y',z'_1,\dots,z'_r) \]
where $\sigma\in \Gal(L|k_0)$, the pair $(x',y')$ is the image of $(x,y)$ under a $U_0$-automorphism of $V_0$ whose image in $\Gal(k_1|k_0)$ is the same as that of $\sigma$, and the elements $z'_i\in k_0(V'_0)$ satisfy ${z'_i}^n=\phi(g'_i)$. 
As expected, there are $\deg(V'_0|U_0)=n^r[L:k_1]\deg(V_0\to U_0)$ elements in $\Aut(V'_0| U_0)$, and $V'_0\to U_0$ is Galois. 
\end{proof}
\end{prop}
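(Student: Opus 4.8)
The plan is to reduce the statement to the Galois-ness of the function field extension $k_0(V'_0)/k_0(U_0)$. Since $V'_0$ is by construction the normalisation of the integral curve $V_0$ in the field $k_0(\alpha,x,y)(\sqrt[n]{g'_1},\dots,\sqrt[n]{g'_r})$ it is connected, and since $V_0\to U_0$ is finite étale while $V'_0\to V_0$ is a separable constant-field extension followed by Kummer extensions attached to functions whose valuations are everywhere divisible by $n$, the morphism $V'_0\to U_0$ is finite étale; hence it is a Galois cover if and only if $k_0(V'_0)/k_0(U_0)$ is a Galois field extension, that is, normal and separable. Separability is immediate, since $k_0$ is perfect, $n$ is invertible in $k_0$, and $V_0\to U_0$ is étale. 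So the real content is normality, which I would establish along the tower
\[ k_0(U_0)\subseteq k_0(V_0)\subseteq E:=k_0(V_0)(\alpha)=k_0(V_0)\cdot L\subseteq k_0(V'_0),\]
recalling that $\alpha$ generates $L$ over the field of constants $k_1\subseteq k_0(V_0)$, so that $[E:k_0(V_0)]=[L:k_1]$ (as $k_1$ is algebraically closed in $k_0(V_0)$ and $L/k_1$ is normal).

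First I would dispose of the two lower layers. Let $\iota$ be an arbitrary embedding of $k_0(V'_0)$ over $k_0(U_0)$ into a fixed algebraic closure of $k_0(U_0)$. As $V_0\to U_0$ is Galois, $k_0(V_0)/k_0(U_0)$ is normal, so $\iota$ restricts to an automorphism of $k_0(V_0)$; as $L/k_0$ is normal — it was chosen to be the Galois closure over $k_0$ of the compositum of $k_1$, $k_2$ and $\mu_n(k)$ — and $\alpha$ is algebraic over $k_0$, $\iota$ maps $L$ into, hence onto, $L$. Therefore $\iota$ preserves $E=k_0(V_0)\cdot L$ setwise and induces an element $\tilde h\in\Aut(E/k_0(U_0))$; in particular $E/k_0(U_0)$ is normal.

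For the top layer I would use that $\mu_n(k)\subseteq L\subseteq E$, so that $k_0(V'_0)=E(\sqrt[n]{g'_1},\dots,\sqrt[n]{g'_r})$ is a Kummer extension of $E$ associated with the finite subgroup $\Delta\subseteq E^\times/(E^\times)^n$ generated by the classes of the $g'_i$. Kummer theory gives $\iota(k_0(V'_0))=E(\sqrt[n]{\tilde h(\Delta)})$, so normality of $k_0(V'_0)/k_0(U_0)$ is exactly the statement that every $\tilde h\in\Aut(E/k_0(U_0))$ stabilises $\Delta$. This is the crux of the argument and the step I expect to be the main obstacle. I would prove it by base change to $k$: the ring $E\otimes_{k_0}k$ is a product of function fields of the geometric connected components of $V$, each occurring with multiplicity $[L:k_1]$, and under this decomposition $\Delta$ corresponds, on the factor attached to a component $W$, to the image in $k(W)^\times/(k(W)^\times)^n$ of $\HH^1(W,\mu_n)$ via the description of \Cref{lem:H1div} — the classes of the $g_i$ on $V_1$ and of their conjugates on the other components being bases there because $([D_i,g_i])$ is a basis of $\HH^1(V_1,\mu_n)$. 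In other words $\Delta$ is precisely the Kummer datum of the cover $V'\to V$, which the construction identifies with the $\Gal(L|k_0)$-orbit of $V_1\nt$; enlarging the base field to the Galois closure $L\supseteq k_1,k_2,\mu_n(k)$ was what made this datum defined over $k_0$ and stable. Every $\tilde h\in\Aut(E/k_0(U_0))$ base-changes to an automorphism of $E\otimes_{k_0}k$ which is induced by an element of $\Aut(V_0/U_0)=\Aut(V/U)$ permuting the components of $V$ compatibly with the $\Gal(L|k_0)$-structure, and such an automorphism sends $\HH^1(W,\mu_n)$ functorially onto $\HH^1(W',\mu_n)$ for the image component $W'$; hence it sends the $W$-part of $\Delta$ onto the $W'$-part of $\Delta$, and $\Delta$ is globally preserved. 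The delicate point is the bookkeeping — matching each component and each conjugate $g_i$ with its image, including the multiplicities $[L:k_1]$; the rest is formal.

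Granting the stabilisation of $\Delta$, the extension $k_0(V'_0)/k_0(U_0)$ is Galois, so $V'_0\to U_0$ is a Galois cover. As a byproduct one reads off its automorphisms explicitly as the maps $(t,x,y,z_1,\dots,z_r)\mapsto(\sigma(t),x',y',z'_1,\dots,z'_r)$ with $\sigma\in\Gal(L|k_0)$, with $(x',y')$ the image of $(x,y)$ under an automorphism of $V_0/U_0$ whose restriction to $\Gal(k_1|k_0)$ agrees with that of $\sigma$, and with $z'_i$ an $n$-th root of the corresponding conjugate of $g'_i$ (which lies in $k_0(V'_0)$ precisely by the stabilisation of $\Delta$); there are $n^r[L:k_1]\deg(V_0\to U_0)=\deg(V'_0\to U_0)$ of them, reconfirming that the cover is Galois.
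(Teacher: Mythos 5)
Your reduction to normality of $k_0(V'_0)/k_0(U_0)$, the tower $k_0(U_0)\subseteq k_0(V_0)\subseteq E\subseteq k_0(V'_0)$, and the translation via Kummer theory of normality of the top layer into the statement that every $\tilde h\in\Aut(E/k_0(U_0))$ stabilises $\Delta\subseteq E^\times/(E^\times)^n$ are all sound, and you have correctly isolated the crux (which the paper's own proof, a direct count of automorphisms, passes over: it simply asserts that roots $z'_i$ of the conjugates $\phi(g'_i)$ exist in $k_0(V'_0)$). The problem is that your argument for the stabilisation of $\Delta$ does not close this gap. You verify $\tilde h(\Delta)=\Delta$ only after applying the map $E^\times/(E^\times)^n\to\prod_{\iota}k(W_\iota)^\times/\bigl(k(W_\iota)^\times\bigr)^n$ induced by $E\to E\otimes_{k_0}k$, and this map is not injective: an $f\in E^\times$ becomes an $n$-th power in $E\otimes_{L,\iota}k$ exactly when $f=ce^n$ with $c\in L^\times$, $e\in E^\times$ (the cocycle $\tau\mapsto\tau(\sqrt[n]{f})/\sqrt[n]{f}$ on $\Gal(k|L)$ takes values in $\mu_n(L)$, hence comes from a class in $L^\times/(L^\times)^n$), so the kernel is the image of $L^\times/(L^\times)^n$. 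The functorial/geometric argument therefore only yields $\tilde h(\Delta)\equiv\Delta$ modulo constant classes, whereas Kummer theory over $E$ requires equality on the nose: a discrepancy by some $c\in L^\times\setminus(L^\times)^n$ means $\iota(k_0(V'_0))$ contains the constant-field extension $L(\sqrt[n]{c})$, which $k_0(V'_0)$ (whose constant field is $L$) does not, and normality fails.

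This is not a cosmetic bookkeeping issue: with the construction exactly as written it can really go wrong. Take $k_0=\QQ$, $n=2$, $U_0=\GG_m$ and $V_0\to U_0$ the squaring map (or even $U_0=V_0=\GG_m$). The group $\HH^1(\GG_{m,\Qbar},\mu_2)\cong\ZZ/2\ZZ$ is generated by $[0,x]=[0,(1+\sqrt2)x]$; choosing the representative $g_1=(1+\sqrt2)x$ gives $k_2=L=\QQ(\sqrt2)$ and, for the nontrivial $\tau\in\Gal(L|\QQ)$, $\tau(g_1)/g_1=-(1-\sqrt2)^2$ with $-1\notin(L^\times)^2$. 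Concretely, the conjugates of $\theta=\sqrt{(1+\sqrt2)x}$ over $\QQ(x)$ include $\sqrt{(1-\sqrt2)x}=\pm ix/\theta$, and $i$ does not lie in $\QQ(\sqrt2,x,\theta)$, so this field is not normal over $\QQ(x)$ and the resulting $V'_0$ is not Galois over $U_0$. So the stabilisation of $\Delta$ is not a formal consequence of the functoriality of $\HH^1(-,\mu_n)$: it depends on the choice of the representatives $g_i$, and a correct proof must either constrain that choice or enlarge $L$ (in the spirit of \Cref{rk:galext}) so that the constants $c$ arising in $\tilde h(g_i)\prod_jg_j^{-a_{ij}}=ce^n$ become $n$-th powers in $L$. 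Your proof, as it stands, supplies neither; granting that missing step, the remainder of your argument (and the resulting description and count of the automorphisms) is fine.
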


\begin{rk}\label{rk:galext}
If need be, the Galois extension $L$ of $k_0$ may be chosen to be a little larger, for instance to make sure that the points at infinity of the curve $V'$ are defined over $L$. This does not affect any of the previous proofs.
\end{rk}

\section{Schemes of cohomological dimension 1}\label{sec:cover}
\label{subsec:cd1}

Let $n$ be a positive integer, and $\Lambda=\ZZ/n\ZZ$. Let $X$ be an integral noetherian scheme, and $\bareta$ be a generic geometric point of $X$.

\begin{prop} Let $\LL$ be a finite locally constant sheaf of $\ZZ/n\ZZ$-modules on $X$. Let $Y\to X$ be an étale Galois cover such that $\LL|_Y$ is constant and the morphism 
\[ \HH^1(X,\LL)\to \HH^1(Y,\LL|_Y)\]
is trivial. Then the morphism
\[ \tau_{\leqslant 1}\RG(\Aut(Y|X),\LL_{\bareta})\to \tau_{\leqslant 1}\RG(X,\LL) \]
is a quasi-isomorphism.
\begin{proof} Let $G$ be the automorphism group of $Y\to X$. The associated Hochschild-Serre spectral sequence yields the following short exact sequence: 
\[ 0\to \HH^1(G,\LL_{\bareta})\to \HH^1(X,\LL)\to \HH^0(G,\HH^1(Y,\LL|_Y)) \] 
Hence the map $\HH^1(G,\LL_{\bareta})\to \HH^1(X,\LL)$ is an isomorphism, and \[\RG(G,\LL_{\bareta})\to \RG(X,\LL) \] yields isomorphisms on cohomology groups in degree 0 and 1. 
\end{proof}
\end{prop}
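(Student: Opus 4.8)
The plan is to use the Hochschild--Serre (Cartan--Leray) spectral sequence associated to the Galois cover $Y\to X$ with group $G=\Aut(Y|X)$, exactly as in the statement's own setup. Recall that for a locally constant sheaf $\LL$ on $X$, its stalk $\LL_{\bareta}$ at the generic geometric point carries a $\pi_1(X)$-action, hence a $G$-action through the quotient $\pi_1(X)\twoheadrightarrow G$, and $\HH^\bullet(Y,\LL|_Y)=\HH^\bullet(\ker(\pi_1(X)\to G),\LL_{\bareta})$. The spectral sequence reads
\[
E_2^{p,q}=\HH^p(G,\HH^q(Y,\LL|_Y))\Longrightarrow \HH^{p+q}(X,\LL).
\]
First I would extract the low-degree exact sequence (the five-term sequence)
\[
0\to \HH^1(G,\LL_{\bareta})\to \HH^1(X,\LL)\to \HH^0(G,\HH^1(Y,\LL|_Y))\to \HH^2(G,\LL_{\bareta})\to \HH^2(X,\LL),
\]
noting that $E_2^{0,0}=\HH^0(G,\LL_{\bareta})=\HH^0(X,\LL)$ already gives an isomorphism in degree $0$. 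By hypothesis the composite $\HH^1(X,\LL)\to \HH^1(Y,\LL|_Y)$ is zero, so a fortiori the edge map $\HH^1(X,\LL)\to \HH^0(G,\HH^1(Y,\LL|_Y))$ is zero. Combined with injectivity of $\HH^1(G,\LL_{\bareta})\to \HH^1(X,\LL)$ from the five-term sequence, this forces $\HH^1(G,\LL_{\bareta})\to \HH^1(X,\LL)$ to be an isomorphism.

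It then remains to package this into the statement about truncations. The natural map of complexes $\RG(G,\LL_{\bareta})\to \RG(X,\LL)$ (induced by pullback along $Y\to X$, or equivalently by the edge morphism of the spectral sequence) induces isomorphisms on $\HH^0$ and $\HH^1$ by the two observations above; applying the truncation functor $\tau_{\leqslant 1}$, which only sees cohomology in degrees $\leqslant 1$, turns a map inducing isomorphisms on $\HH^0,\HH^1$ into a quasi-isomorphism $\tau_{\leqslant 1}\RG(G,\LL_{\bareta})\xrightarrow{\sim}\tau_{\leqslant 1}\RG(X,\LL)$. This is precisely the claim.

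I do not expect a serious obstacle here: the argument is the standard five-term exact sequence manipulation, and the only points requiring a word of care are (i) the identification of $\HH^q(Y,\LL|_Y)$ as group cohomology of the appropriate open subgroup of $\pi_1(X)$ with the correct $G$-module structure on $\HH^1$ (so that $E_2^{0,1}=\HH^0(G,\HH^1(Y,\LL|_Y))$ is literally the target of the edge map out of $\HH^1(X,\LL)$), and (ii) checking that the natural comparison map $\RG(G,\LL_{\bareta})\to\RG(X,\LL)$ at the level of complexes does induce the edge maps of the spectral sequence in degrees $0$ and $1$ — this is formal from the construction of Hochschild--Serre as the Grothendieck spectral sequence for the composite of invariants functors. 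Since $X$ is only assumed noetherian integral (cohomological dimension $1$ is not even needed for \emph{this} proposition, only for the surrounding results), no further hypotheses intervene.
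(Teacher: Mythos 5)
Your proposal is correct and follows exactly the paper's argument: the five-term exact sequence of the Hochschild--Serre spectral sequence for $Y\to X$, combined with the hypothesis that the edge map $\HH^1(X,\LL)\to \HH^0(G,\HH^1(Y,\LL|_Y))$ vanishes, forces $\HH^1(G,\LL_\bareta)\to\HH^1(X,\LL)$ to be an isomorphism, and the degree-$0$ isomorphism is immediate. Your added care about the $G$-module structure on $\HH^1(Y,\LL|_Y)$ and the compatibility of the comparison map with the edge morphisms is a reasonable elaboration of what the paper leaves implicit.
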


\begin{rk} If in addition $X$ has cohomological dimension 1 then \[ \tau_{\leqslant 1}\RG(\Aut(Y|X),\LL_{\bareta})\to \RG(X,\LL) \]
is a quasi-isomorphism.
\end{rk}

\begin{rk}
Here is how to construct a cover $Y$ as in the proposition, provided that $\HH^1(W,\Lambda)$ is finite. Pick an étale Galois cover $W\to X$ such that $\LL|_{W}$ is a constant sheaf. Set $Y=W\nt$: since $Y\to W$ is characteristic, the cover $Y\to X$ is still Galois, and \Cref{prop:morphH1zero} ensures that $\HH^1(X,\LL)\to\HH^1(Y,\LL|_Y)$ is trivial.
\end{rk}

Given a profinite group $H$, we will denote by $P_H(\Lambda)$ the usual projective resolution (sometimes called \textit{bar resolution}) of the trivial $\Lambda[[H]]$-module $\Lambda$. 

\begin{prop}\label{prop:H1cd1} Suppose $X$ is of cohomological dimension 1. Let $\LL=[\LL^0\to\LL^1\to\dots\to\LL^s]$ be a complex of finite locally constant sheaves of $\Lambda$-modules on $X$, and $Y$ be an étale Galois cover of $X$ such that each map $\HH^1(X,\LL^i)\to \HH^1(Y,\LL^i|_Y)$ is trivial. Write $G=\Aut(Y|X)$. Consider the double complex $B^{\bullet,\bullet}$ defined by $B^{i,j}=\Hom_{\Lambda[G]}(\tau_{\geqslant -1}P_{G}^{-j}(\Lambda),\LL_\bareta^i)$. Then $\RG(X,\LL)$ is represented by the total complex $\Tot B^{\bullet,\bullet}$.
\begin{proof} We wish to compute $\RG(X,\LL)=\RG(\pi_1(X),\LL_\bareta)=\RHom_{\Lambda[[\pi_1(X)]]}(\Lambda,\LL_\bareta)$, which is represented by the complex \[ \Hom^\bullet_{\Lambda[[\pi_1(X)]]}(P_\pi(\Lambda),\LL_\bareta)=\Tot(A^{\bullet,\bullet}) \] where $A^{i,j}=\Hom_{\Lambda[[\pi_1(X)]]}(P_{\pi_1(X)}^{-j}(\Lambda),\LL_\bareta^i)$.
The map $B^{\bullet,\bullet}\to A^{\bullet,\bullet}$ induced by the quotient map $\pi_1(X)\to G$ defines a morphism between the spectral sequence associated to these quotients.
Recall that for each $i\in\{1\dots s\}$, the map \[\tau_{\leqslant 1}\Hom_{\Lambda[G]}(P_G(\Lambda),\LL^i_\bareta)\to 
\Hom_{\Lambda[[\pi_1(X)]]}(P_{\pi_1(X)}(\Lambda),\LL^i_\bareta)\] is a quasi-isomorphism. 
Since the functor $\Hom(-,\LL^i_\bareta)$ is left exact, \[\Hom_{\Lambda[G]}(\tau_{\geqslant -1}P_G(\Lambda),\LL^i_\bareta)=\tau_{\leqslant 1}\Hom_{\Lambda[H]}(P_G(\Lambda),\LL^i_\bareta).\]
Hence the map $B^{\bullet,\bullet}\to A^{\bullet,\bullet}$ defines, on each column, a quasi-isomorphism of complexes. It therefore induces an isomorphism between the first pages of the corresponding spectral sequences (for the upward orientation) associated to $B^{\bullet,\bullet}$ and $A^{\bullet,\bullet}$: in position $(i,j)$, it is the isomorphism $\HH^j(H,\LL_\bareta^i)\to \HH^j(\pi_1(X),\LL_\bareta^i)$ for $j\leqslant 1$, and $0\to \HH^j(\pi_1(X),\LL_\bareta^i)=0$ otherwise. Therefore, the map between the abutments of these two spectral sequences is an isomorphism, i.e. the map $\Tot(B^{\bullet,\bullet})\to \Tot(A^{\bullet,\bullet})$ is a quasi-isomorphism.
\end{proof}
\end{prop}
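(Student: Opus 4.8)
The plan is a spectral sequence comparison, via the quotient $\pi_1(X)\twoheadrightarrow G$. First I would recall that, the $\LL^i$ being finite locally constant sheaves of $\Lambda$-modules on the connected scheme $X$, the hypercohomology $\RG(X,\LL)$ is computed by $\RHom_{\Lambda[[\pi_1(X)]]}(\Lambda,\LL_\bareta)$, hence by the total complex of the double complex $A^{\bullet,\bullet}$ with $A^{i,j}=\Hom_{\Lambda[[\pi_1(X)]]}(P_{\pi_1(X)}^{-j}(\Lambda),\LL^i_\bareta)$; since $\LL$ is bounded (in degrees $0,\dots,s$), each total degree of $\Tot A^{\bullet,\bullet}$ is a finite direct sum, so this is a genuine complex. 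The quotient $\pi_1(X)\to G$, a comparison of projective resolutions, and the surjection of complexes $P_G(\Lambda)\twoheadrightarrow\tau_{\geqslant -1}P_G(\Lambda)$ together induce a morphism of double complexes $B^{\bullet,\bullet}\to A^{\bullet,\bullet}$, compatible with both differentials (the horizontal one being that of $\LL$, hence unchanged). It then suffices to show $\Tot B^{\bullet,\bullet}\to\Tot A^{\bullet,\bullet}$ is a quasi-isomorphism.

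The heart of the argument is to check that, for each fixed $i$, the map of columns $B^{\bullet,i}\to A^{\bullet,i}$ is a quasi-isomorphism. On the one hand, the column $A^{\bullet,i}$ computes $\RG(\pi_1(X),\LL^i_\bareta)=\RG(X,\LL^i)$, which by the hypothesis $\cd X\leqslant 1$ has cohomology concentrated in degrees $0$ and $1$. On the other hand, $\Hom_{\Lambda[G]}(-,\LL^i_\bareta)$ being left exact as a contravariant functor, applying it to $\tau_{\geqslant -1}P_G(\Lambda)$ turns the cokernel appearing in that truncation into a kernel, so $B^{\bullet,i}=\Hom_{\Lambda[G]}(\tau_{\geqslant -1}P_G(\Lambda),\LL^i_\bareta)$ is precisely the canonical truncation $\tau_{\leqslant 1}\Hom_{\Lambda[G]}(P_G(\Lambda),\LL^i_\bareta)=\tau_{\leqslant 1}\RG(G,\LL^i_\bareta)$. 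Under these two identifications, the column map $B^{\bullet,i}\to A^{\bullet,i}$ is the canonical inflation comparison $\tau_{\leqslant 1}\RG(G,\LL^i_\bareta)\to\RG(X,\LL^i)$, and the hypothesis that $\HH^1(X,\LL^i)\to\HH^1(Y,\LL^i|_Y)$ is trivial, fed into the Proposition and Remark immediately preceding this statement (Hochschild--Serre, plus $\cd X\leqslant 1$), says exactly that this map is a quasi-isomorphism.

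It remains to deduce that $\Tot B\to\Tot A$ is a quasi-isomorphism. I would filter both double complexes by columns (by the $i$-degree): since $\LL$ is bounded in the $i$-direction and, by $\cd X\leqslant 1$, both $E_1$-pages vanish outside $0\leqslant j\leqslant 1$, the induced filtrations on the abutments are finite, so both spectral sequences converge. The map of $E_1$-pages is, in position $(i,j)$, the isomorphism $\HH^j(G,\LL^i_\bareta)\to\HH^j(\pi_1(X),\LL^i_\bareta)$ for $j\leqslant 1$ and $0\to 0$ otherwise; hence it is an isomorphism of $E_1$-pages, so of abutments, and $\Tot B\to\Tot A$ is a quasi-isomorphism. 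Since $\Tot A^{\bullet,\bullet}$ represents $\RG(X,\LL)$, so does $\Tot B^{\bullet,\bullet}$. The one delicate point in this plan is the bookkeeping of the second paragraph: correctly identifying $\Hom_{\Lambda[G]}(\tau_{\geqslant -1}P_G(\Lambda),-)$ with $\tau_{\leqslant 1}\RG(G,-)$, and verifying that the resulting column-wise maps are exactly the comparison maps supplied by the preceding results so that those may be invoked; the rest — the morphism of double complexes, and the convergence and comparison of the two spectral sequences — is routine, convergence being forced by the boundedness of $\LL$ together with $\cd X\leqslant 1$.
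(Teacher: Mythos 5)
Your proposal is correct and follows essentially the same route as the paper: the same two double complexes $A^{\bullet,\bullet}$ and $B^{\bullet,\bullet}$, the same identification of each column of $B$ with $\tau_{\leqslant 1}\RG(G,\LL^i_\bareta)$ via left exactness of $\Hom_{\Lambda[G]}(-,\LL^i_\bareta)$, the same appeal to the preceding proposition and remark for the column-wise quasi-isomorphisms, and the same comparison of the two column-filtration spectral sequences. Your added remarks on convergence (boundedness of $\LL$ and vanishing for $j>1$) make explicit a point the paper leaves implicit, but do not change the argument.
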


\section{Explicit computation of $\RG$ of a (possibly singular) curve}\label{sec:explicit}

In this section, we are going to describe how to compute the cohomology of a complex of constructible sheaves on a curve. Let $k_0$ be a field, and $k$ be a separable closure of $k_0$. Consider a geometrically irreducible curve $X_0$ over $k_0$, and its base change $X$ over $k$. We are allowed to make the following additional assumptions, which do not alter the computed cohomology complex. \begin{itemize}
\item The field $k_0$ is perfect, hence $k$ is algebraically closed: the perfect closure $k_0^\pf$ of $k_0$  being a purely inseparable extension, the base change $-\times_{k_0} k_0^\pf$ induces an isomorphism on cohomology.
\item The curve $X$ is reduced: being a universal homeomorphism, the map $X_\red\to X$ induces an isomorphism on cohomology.
\item The curve $X$ has at worst multicross singularities \cite[0C1P]{stacks}: the seminormalisation map $X^\sn\to X$ being a universal homeomorphism \cite[0EUS]{stacks}, it induces an isomorphism on cohomology, so we may assume $X$ is seminormal. Since a seminormal curve over an algebraically closed field has at worst multicross singularities \cite[§2, Cor. 1]{davis_seminorm}, we may assume this is the case for $X$. 
\end{itemize}

\subsection{Cohomology with support in a 0-dimensional subscheme}

Let $k$ be an algebraically closed field. Let $n$ be an integer invertible in $k$, and $\Lambda=\ZZ/n\ZZ$. Let $X$ be an integral curve over $k$. Consider a nonempty closed zero-dimensional subscheme $i\colon Z\to X$, and its open complement $j\colon U\to X$.  Let $\RG_Z(X,-)$ denote cohomology with support in $Z$. Since $\RG_Z(X,-)=\oplus_{z\in Z}\RG_z(X,-)$, we consider a single point $z\in Z$ and focus on computing $\RG_z(X,-)$.

\begin{lem}\label{lem:cohsuppz} Let $\LL$ be a finite locally constant sheaf on $U$. \begin{itemize}[label=$\bullet$]
\item $\HH^0_z(X,j_!\LL)=\HH^0_z(X,j_\star\LL)=\HH^1_z(X,j_\star\LL)=0$
\item $\HH^1_z(X,j_!\LL)=\HH^0(z,i^\star j_\star\LL)$
\item $\HH^2_z(X,j_!\LL)=\HH^2_z(X,j_\star\LL)$
\item For all $i\geqslant 3$, $\HH^i_z(X,j_!\LL)=\HH^i_z(X,j_\star\LL)=0$.
\end{itemize}
\begin{proof} Since $\HH^0(X,j_\star\LL)\to \HH^0(X-z,j_\star\LL)$ is an isomorphism and $\HH^1(X,j_\star\LL)\to \HH^1(X-z,j_\star\LL)$ is always a monomorphism, the long exact sequence for cohomology with support associated to $j_\star\LL$ shows that $\HH^0_z(X,j_\star\LL)=\HH^1_z(X,j_\star\LL)=0$. 
Moreover, for any $j\geqslant 3$, the groups $\HH^{j-1}(X-z,j_\star\LL)$ and $\HH^j(X,\LL)$ are trivial, hence $\HH^j_z(X,j_\star\LL)=0$. 
Recall that $\HH^i_z(X,i_\star-)=\HH^i(z,-)$. The long exact sequence of $\HH^i_z(X,-)$ associated to the short exact sequence \[ 0\to j_!\LL\to j_\star\LL\to i_\star i^\star j_\star\LL\to 0\]
shows that $\HH^0_z(X,j_!\LL)=0$, $\HH^1_z(X,j_!\LL)=\HH^0(z,i^\star j_\star\LL)$ and also $\HH^2_z(X,j_!\LL)=\HH^2_z(X,j_\star\LL)$. 
The groups $\HH^i_z(X,j_!\LL)$ are also trivial as soon as $i\geqslant 3$.
\end{proof}
\end{lem}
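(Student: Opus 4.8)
The plan is to deduce all four statements from two long exact sequences attached to $z$. Since $\HH^\bullet_z(X,-)$ depends only on an étale (indeed Zariski) neighbourhood of $z$, we may replace $X$ by the open subscheme $X\setminus(Z\setminus\{z\})$ and thereby assume $Z=\{z\}$, so that $i\colon z\hookrightarrow X$ is exactly the closed complement of $j\colon U\hookrightarrow X$ and $U=X\setminus\{z\}$. As $k$ is algebraically closed, $z=\Spec k$, so for any abelian sheaf $\G$ on $z$ one has $\HH^q_z(X,i_\star\G)=\HH^q(z,\G)$, which equals $\G$ for $q=0$ and vanishes for $q>0$. The two sequences I will use are the localisation sequence
\[\cdots\to\HH^q_z(X,\F)\to\HH^q(X,\F)\to\HH^q(U,\F|_U)\to\HH^{q+1}_z(X,\F)\to\cdots\]
and the long exact sequence of $\HH^\bullet_z(X,-)$ attached to the short exact sequence $0\to j_!\LL\to j_\star\LL\to i_\star i^\star j_\star\LL\to 0$.

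First I would compute $\HH^\bullet_z(X,j_\star\LL)$ from the localisation sequence with $\F=j_\star\LL$, noting $\F|_U=\LL$. Two inputs are needed: (i) $\HH^0(X,j_\star\LL)\to\HH^0(U,\LL)$ is an isomorphism, both sides identifying with $\HH^0(U,\LL)$; and (ii) $\HH^1(X,j_\star\LL)\to\HH^1(U,\LL)$ is injective, being the edge map in the Leray spectral sequence for $j$, whose five-term sequence begins $0\to\HH^1(X,j_\star\LL)\to\HH^1(U,\LL)\to\HH^0(X,R^1j_\star\LL)$. From (i) and (ii) the localisation sequence gives $\HH^0_z(X,j_\star\LL)=\HH^1_z(X,j_\star\LL)=0$. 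For degrees $\geqslant 3$ I would invoke cohomological dimension: $\HH^q(X,j_\star\LL)=0$ for $q\geqslant 3$ since $X$ is a curve over an algebraically closed field, and $\HH^q(U,\LL)=0$ for $q\geqslant 2$ since $U=X\setminus\{z\}$ is a non-proper curve over an algebraically closed field; substituting into the localisation sequence yields $\HH^q_z(X,j_\star\LL)=0$ for $q\geqslant 3$. Nothing is claimed, nor true, in degree $2$.

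Next I would feed the short exact sequence $0\to j_!\LL\to j_\star\LL\to i_\star i^\star j_\star\LL\to 0$ into $\HH^\bullet_z(X,-)$. Writing $\G=i^\star j_\star\LL$ — the sheaf on $z$ whose global sections the statement abbreviates $\HH^0(z,\LL)$ — I substitute $\HH^q_z(X,i_\star\G)=\HH^q(z,\G)$ (which is $\G$ in degree $0$ and zero otherwise) together with the values of $\HH^\bullet_z(X,j_\star\LL)$ found above. In degrees $0$ and $1$ the sequence reads $0\to\HH^0_z(X,j_!\LL)\to 0\to\HH^0(z,\G)\to\HH^1_z(X,j_!\LL)\to 0$, giving $\HH^0_z(X,j_!\LL)=0$ and $\HH^1_z(X,j_!\LL)=\HH^0(z,\G)$. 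Around degree $2$, the vanishing of $\HH^1_z(X,i_\star\G)$ and $\HH^2_z(X,i_\star\G)$ produces an isomorphism $\HH^2_z(X,j_!\LL)\xrightarrow{\sim}\HH^2_z(X,j_\star\LL)$. Finally, for $q\geqslant 3$ the vanishing of $\HH^{q-1}_z(X,i_\star\G)$, $\HH^q_z(X,i_\star\G)$ and $\HH^q_z(X,j_\star\LL)$ forces $\HH^q_z(X,j_!\LL)=0$.

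I do not expect a genuine obstacle: the argument is an assembly of standard facts. The only points needing care are the two inputs (i)--(ii) for $j_\star\LL$ — in particular the degree-$1$ monomorphism, for which the Leray spectral sequence is the cleanest route — and the precise cohomological dimension statements $\cd X\leqslant 2$ and $\cd(X\setminus\{z\})\leqslant 1$, which rely on $k$ being algebraically closed and on $X\setminus\{z\}$ being non-proper.
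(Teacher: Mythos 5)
Your argument is correct and follows essentially the same route as the paper: the localisation sequence for $z$ handles $j_\star\LL$ (using the isomorphism in degree $0$, the injectivity in degree $1$, and cohomological dimension in degrees $\geqslant 3$), and the long exact sequence of $\HH^\bullet_z(X,-)$ applied to $0\to j_!\LL\to j_\star\LL\to i_\star i^\star j_\star\LL\to 0$ transfers everything to $j_!\LL$. The only cosmetic differences are your initial excision reduction to $Z=\{z\}$ (the paper instead works directly with $X-z$) and your explicit appeal to the Leray five-term sequence for the degree-$1$ monomorphism, which the paper takes as standard.
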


Let us now compute the group $\HH^2_z(X,j_!\LL)=\HH^2_z(X,j_\star\LL)$. From now on, we assume $X$ to have only multicross singularities. Let $z_1,\dots,z_r$ be the preimages of $z$ in the normalisation $\tX$ of $X$. Denote by $X_{z}$ the strict henselisation of $X$ at $z$. It contains one closed point $z'$, as well as $r$ minimal primes $z'_1,\dots,z'_r$. Set $U_{z}\coloneqq U\times_X X_{z}$. Consider the following cartesian diagram.
\[
\begin{tikzcd}
U_{z}\arrow[d,"g'"]\arrow[r,"j'"] & X_{z}\arrow[d,"g"] &\arrow[l,"i'",swap] z' \arrow[d] \\
U \arrow[r,"j"] & X & \arrow[l,"i",swap] z
\end{tikzcd}
\]
The following proof is given in \cite[II, Prop. 1.1]{milneADT} in the special case of smooth curves.
\begin{lem}\label{lem:cohsuppzero} Let $\F$ be a sheaf on $U_{z}$. For every nonnegative integer $q$, the group $\HH^q(X_{z},j'_!\F)$ is trivial.
\begin{proof} The assertion holds for $q=0$ since $\HH^0(X_{z},j'_!\F)$ is the kernel of the map $\HH^0(X_{z},j'_\star\F)\to\HH^0(X_{z},i'_\star i'^\star j_\star\F)$, which is simply the identity map of $\HH^0(U_{z},\F)$. Let us first show that for any injective sheaf $J$ on $U_{z}$, the sheaf $j'_!\F$ on $X$ is acyclic. To this end, we are going to prove that
\[ 0\to j'_!J\to j'_\star J\to i'_\star i'^\star j'_\star J\to 0\] is an injective resolution of $j'_!J$ ;
the long exact sequence associated to this short exact sequence then shows that $\HH^q(X_{z},j'_\star J)=0$ for any $q\geqslant 1$.
Fix separable closures of $k(z'_1),\dots,k(z'_r)$, and denote by $I_1,\dots,I_r$ the associated Galois groups. The functor $i^\star j_\star$ may be rewritten as follows: \[ \begin{array}{rcl} \Mod_{I_1}\times\dots\times\Mod_{I_r}&\to &\Ab \\ (M_1,\dots,M_r)&\mapsto& M_1^{I_1}\times \dots\times M_r^{I_r}\end{array}\]
This functor admits a left adjoint, which sends an abelian group $M$ to $(M,\dots,M)$ with trivial $(I_1,\dots,I_r)$-action. Since this left adjoint is exact, $i'^\star j'_\star$ sends injectives to injectives. The functors $i'_\star$ and $j'_\star$ also send injectives to injectives, therefore the short exact sequence above is an injective resolution of $j'_!J$.
We are now ready to prove the result. Let $\F$ be a sheaf on $U_{z}$, and let $J^\bullet$ be an injective resolution of $\F$. Then $j'_!J^\bullet$ is an acyclic resolution of $j'_!\F$, and $\HH^q(X_{z},j'_!\F)$ is the $q^{\text{th}}$ cohomology group of the complex $\Gamma(X_{z},j'_!J^\bullet)$. The latter group is the image of $\F$ under the $q^{\text{th}}$ right derived functor of $\Gamma(X_{z},j'_!-)$, which is zero as proven above. 
\end{proof}
\end{lem}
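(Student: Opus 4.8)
The plan is to compute $\HH^q(X_{\bar z},j'_!\F)$ via the Grothendieck spectral sequence for the composite $\Gamma(X_{\bar z},-)\circ j'_!$, which collapses for a cheap reason: since $X_{\bar z}$ is strictly henselian with closed point $z'$, taking global sections of an étale sheaf on $X_{\bar z}$ agrees with taking its stalk at $z'$, and the stalk of $j'_!\G$ at $z'$ vanishes for every sheaf $\G$ on $U_{\bar z}$ because $z'\notin U_{\bar z}$. Hence the functor $\Gamma(X_{\bar z},j'_!-)$ is identically zero. Consequently, once we know that $j'_!$ carries injective sheaves on $U_{\bar z}$ to $\Gamma(X_{\bar z},-)$-acyclic sheaves on $X_{\bar z}$, we are done: choosing an injective resolution $\F\to J^\bullet$ and applying the exact functor $j'_!$ produces an acyclic resolution $j'_!\F\to j'_!J^\bullet$, so $\RG(X_{\bar z},j'_!\F)$ is represented by $\Gamma(X_{\bar z},j'_!J^\bullet)$, which is the zero complex. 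So the whole statement reduces to the $\Gamma(X_{\bar z},-)$-acyclicity of $j'_!J$ for $J$ injective on $U_{\bar z}$.

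To prove that, I would exhibit a short injective resolution of $j'_!J$ on $X_{\bar z}$, namely the canonical exact sequence
\[ 0\to j'_!J\to j'_\star J\to i'_\star i'^\star j'_\star J\to 0. \]
Here $j'_\star J$ is injective because $j'_\star$ is right adjoint to the exact restriction functor $j'^\star$, and $i'_\star i'^\star j'_\star J$ is injective because $i'_\star$ is right adjoint to the exact functor $i'^\star$ while $i'^\star j'_\star$ itself preserves injectives. This last point is the only non-formal input, and it is where the standing reduction to a curve with at worst ordinary singularities is used: the branches of the normalisation of $X_{\bar z}$ are then strictly henselian discrete valuation rings, so $U_{\bar z}$ is a disjoint union $\coprod_{i=1}^r\Spec k(z'_i)$ and the category of abelian étale sheaves on it is $\prod_{i=1}^r\Mod_{I_i}$, with $I_i$ the absolute Galois group of $k(z'_i)$. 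Under this identification $i'^\star j'_\star$ becomes the functor $(M_i)_i\mapsto\prod_i M_i^{I_i}$ (using strict henselianness once more to evaluate the stalk at $z'$), and this is right adjoint to the visibly exact functor $M\mapsto(M,\dots,M)$ equipped with trivial actions; hence it sends injectives to injectives.

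It then remains to read off the acyclicity from the long exact sequence in $\HH^\bullet(X_{\bar z},-)$ attached to the displayed resolution. As $j'_\star J$ and $i'_\star i'^\star j'_\star J$ have no higher cohomology, $\HH^q(X_{\bar z},j'_!J)=0$ automatically for $q\geqslant 2$, while $\HH^1(X_{\bar z},j'_!J)$ is the cokernel of $\Gamma(X_{\bar z},j'_\star J)\to\Gamma(X_{\bar z},i'_\star i'^\star j'_\star J)$; by strict henselianness both groups equal the stalk $(j'_\star J)_{z'}$ and the map is the identity, so the cokernel vanishes (the same observation kills $\HH^0$, which is the kernel of this identity). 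I expect the main obstacle to be precisely the description of $i'^\star j'_\star$ and the construction of its exact left adjoint, i.e. pinning down the local étale structure of the seminormal curve at a multicross point; everything else is adjunction bookkeeping together with the standard fact that global sections on a strictly henselian local scheme compute the stalk at the closed point.
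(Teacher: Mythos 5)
Your proof is correct and follows essentially the same route as the paper's: the same short exact sequence $0\to j'_!J\to j'_\star J\to i'_\star i'^\star j'_\star J\to 0$ exhibited as an injective resolution via the exact left adjoint $M\mapsto (M,\dots,M)$ of $i'^\star j'_\star$, followed by the observation that the derived functors of $\Gamma(X_{\bar z},j'_!-)$ all vanish. The only (cosmetic) difference is that you see the vanishing of $\Gamma(X_{\bar z},j'_!-)$ directly from the stalk of $j'_!$ at the closed point of the strictly henselian scheme, whereas the paper identifies $\HH^0(X_{\bar z},j'_!\F)$ as the kernel of an identity map; both are fine.
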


Let $\nu\colon\tX\to X$ be the normalisation map. Set $\tilde z\coloneqq z\times_X\tX$. 

\begin{prop}\label{prop:RGznorm} Let $\LL$ be a finite locally constant sheaf on $U$. The map \[\RG_z(X,j_\star\LL)\to\RG_{\tilde z}(\tX,\nu^\star j_\star\LL)\] is a quasi-isomorphism.
\begin{proof} Denote by $\eta_1,\dots,\eta_r$ the generic points of the strict henselisations of $\tX$ at the preimages $z_1,\dots,z_r$ of $z$ in $\tX$. Denote by $\widetilde{X_{z}}$ the normalisation of the strict henselisation of $X$ at $z$. The normalisation of $U_{z}$ is \[ U_{z}\times_{X_{z}}\widetilde{X_{z}}=U\times_X(\tX_{z_1}\sqcup\dots\sqcup \tX_{z_r})=\eta_1\sqcup\dots\sqcup\eta_r\] and since normalisation is birational, the map \[ \eta_1\sqcup\dots\sqcup \eta_r\to U_{z} \]
is an isomorphism. \Cref{lem:cohsuppz} shows that it suffices to prove that \[ \HH^2_z(X,j_\star\LL)\to\HH^2_{\tilde z}(\tX,\nu^\star j_\star\LL)=\HH^2_{z_1}(\tX,\nu^\star j_\star\LL)\times\dots\times\HH^2_{z_r}(\tX,\nu^\star j_\star\LL)\] is an isomorphism. 
Let us compute $\HH^2_z(X,j_\star\LL)$. By excision, there is a canonical isomorphism \[ \HH^2_z(X,j_\star\LL)\xrightarrow{\sim} \HH^2_{z'}(X_{z},g^\star j_!\LL)=\HH^2_{z'}(X_{z},j'_!g'^\star \LL).\]
The long exact sequence in cohomology with support on $z'$ for the sheaf $j'_!g'^\star\LL$ reads:
\[ \HH^1(X_{z},j'_!g'^\star \LL)\to \HH^1(U_{z},g'^\star\LL)\to \HH^2_{z'}(X_{z},j'_!g'^\star\LL)\to \HH^2(X_{z},j'_!g'^\star\LL).\] \Cref{lem:cohsuppzero} ensures that the map \[\HH^1(U_{z},g'^\star\LL)\to \HH^2_{z'}(X_{z},j'_!g'^\star\LL)\]
is an isomorphism. Since the scheme $U_{z}$ is the coproduct of the points $z'_1,\dots,z'_r$, the group $\HH^1(U_{z},g'^\star\LL)$ is simply $\HH^1(z'_1,\LL_{z'_1})\times\dots\times \HH^1(z'_r,\LL_{z'_r})$. On the other hand, for $t\in \{1\dots r\}$, $\HH^2_{z_t}(\tX,\nu^\star j_\star\LL)$ is isomorphic to $\HH^1(\eta_t,\LL_{\eta_t})$, and the map 
\[ \HH^1(z'_t,\LL_{z'_t})\to \HH^1(\eta_t,\LL_{\eta_t})\]
is simply the isomorphism induced by $\eta_t\xrightarrow{\sim}z'_t$.
\end{proof}
\end{prop}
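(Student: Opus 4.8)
The plan is to reduce the computation of $\RG_z(X, j_\star\LL)$ — which by \Cref{lem:cohsuppz} is concentrated in degree $2$ — to a local computation on the normalization, where everything becomes a Galois cohomology group of a local field. First I would invoke \Cref{lem:cohsuppz} to observe that $\HH^i_z(X, j_\star\LL) = 0$ for $i \neq 2$, and similarly on $\tX$ (applied to each $z_t$ separately), so that the claimed quasi-isomorphism reduces to checking that the single map $\HH^2_z(X, j_\star\LL) \to \bigoplus_{t=1}^r \HH^2_{z_t}(\tX, \nu^\star j_\star\LL)$ is an isomorphism. Here I would also use that $\HH^2_z(X, j_\star\LL) = \HH^2_z(X, j_!\LL)$ (again \Cref{lem:cohsuppz}), which lets me switch freely between $j_!$ and $j_\star$ according to convenience — I would work with $j_!$ since it behaves well under excision and pullback.

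Next I would pass to the strict henselization. By excision, $\HH^2_z(X, j_!\LL) \xrightarrow{\sim} \HH^2_{z'}(X_{\bar z}, g^\star j_!\LL)$, and since $g^\star$ commutes with $j_!$ this equals $\HH^2_{z'}(X_{\bar z}, j'_! g'^\star\LL)$. Now I would write the long exact sequence in cohomology with support in $z'$ for the sheaf $j'_! g'^\star\LL$ on $X_{\bar z}$, namely
\[
\HH^1(X_{\bar z}, j'_! g'^\star\LL) \to \HH^1(U_{\bar z}, g'^\star\LL) \to \HH^2_{z'}(X_{\bar z}, j'_! g'^\star\LL) \to \HH^2(X_{\bar z}, j'_! g'^\star\LL),
\]
and apply \Cref{lem:cohsuppzero}, which says the two outer terms vanish (the sheaf $j'_!$ of anything is acyclic on $X_{\bar z}$). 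This yields a canonical isomorphism $\HH^1(U_{\bar z}, g'^\star\LL) \xrightarrow{\sim} \HH^2_{z'}(X_{\bar z}, j'_! g'^\star\LL)$. Then I would use the identification of $U_{\bar z}$ as the disjoint union of the points $z'_1, \dots, z'_r$ (the minimal primes of $X_{\bar z}$), which follows from the ordinary-singularity hypothesis and the birational identification of the normalization of $U_{\bar z}$ with $\eta_1 \sqcup \dots \sqcup \eta_r$ noted just before the statement; this gives $\HH^1(U_{\bar z}, g'^\star\LL) = \prod_{t=1}^r \HH^1(z'_t, \LL_{z'_t})$.

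Finally, on the normalization side I would run exactly the same argument pointwise: for each $t$, $\HH^2_{z_t}(\tX, \nu^\star j_\star\LL) \cong \HH^2_{z_t}((\tX)_{\bar z_t}, \dots) \cong \HH^1(\eta_t, \LL_{\eta_t})$, using the smooth-curve case (this is essentially \cite[II, Prop. 1.1]{milneADT}) — the strict henselization of a smooth curve at a point has a single generic point $\eta_t$, and $\HH^1$ with support at the closed point is $\HH^1$ of that generic point. The whole argument then closes because the comparison map $\HH^2_z(X, j_\star\LL) \to \prod_t \HH^2_{z_t}(\tX, \nu^\star j_\star\LL)$ is compatible, under all the excision and boundary isomorphisms above, with the natural identification $\HH^1(z'_t, \LL_{z'_t}) \xrightarrow{\sim} \HH^1(\eta_t, \LL_{\eta_t})$ induced by the isomorphism $\eta_t \xrightarrow{\sim} z'_t$ (birationality of normalization). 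The main obstacle I anticipate is bookkeeping the compatibility: making sure the map induced by functoriality of $\RG_z$ along $\nu$ actually corresponds, term by term, to these local isomorphisms rather than to some twist of them. I would handle this by checking that every isomorphism used (excision, the $j_!$-acyclicity sequence, the decomposition of $U_{\bar z}$) is natural in the sheaf and compatible with pullback along $\nu$, so that the global comparison map is forced to be the product of the local ones — each of which is an isomorphism, hence so is the whole.
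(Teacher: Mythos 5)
Your proposal follows essentially the same route as the paper's proof: reduction to degree $2$ via \Cref{lem:cohsuppz}, passage to $j_!$ and excision to the strict henselisation, the long exact sequence combined with the acyclicity of $j'_!$ from \Cref{lem:cohsuppzero}, the identification of $U_{\bar z}$ with the disjoint union of the $z'_t$, and the matching local computation on the normalisation via $\eta_t \xrightarrow{\sim} z'_t$. The argument is correct and no further comparison is needed.
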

Denote by $K$ the function field of $X$, and by $K^\sep$ a separable closure of $K$.
\begin{cor}\label{cor:RGZ} For each point $z\in Z\times_X\tX$, choose a place of $\Ksep$ above $z$ and denote by $I_z$ the corresponding inertia group. The one-term complex \[ 0\to 0\to \bigoplus_{z\in Z\times_X\tX}\HH^1(I_z,M) \to 0\to \cdots\]
represents $\RG_Z(X,j_\star\LL)$.
\begin{proof}
This is merely rephrasing \Cref{prop:RGznorm} using the fact that only $\HH^2_z$ is nonzero, which was proven in \Cref{lem:cohsuppz}.
\end{proof}
\end{cor}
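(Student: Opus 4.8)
The plan is to assemble statements already proved, with essentially no new content. First, by \Cref{lem:cohsuppz} the local cohomology groups $\HH^i_z(X,j_\star\LL)$ vanish for every $i\neq 2$ and every $z\in Z$. Since $\RG_Z(X,-)=\bigoplus_{z\in Z}\RG_z(X,-)$, the complex $\RG_Z(X,j_\star\LL)$ has cohomology concentrated in degree $2$, so it is represented by the one-term complex placed in degree $2$ with value $\HH^2_Z(X,j_\star\LL)=\bigoplus_{z\in Z}\HH^2_z(X,j_\star\LL)$.

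Next, I would invoke \Cref{prop:RGznorm}: for each $z\in Z$, pullback along the normalisation $\nu\colon\tX\to X$ gives an isomorphism $\HH^2_z(X,j_\star\LL)\xrightarrow{\sim}\HH^2_{\tilde z}(\tX,\nu^\star j_\star\LL)$, and the right-hand side splits as the direct sum of the groups $\HH^2_{z'}(\tX,\nu^\star j_\star\LL)$ over the preimages $z'$ of $z$ in $\tX$. Summing over $z\in Z$ yields $\HH^2_Z(X,j_\star\LL)\cong\bigoplus_{z'\in Z\times_X\tX}\HH^2_{z'}(\tX,\nu^\star j_\star\LL)$.

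Finally, I would identify each of these summands with $\HH^1(I_{z'},M)$, which is precisely the computation made inside the proof of \Cref{prop:RGznorm}: over the smooth curve $\tX$, the excision argument together with \Cref{lem:cohsuppzero} gives $\HH^2_{z'}(\tX,\nu^\star j_\star\LL)\cong\HH^1(\eta_{z'},\LL_{\eta_{z'}})$, where $\eta_{z'}$ is the generic point of the strict henselisation of $\tX$ at $z'$. The scheme $\eta_{z'}$ is the spectrum of a henselian valued field with separably closed residue field, so its étale cohomology is the continuous cohomology of its absolute Galois group; having fixed the place of $\Ksep$ above $z'$, this group is precisely the inertia group $I_{z'}$, and the stalk $\LL_{\eta_{z'}}$ equipped with this action is the module $M$ (the generic stalk of $\LL$) with its inertia action. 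Substituting, $\HH^2_{z'}(\tX,\nu^\star j_\star\LL)\cong\HH^1(I_{z'},M)$, and feeding this into the direct sum above produces the asserted one-term complex.

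I do not expect a genuine obstacle. The only step needing care is the last identification: one must check that the isomorphism between the étale cohomology of $\eta_{z'}$ and the Galois cohomology of $I_{z'}$ respects the module structures, and that the choices of places above the points of $Z\times_X\tX$ affect everything only through the usual conjugacy ambiguity, which does not change the quasi-isomorphism class of the complex.
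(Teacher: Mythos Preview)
Your proposal is correct and follows essentially the same approach as the paper's proof, which simply says the corollary is a rephrasing of \Cref{prop:RGznorm} together with the vanishing from \Cref{lem:cohsuppz}. You have merely unpacked the identification $\HH^2_{z'}(\tX,\nu^\star j_\star\LL)\cong\HH^1(I_{z'},M)$ a bit more explicitly than the paper does, but this is exactly the content already contained in the proof of \Cref{prop:RGznorm}.
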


\subsection{Cohomology of constructible sheaves}

Let $k$ be an algebraically closed field. Let $n$ be an integer invertible in $k$. Denote by $\Lambda$ the ring $\ZZ/n\ZZ$. Let $X$ be an integral curve with multicross singularities over $k$. Denote by $\nu\colon\tilde X\to X$ its normalisation. Let $\F^\bullet=[\F^0\to\cdots\to \F^t]$ be a complex of constructible sheaves of $\Lambda$-modules on $X$. This section aims to give an explicit description of the cohomology complex $\RG(X,\F)$. \\

Let $j\colon U\to X$ be the inclusion of a regular open affine subscheme of $X$ on which every sheaf $\F^s$ is locally constant. Let $i\colon Z\to X$ be the inclusion of its closed complement with the reduced subscheme structure. Set $\LL^\bullet\coloneqq j^\star\F^\bullet$, and $M^\bullet\coloneqq \LL^\bullet_\bareta$. Let $V\to U$ be an étale Galois cover such that:\begin{itemize}
\item each sheaf $\LL^s|_V$, $s\in \{0,\dots, t\}$ is constant;
\item each map $\HH^1(U,\LL^s)\to \HH^1(V,\LL^s|_V)$, $s\in\{0,\dots, t\}$ is trivial;
\item the ramification index of $V\to U$ above every point of $\tX-U$ is divisible by $n$.
\end{itemize} Denote by $G$ the group $\Aut(V|U)$. Let $\tilde Z$ be the preimage of $Z$ in $\tX$. Given $\tilde z\in\tilde Z$, denote by $I_{\tilde z}\subset G$ its inertia group, and by $P_{\tilde z}\triangleleft I_{\tilde z}$ its wild inertia subgroup. For each integer $s\in \{0,\dots,t\}$, let $\phi^s\colon \F^s\to j_\star \LL^s$ be the adjunction unit. Denote by $\partial_M$ the transition maps of the complex $M^\bullet$, by $\partial_\F$ those of $\F^\bullet$, and by $\partial_G$ the coboundary maps in the cochain complex representing group cohomology with respect to the group $G$; in particular, given an element $m$ in a $G$-module, $\partial_G(m)$ is the crossed homomorphism $g\mapsto g\cdot m-m$. The remainder of this section will be dedicated to the proof of the following result.

\begin{theorem}\label{th:RGamma} In this situation, $\RG(X,\F^\bullet)[1]\in \DD^b_c(\Lambda)$ is represented by the cone of the following morphism of complexes, whose terms are indexed by $s\geqslant 0$:
\[
\begin{tikzcd}
\cdots\arrow[r] & M^s\oplus\Homcr(G,M^{s-1})\oplus\bigoplus_{z\in Z} \F_z^{s}\oplus \bigoplus_{\tilde z\in \tZ}\HH^1(I_{\tilde z}/P_{\tilde z},M_{P_{\tilde z}}^{s-2}) \arrow[d] \arrow[r]&\cdots \\
\cdots\arrow[r] & \bigoplus_{\tilde z\in \tZ}\left(M^s\oplus \Homcr(I_{\tilde z}/P_{\tilde z},M^{s-1}_{P_{\tilde z}})\oplus\HH^1(I_{\tilde z}/P_{\tilde z},M^{s-2}_{P_{\tilde z}})\right) \arrow[r]&\cdots
\end{tikzcd}
\]
Here, $(a,b,c_z,d_{\tilde z})$ is sent to $(a-\phi_{z}(c),\res_G^{I_z}(b),d)$ where $z=\nu(\tilde z)$ and $\res_G^{I_z}$ denotes the composite map $\Homcr(G,M^s)\to\Homcr(I_z,M)^s\to \Homcr(I_z/P_z,M^s_{P_z})$ defined in \Cref{lem:secIP}.
Then the transition map of the top complex is $(a,b,c_z,d_{\tilde z})\mapsto (\partial_M(a),\partial_M(b)+(-1)^s\partial_G(a),\partial_\F(c_z),\partial_M(d_{\tilde z})+(-1)^s\res_G^{I_z}(b))$. The transition map of the bottom complex is $(a,b,c)\mapsto (\partial_M(a),\partial_M(b)+(-1)^s\partial_G(a),\partial_M(c)+(-1)^s\partial_G(b))$.
\end{theorem}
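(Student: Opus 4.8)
The plan is to assemble $\RG(X,\F^\bullet)$ from three pieces glued by a recollement triangle, and then replace each piece by an explicit complex of $\Lambda[G]$- or $\Lambda[I_{\tilde z}/P_{\tilde z}]$-modules using the results of Sections \ref{sec:cover} and \ref{sec:explicit}. Concretely, for the open immersion $j\colon U\to X$ with closed complement $i\colon Z\to X$, the triangle $j_!j^\star\F\to \F\to i_\star i^\star\F\xrightarrow{+1}$ gives, after applying $\RG(X,-)$ and using $\RG(X,i_\star i^\star\F)=\bigoplus_{z\in Z}\F_z^\bullet$ (a complex concentrated in cohomological degree matching $\F$) and $\RG(X,j_!\LL)$, a triangle relating $\RG(X,\F)$ to $\RG(X,j_!\LL)$ and $\bigoplus_z\F_z^\bullet$. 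Separately, the triangle $j_!\LL\to \Rj_\star\LL\to i_\star\R i^\star\Rj_\star\LL$ — equivalently the localisation triangle $\RG_Z(X,\Rj_\star\LL)\to \RG(X,\Rj_\star\LL)\to \RG(U,\LL)\xrightarrow{+1}$ twisted to handle $j_!$ — identifies $\RG(X,j_!\LL)$ with the cone of $\RG_Z(X,\Rj_\star\LL)\to \RG(X,\Rj_\star\LL)$ shifted appropriately, and \Cref{cor:RGZ} already gives $\RG_Z(X,j_\star\LL)$ explicitly as $\bigoplus_{\tilde z}\HH^1(I_{\tilde z},M)$ in degree $2$. The upshot is that $\RG(X,\F)[1]$ should be the cone of a map whose source packages $\RG(X,\Rj_\star\LL)$ together with the $\F_z^\bullet$ correction terms and the support terms, and whose target packages the "local" data at each $\tilde z\in\tZ$; this matches the shape in the statement.

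**Next I would** make each piece explicit. For $\RG(U,\LL)=\RG(X,\Rj_\star\LL)$, note $j_\star\LL$ on $X$ is locally constant away from $Z$ and $\cd(X)\le\cd(U)\le 2$ but more to the point $U$ is an affine curve over an algebraically closed field, so $\cd(U)\le 1$; then \Cref{prop:H1cd1} applied to $V\to U$ (whose existence is exactly the three bullet hypotheses) represents $\RG(U,\LL^\bullet)$ by the total complex of $B^{i,j}=\Hom_{\Lambda[G]}(\tau_{\geqslant-1}P_G^{-j}(\Lambda),M^i)$, i.e. in each degree $M^s\oplus\Homcr(G,M^{s-1})$ — the first two summands of the top row. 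For the local terms at $\tilde z$: the stalk $(\Rj_\star\LL)_{\tilde z}$ on $\tX$ computes $\R\Gamma(\eta_{\tilde z},M)=\RG(I_{\tilde z},M)$, and \Cref{cor:cohI} (using that $n\mid e$, the third bullet) identifies this with $\tau_{\leqslant1}\RG(I_{\tilde z}/P_{\tilde z},M^{P_{\tilde z}})$ — degrees $M^s$, $\Homcr(I_{\tilde z}/P_{\tilde z},M_{P_{\tilde z}}^{s-1})$, $\HH^1(I_{\tilde z}/P_{\tilde z},M_{P_{\tilde z}}^{s-2})$, which is the bottom row. The bottom complex's differential is the standard one on $\Tot$ of $\Hom_{\Lambda[I_{\tilde z}/P_{\tilde z}]}(P(\Lambda),M^\bullet)$, hence the formula with $\partial_M,\partial_G$; the middle $\HH^1$-summand appearing twice on the bottom reflects that $\RG_{\tilde z}(\tX,\Rj_\star\LL)[-1]$ and the stalk of $\Rj_\star\LL$ share the $\HH^1(I_{\tilde z},M)$ term, so the cone carrying the localisation map has it in two adjacent positions. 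The support summand $\bigoplus_{\tilde z}\HH^1(I_{\tilde z}/P_{\tilde z},M_{P_{\tilde z}}^{s-2})$ in the top row is precisely $\RG_Z(X,j_\star\LL)$ from \Cref{cor:RGZ}, placed in degree $2$, relabelled via the $I/P$-description.

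**Then I would** pin down the connecting maps. The map $\F_z^s\to M^s$ (or rather the adjunction component) is $\phi_z^s$, the stalk at $z$ of the adjunction unit $\F^s\to j_\star\LL^s$, composed with the generization $\F_z\to \F_{\bar\eta}=M$ — this is the $a-\phi_z(c)$ entry. The map $\Homcr(G,M^\bullet)\to\Homcr(I_z/P_z,M_{P_z}^\bullet)$ is exactly $\res_G^{I_z}$ from \Cref{lem:secIP}: restrict a crossed homomorphism on $G$ to the inertia subgroup $I_z\subset G$, then push to $I_z/P_z$ using that section. The third component of the source, $\bigoplus_{\tilde z}\HH^1(I_{\tilde z}/P_{\tilde z},M_{P_{\tilde z}}^{s-2})=\RG_Z$, maps identically onto the corresponding $\HH^1$-summand of the target; this is why the differential of the top complex has the extra term $-\res_G^{I_z}(b)$ in its last slot (the support complex is glued to the rest via the boundary map of the localisation triangle, which involves the same restriction). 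Finally one checks the two candidate complexes really are complexes (i.e. $d^2=0$, using $\partial_M^2=0$, $\partial_G^2=0$, and the compatibility $\partial_G\partial_M=\partial_M\partial_G$ up to the sign conventions baked into $\Tot$) and that the displayed vertical map is a chain map, which is a bookkeeping check once the identifications above are fixed.

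**The main obstacle** I expect is not any single cohomology computation — those are all supplied by \Cref{prop:H1cd1}, \Cref{cor:cohI}, \Cref{cor:RGZ} and \Cref{lem:secIP} — but the coherent bookkeeping: namely showing that the two localisation/recollement triangles (the $j_!\to\F\to i_\star i^\star$ one and the $\RG_Z\to\RG(X,\Rj_\star)\to\RG(U)$ one) can be spliced into a single cone diagram with the stated explicit representatives, with all boundary maps matched up on the nose (not merely in the derived category) and all signs consistent with the $\Tot$ conventions. In particular, one must verify that the quasi-isomorphism $\tau_{\leqslant1}\RG(I_{\tilde z}/P_{\tilde z},M^{P_{\tilde z}})\to\RG(I_{\tilde z},M)$ of \Cref{cor:cohI} can be chosen functorially in the complex $M^\bullet$ and compatibly with the restriction $G\to I_{\tilde z}$ — this is where \Cref{lem:secIP}, providing an explicit section at the level of crossed homomorphisms rather than just on cohomology, is essential, and checking that the section is compatible with the differentials $\partial_M$ is the delicate point. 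Once that functorial compatibility is in hand, the cone is forced and the theorem follows by comparing abutments of the relevant spectral sequences exactly as in the proof of \Cref{prop:H1cd1}.
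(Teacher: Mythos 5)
Your overall architecture --- recollement of $X$ into $U$ and $Z$ plus Propositions \ref{prop:H1cd1} and \ref{cor:cohI}, \Cref{cor:RGZ} and \Cref{lem:secIP} for the explicit pieces --- is the paper's, and your reading of the top complex is correct, including the attribution of the summand $\HH^1(I_{\tilde z}/P_{\tilde z},M^{s-2}_{P_{\tilde z}})$ to $\RG_Z(X,j_\star\LL)$ placed in degree $2$. The genuine gap is your identification of the bottom complex, and it stems from conflating derived and non-derived $j_\star$. The paper's starting point is the short exact sequence of honest (non-derived) complexes of sheaves $0\to\F\to j_\star\LL\oplus i_\star i^\star\F\to i_\star i^\star j_\star\LL\to 0$; accordingly, the bottom row of the theorem represents $\RG(X,i_\star i^\star j_\star\LL)=\bigoplus_{\tilde z}M^{I_{\tilde z}}[0]$, deliberately padded to three terms so that the map from the top row can be written strictly. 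Its $\HH^1$ and $\HH^2$ vanish: the second differential is the quotient map onto $\HH^1(I_{\tilde z}/P_{\tilde z},M_{P_{\tilde z}})$, and the image of the first differential is exactly the group of principal crossed homomorphisms, i.e.\ the kernel of that quotient. It is therefore \emph{not} a representative of $\tau_{\leqslant 1}\RG(I_{\tilde z}/P_{\tilde z},M^{P_{\tilde z}})\simeq\RG(I_{\tilde z},M)=(\R j_\star\LL)_{\tilde z}$, whose $\HH^1$ equals $\HH^1(I_{\tilde z},M)$ and is nonzero in general, as you assert.

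This matters because your two identifications are mutually inconsistent. If the source of the cone already contains $\RG_Z(X,j_\star\LL)=\bigoplus_{\tilde z}\HH^1(I_{\tilde z},M)[-2]$ --- so that the source is all of $\RG(X,j_\star\LL)\oplus\bigoplus_z\F_z$ --- then the target must be the non-derived stalk $\bigoplus_{\tilde z}M^{I_{\tilde z}}[0]$. If instead the target is the derived stalk $\bigoplus_{\tilde z}\RG(I_{\tilde z},M)$, then the source must be only $\RG(U,\LL)\oplus\bigoplus_z\F_z$ (this is the recollement fibre sequence $\F\to\R j_\star\LL\oplus i_\star i^\star\F\to i_\star i^\star\R j_\star\LL$, which yields a correct but strictly smaller complex than the one in the statement). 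Mixing the two, as your plan does, double-counts $\HH^1(I_{\tilde z},M)$ and the resulting cone is no longer $\RG(X,\F)[1]$. Relatedly, your ``equivalently'' clause is false: $\RG_Z(X,\R j_\star\LL)=\RG(Z,i^!\R j_\star\LL)=0$, which is precisely why the paper works with the non-derived $j_\star\LL$, for which \Cref{cor:RGZ} gives a nonzero answer concentrated in degree $2$. Once the bottom-row identification is corrected, the rest of your plan --- the roles of $\phi_z$ and of $\res_G^{I_z}$ via \Cref{lem:secIP}, and the chain-level bookkeeping --- goes through essentially as the paper does it.
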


\begin{cor} When computing the cohomology of a single constructible sheaf $\F$ on $X$ with generic geometric fibre $M$, the complex $\RG(X,\F)[1]$ is the cone of the following morphism of complexes:
\[
\begin{tikzcd}
(\oplus_{z\in Z}\F_z) \oplus M \arrow[r,"{(0,\partial_G)}"]\arrow[d,"{((\phi_z-\id)_{\tilde z\to z})_z}"] & \Homcr(G,M)\arrow[r,"{(\res_G^{I_{\tilde z}})_{\tilde z}}"]\arrow[d,"{(\res_G^{I_{\tilde z}})_{\tilde z}}"] & \oplus_{\tilde z\in\tilde Z}\HH^1(I_{\tilde z}/P_{\tilde z},M_{P_{\tilde z}})\arrow[d,"{\id}"] \\
\oplus_{\tilde z\in\tilde Z} M \arrow[r,"{\partial_{I_z}}"] &\oplus_{\tilde z\in\tilde Z}\Homcr(I_{\tilde z}/P_{\tilde z},M_{P_{\tilde z}}) \arrow[r] & \oplus_{\tilde z\in\tilde Z}\HH^1(I_{\tilde z}/P_{\tilde z},M_{P_{\tilde z}})
\end{tikzcd}
\]
\end{cor}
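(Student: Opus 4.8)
The plan is to read off this statement as the special case of \Cref{th:RGamma} in which the complex $\F^\bullet$ is concentrated in a single degree. Concretely, take $\F^0=\F$ and $\F^s=0$ for $s\neq 0$, so that $\LL^0=\LL=j^\star\F$, $M^0=M=\LL_\bareta$ and $M^s=0$ for $s\neq 0$; in particular the internal differentials $\partial_M$ and $\partial_\F$ vanish, and $\partial_G$ only occurs as a map out of the degree-$0$ term $M$. I would then substitute these data into the two complexes of \Cref{th:RGamma} and simplify, using $\Homcr(G,0)=0$ and $\HH^1(I_{\tilde z}/P_{\tilde z},0)=0$, so that only three consecutive terms of each complex survive. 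The top complex becomes, in degrees $0,1,2$,
\[
(\textstyle\bigoplus_{z\in Z}\F_z)\oplus M,\qquad \Homcr(G,M),\qquad \textstyle\bigoplus_{\tilde z\in\tZ}\HH^1(I_{\tilde z}/P_{\tilde z},M_{P_{\tilde z}}),
\]
and is zero elsewhere; the bottom complex likewise collapses to $\bigoplus_{\tilde z}M$, $\bigoplus_{\tilde z}\Homcr(I_{\tilde z}/P_{\tilde z},M_{P_{\tilde z}})$, $\bigoplus_{\tilde z}\HH^1(I_{\tilde z}/P_{\tilde z},M_{P_{\tilde z}})$. These are exactly the rows of the displayed diagram.

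Next I would check that the maps of \Cref{th:RGamma} specialise to the arrows claimed here. In the top complex the transition formula $(a,b,c_z,d_{\tilde z})\mapsto(\partial_M a,\partial_M b-\partial_G a,\partial_\F c_z,\partial_M d_{\tilde z}-\res_G^{I_z}b)$, with $\partial_M=\partial_\F=0$, reduces to $\pm\partial_G$ from degree $0$ to degree $1$ (and $0$ on the $\F_z$ summands), and to $\pm\res_G^{I_{\tilde z}}$ composed with the quotient $\Homcr\twoheadrightarrow\HH^1$ from degree $1$ to degree $2$; in the bottom complex the corresponding formula gives $\partial_{I_{\tilde z}}$ and the analogous quotient. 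Similarly the vertical morphism $(a,b,c_z,d_{\tilde z})\mapsto(a-\phi_z(c),\res_G^{I_z}(b),d)$ becomes, degree by degree, the map $M\oplus\bigoplus_z\F_z\to\bigoplus_{\tilde z}M$ sending $(a,c_z)$ to $(a-\phi_z c_z)_{\tilde z}$, then $\res_G^{I_{\tilde z}}$, then the identity. Taking the cone of the resulting morphism of three-term complexes and comparing with the shifted bigraded object $\RG(X,\F^\bullet)[1]$ produced by \Cref{th:RGamma} gives precisely the diagram in the statement, the overall signs being irrelevant for a cone.

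There is no real obstacle here: the whole content is already contained in \Cref{th:RGamma}, and the argument is pure bookkeeping. The only points requiring a little care are the index shifts — making sure that the vanishing of $M^s$ for $s\neq 0$ kills exactly the summands absent from the corollary — together with two implicit conventions: that the last map of each row is the passage from $\Homcr(I_{\tilde z}/P_{\tilde z},M_{P_{\tilde z}})$ to its quotient $\HH^1(I_{\tilde z}/P_{\tilde z},M_{P_{\tilde z}})$, and that $\phi_z$ is read as a map into $\bigoplus_{\nu(\tilde z)=z}M$ via the inclusions $M^{I_{\tilde z}}\hookrightarrow M$, so that it matches the source and target written in the diagram.
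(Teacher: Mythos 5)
Your proposal is correct and is exactly the intended argument: the paper gives no separate proof of this corollary, treating it precisely as the specialization of \Cref{th:RGamma} to a complex concentrated in degree $0$, which is the bookkeeping you carry out. Your care about the quotient map $\Homcr(I_{\tilde z}/P_{\tilde z},M_{P_{\tilde z}})\to\HH^1(I_{\tilde z}/P_{\tilde z},M_{P_{\tilde z}})$ and the identification of $\phi_z$ with a map into $\bigoplus_{\nu(\tilde z)=z}M$ matches the conventions set up in the proof of the theorem.
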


\begin{proof}[Proof of the theorem]The functors $j_\star,j^\star,i_\star,i^\star$ can be extended to the (non-derived) category of complexes of constructible sheaves on $X$. These functors will allow us to compute  $\RG(X,\F^\bullet)$ in the following way: first consider a short exact sequence involving $\F^\bullet$ in this category of complexes of constructible sheaves on $X$, then compute the associated distinguished triangle in $\DD^b_c(\Lambda)$. \\

Consider the following short exact sequence of complexes of constructible sheaves on $X$:
\[ 0\to \F^\bullet \to j_\star \mathscr{L}^\bullet\oplus i_\star i^\star \F^\bullet \to i_\star \mathcal{Q}^\bullet \to 0\]
where $\mathcal{Q}^\bullet\coloneqq i^\star j_\star \mathscr{L}^\bullet$. Here, the first map is just the sum of the two adjunction maps $\F^\bullet \to j_\star j^\star \F^\bullet$ and $\F^\bullet \to i_\star i^\star\F^\bullet$, and the second map is the difference of the adjunction maps $j_\star \LL\to i_\star i^\star j_\star \LL$ and $i_\star i^\star \F^\bullet \to i_\star i^\star j_\star j^\star \F^\bullet$.
The object $\RG(X,\F^\bullet)[1]$ of $\DD^b_c(\Lambda)$ is the cone of the morphism \[\RG(X,j_\star \mathscr{L}^\bullet)\oplus\RG(X,i_\star i^\star \F^\bullet)\to \RG(X,i_\star \mathcal{Q}^\bullet).\]

\paragraph{Computing $\RG(X,j_\star \mathscr{L})$} The following distinguished triangle in $\DD^b_c(X,\Lambda)$ \cite[09XP]{stacks}:
\[ \RG_Z(X,j_\star \mathscr{L}^\bullet)\to \RG(X,j_\star \mathscr{L}^\bullet)\to \RG(U,\mathscr{L}^\bullet)\xrightarrow{+1} \]
shows that $\RG(X,j_\star \mathscr{L}^\bullet)[1]$ is the cone of $\RG(U,\mathscr{L}^\bullet)\to \RG_Z(X,j_\star \mathscr{L}^\bullet)[1]$. Let us now turn to the computation of $\RG(U,\mathscr{L}^\bullet)$ and $\RG_Z(X,j_\star \mathscr{L}^\bullet)$. 
According to \Cref{prop:H1cd1}, $\RG(U,\mathscr{L}^\bullet)$ is represented by the total complex associated to the double complex 
\[\begin{tikzcd}
\Homcr(G,M^0)\arrow[r] & \Homcr(G,M^1) \arrow[r]& \Homcr(G,M^2) \arrow[r] & \cdots \\
M^0 \arrow[u] \arrow[r] & M^1\arrow[u]\arrow[r] & M^2 \arrow[u]\arrow[r] & \cdots 
\end{tikzcd} \]
which is 
\[  M^0\to M^1\oplus\Homcr(G,M^0)\to M^2\oplus\Homcr(G,M^1)\to \dots \]
For each integer $s\in \{0,\dots, t\}$, $\RG_Z(X,j_\star \mathscr{L}^s)$ is represented by the one-term complex \[\bigoplus_{\tilde z\in \tilde Z}\HH^2_{\tilde z}(X,\nu^\star j_\star\LL^{s})[-2] \] according to \Cref{lem:cohsuppz}. By \Cref{cor:RGZ} and \Cref{cor:cohI}, $\HH^2_{\tilde z}(X,\nu^\star j_\star\LL^{s})=\HH^1(I_{\tilde z},M^s)=\HH^1(I_{\tilde z}/P_{\tilde z},M^s_{P_{\tilde z}})$. It follows that $\RG(X,j_\star\LL^\bullet)$ is represented by the complex 
\[\adjustbox{max width=\textwidth}{$ M^0\xrightarrow{(\partial_M^0,\partial_G^0)} M^1\oplus\Homcr(G,M^0)\xrightarrow{(\partial_M^1,\partial_G^0-\partial_M^0,\res_G^{I_{\tilde z}})} M^2\oplus\Homcr(G,M^1)\oplus\bigoplus_{\tilde z\in \tilde Z}\HH^1(I_{\tilde z},M^0)\to \dots$} \]

\paragraph{Representing $\RG(X,i_\star i^\star\F^\bullet)$} For each integer $s\in \{0,\dots, t\}$, $\RG(X,i_\star i^\star\F^s)$ is simply represented by the one-term complex $\HH^0(Z,\F^s)[0]$. Therefore $\RG(X,i_\star i^\star\F^\bullet)$ is represented by the complex \[ \bigoplus_{z\in Z}\F^0_z\to\bigoplus_{z\in Z} \F^1_z\to \bigoplus_{z\in Z}\F^2_z\to\cdots\]
whose differentials are those of $\F^\bullet$.

\paragraph{Representing $\RG(X,i_\star \mathcal{Q}^\bullet)$} We know that $\RG(X,i_\star\mathcal{Q}^s)=\RG(X,i_\star i^\star j_\star\LL^s)$ is represented by the one-term complex $\bigoplus_{z\in \tZ}\HH^0(I_z,M^s)$ concentrated in degree zero. However, in order to be able to express the map $\RG_Z(X,j_\star\LL)\to\RG(X,i_\star\mathcal{Q}^s)$ as a morphism of complexes, we will write $\RG(X,i_\star \mathcal{Q}^s)$ as the following three-term complex:
\[ \bigoplus_{{\tilde z}\in \tZ}M^s\to \bigoplus_{{\tilde z}\in\tZ}\Homcr(I_{\tilde z}/P_{\tilde z},M^s_{P_{\tilde z}})\to \bigoplus_{{\tilde z}\in \tZ}\HH^1(I_{\tilde z}/P_{\tilde z},M^s_{P_{\tilde z}})\]
whose first differential is the one sending $(m^s_{\tilde z})_{\tilde z}$ to $(g_{\tilde z}\mapsto (g_{\tilde z}\cdot m_{\tilde z}^s-m_{\tilde z}^s))_{\tilde z}$, and the second one is the usual quotient map.
By the same arguments as above, $\RG(X,i_\star \mathcal{Q}^\bullet)$ is thus represented by the complex
\[\adjustbox{max width=\textwidth}{$ \bigoplus_{{\tilde z}\in \tZ}M^0\to \bigoplus_{{\tilde z}\in\tZ}(M^1\oplus\Homcr(I_{\tilde z}/P_{\tilde z},M^0_{P_{\tilde z}}))\to \bigoplus_{{\tilde z}\in \tZ}(M^2\oplus \Homcr(I_{\tilde z}/P_{\tilde z},M^1_{P_{\tilde z}})\oplus\HH^1(I_{\tilde z}/P_{\tilde z},M^0_{P_{\tilde z}}))\to\cdots$}\]

\paragraph{Putting everything together} We have now computed each term of the morphism of complexes
\[\RG(X,j_\star \mathscr{L}^\bullet)\oplus\RG(X,i_\star i^\star \F^\bullet)\to \RG(X,i_\star \mathcal{Q}^\bullet)\]
whose cone is $\RG(X,\F^\bullet)$. The morphism itself is the difference of the two adjunction maps.
\end{proof}

\begin{rk}\label{rk:fieldext}
A cover $V\to U$ as in the theorem may be computed in the following way: consider an étale Galois cover $W\to U$ such that each $\LL^i|_W$ is constant, and set $V=W\nt$.
While $W\nt$ is the canonical choice, we may for complexity reasons choose any of its subcovers that still satisfy the three required properties listed above. Here is one example of such a subcover. The action of $\Gk$ on $\HH^1(U,\LL^\bullet)$ factors through a finite quotient $\Gal(k_1|k_0)$. The image of $\HH^1(U,\LL^\bullet)$ in $\HH^1(W,\Lambda)$ is still defined over $k_1$, and we may construct a Galois subcover of $W\nt\to U$ by taking $n^{\text{th}}$ roots only of functions that are defined over $k_1$. This will be used in the case of finite fields in section \ref{subsec:ffieldcoh}.
\end{rk}

\subsection{Computing the Galois action}\label{subsec:galnongeomcon}

Let $k_0$ be a perfect field, let $k$ be an algebraic closure of $k_0$ and $\GO=\Gal(k|k_0)$. Consider a curve $X_0$ over $k_0$. For the sake of simplicity, we consider a single sheaf $\F_0$ on $X_0$. The general case of complexes of sheaves is handled in the same way. Let $X=X_0\times_{k_0}k$ and $\F=\F_0|_X$. This section is dedicated to the description of a complex of $\Lambda[\GO]$-modules representing $\RG(X,\F)$. Let $U_0$ be an affine open subset of $X_0$ on which $\F$ is locally constant, and $W_0\to U_0$ be a Galois cover such that $\F_0|_{W_0}$ is constant. Let $Z_0$ be the reduced closed complement of $U_0$ in $X_0$. Denote by $U,W,Z$ their base changes to $k$. Consider the cover $V=W\nt$, and the complex representing $\RG(X,\F)$ computed above using $V$.

\paragraph{The easy case: when $W_0$ has a rational point $w_0$} In that case, $W$ is connected. The terms of $\RG(X,\F)$ consist of cohomology groups of $G=\Aut(V|U)$ or $I_z/P_z$, where $z$ is a closed point at infinity of $U$, with values in (a suitable quotient of) $M$. Understanding the action of $\GO$ on $G$ is straightforward: let $\bar w$ be a geometric point of $W$ whose image in $W$ is $w_0$, let $\bar u$ be its image in $U$ and  $\bar v$ a preimage of $\bar w$ in $V=W\nt$. The group $\GO$ acts on $\pi_1(W,\bar w)\subset \pi_1(U,\bar u)$ by functoriality, and since $W\nt\to W$ is characteristic, this action restricts to $\pi_1(V,\bar v)$. 
Moreover, for $z\in X(k_0)$, the group $\GO$ acts naturally on $I_z/P_z$, which is canonically isomorphic to $\mu_e(k)$, where $e$ is the ramification index of $V\nt\to U$ above $z$. Finally, the group $M^{P_z}$ does not depend on the choice of preimage of $Z$ in the compactification of $V\nt$.
For a point $z$ defined over an extension $k_1$ of $k_0$, we need to consider its Galois orbit $T$: then $\GO$ acts naturally on $\oplus_{t\in T}\RG(I_t/P_t,M^{P_t})$, where the preimages of $t$ whose inertia groups we compute have been chosen in the same $\GO$-orbit. These considerations allow us to compute the action of $\GO$ on each of the terms of the complex representing $\RG(X,\F)$.

\paragraph{The general case} In general, $W$ need not be connected, and the function field $k_0(W_0)$ may contain a finite nontrivial Galois extension of $k_0$. Section \ref{subsec:X2gal} shows how to construct a Galois cover $V_0\to U_0$ whose function field contains a sufficiently large Galois extension $L$ of $k_0$ over which the elements of $\HH^1(W,\mu_n)$ as well as the points at infinity of $V\coloneqq V_0\times_{k_0}k$ are defined. This ensures that any closed point of the smooth compactification of $V_0$ above a point of $Z_0$ is exactly $L$. 
Consider a connected component $V_c$ of $V$. The group $G_c\coloneqq\Aut(V_c|U)=\ker(\Aut(V_0|U_0)\to \Gal(L|k_0))$ is the stabiliser of $V_c$ in $G\coloneqq\Aut(V_0|U_0)$. 
Set $M_0= \HH^0(V_0,\F_0)$ and $M= \HH^0(V,\F)$; then $M$ is the induced representation $\ind_{G_c}^G (M_0)$, and Shapiro's lemma \cite[Th. 4.19]{neukirch_cft} shows that the map \[ \tau_{\leqslant 1}\RG(G_c,M_0)\to \tau_{\leqslant 1}\RG(G,M)\]
in $\DD^b_c(\Lambda)$ is a quasi-isomorphism. As an abelian group, $M=M_0^d$ where $d$ is the number of connected components of $V$. The group $\GO$ acts naturally on $M$ as it does on set the connected components of $V$, in a way that is compatible with its action on $G$. This allows to compute the action of $\GO$ on $\tau_{\leqslant 1}\RG(G,M)$. 
Let $z\in Z_0$ be a closed point with residue field $k_1$. 
Let $v$ be a preimage of $z$ in $V_0$. Denote by $T\coloneqq \{\tau(z),\tau\in \Gal(k_1|k_0)\}$ the $\GO$-orbit of $z$. For each $\tau\in \Gal(k_1|k_0)$, consider a preimage $v_{\tau}$ of $\tau(z)$ in $V_0$; we choose the $v_{\tau}$ to be in the same $\GO$-orbit.
Let $D_\tau\triangleleft G$ be the decomposition group of $v_{\tau}$. The map $D_\tau\to \Gal(L'|k_0)$ is surjective, and its kernel is the inertia group $I_\tau$ of $v_{\tau}$, so that $M=\ind_{I_\tau}^{D_\tau}M_0$. Shapiro's lemma now ensures that \[ \RG(D_\tau,M)=\RG(I_\tau,M_0)=\RG(I_\tau/P_\tau,M_0^{P_\tau}) \] in $\DD^b_c(\Lambda)$. The group $\GO$ acts naturally on the complex $\bigoplus_{\tau}\RG(D_\tau,M)$, whose cohomology groups are $\HH^0(T,j_\star\F|_U)$ and $\HH^2_T(X,j_\star\F|_U)$. Using these notations, $\RG(X,\F)[1]$ is isomorphic in $\DD^b_c(\Lambda[\GO])$ to the cone of the following morphism of complexes:
\[
\begin{adjustbox}{width=\textwidth}{
\begin{tikzcd}
 0\arrow[r] & M\oplus \bigoplus_{T} \HH^0(T,\F) \arrow[d,"\bigoplus_{T,\tau}(\res_{I_\tau}^G-\phi_z)"] \arrow[r,"{(\partial_G,0)}"] & \Homcr(G,M) \arrow[d,"\bigoplus_{T,\tau} \res_{I_\tau}^G"]\arrow[r] & \bigoplus_{T}\bigoplus_{\tau}\HH^1(I_\tau/P_\tau,M_{P_\tau}) \arrow[d,"\id"]\arrow[r] & 0 \\
 0 \arrow[r]& \bigoplus_{T}\bigoplus_{\tau}M_{P_\tau} \arrow[r,"\bigoplus_z\partial_{I_\tau}"] \arrow[r] & \bigoplus_{T}\bigoplus_{\tau}\Homcr(I_\tau/P_\tau,M_{P_\tau}) \arrow[r] & \bigoplus_{T}\bigoplus_{\tau}\HH^1(I_\tau/P_\tau,M_{P_\tau}) \arrow[r] & 0
\end{tikzcd}
}
\end{adjustbox}
\]
where $T$ runs through the $\GO$-orbits in $Z$, and $\tau$ runs through the $k_0$-automorphisms of the residue field of the closed points of $T$.

\subsection{Functoriality over $\Spec k$}

Consider a morphism $\phi\colon X'\to X$ of integral curves over an algebraically closed field $k$. As usual, let $n$ be an integer invertible in $k$, and denote by $\Lambda$ the ring $\ZZ/n\ZZ$. Let $\F$ be a constructible sheaf of $\Lambda$-modules on $X$. Here is how to compute a morphism of complexes of $\Lambda$-modules representing $\RG(X,\F)\to \RG(X',\phi^\star\F)$. Let $U$ be an affine open subset of $X$ on which $\F$ is locally constant. Let $W\to U$ be an étale Galois cover such that $\F|_W$ is constant, and $W'\to U'$ be the Galois closure of a connected component of $W\times_X X'$. Consider the Galois covers $V=W\nt\to W$ and $V'=(W')\nt\to W'$. Given the construction of $V$ and $V'$, the map $\HH^1(W,\mu_n)\to\HH^1(W',\mu_n)$ defines a map $V'\to V$.

By elementary Galois theory, there is a map $\Aut(V'|U')\subseteq \Aut(V'|U)\to\Aut(V|U)$. For each point $z\in X-U$, choose a preimage $z_V$ of $z$ in the smooth compactification of $V$. For each preimage $z'$ of $z$ in $X'$, consider a preimage $z_V'$ of $z'$ in the smooth compactification of $V'$ whose image in $V$ is $z_V$. Consider the inertia groups $P_V\triangleleft I_V\subseteq\Aut(V|U)$ of $z_V$ and $P_{V'}\triangleleft I_{V'}\subseteq\Aut(V'|U')$ of $z_V'$. The map $\Aut(V'|U')\to\Aut(V|U)$ induces for each choice of $z,z',z_V,z_V'$ a map $I_{V'}/P_{V'}\to I_V/P_V$. The functoriality of the bar resolution thus allows to compute the maps $\RG(\Aut(V'|U'),M)\to\RG(\Aut(V|U),M)$ and $\RG(I_V/P_V,M^{P_V})\to\RG(I_{V'}/P_{V'},M^{P_{V'}})$ that are needed to compute the morphism of complexes representing $\RG(X,\F)\to\RG(X',\phi^\star\F)$.

\subsection{Cohomology of curves over a field of cohomological dimension 1}

Consider a perfect field $k_0$ of cohomological dimension 1 such that $\HH^1(k_0,\ZZ/n\ZZ)$ is finite, e.g. a finite field or the fraction field of a strictly henselian local DVR. Let $n$ be an integer invertible in $k_0$. Denote by $\Lambda$ the ring $\ZZ/n\ZZ$. Let $X_0$ be an integral curve over $k_0$, and $\F_0^\bullet$ be a complex of constructible sheaves of $\Lambda$-modules on $X_0$. Denote by $k$ the algebraic closure of $k_0$, by $\GO$ the Galois group $\Gal(k|k_0)$ and by $X,\F$ the respective base changes of $X_0,\F_0$ to $k$. \Cref{th:RGamma} shows how to compute a complex $M^\bullet$ of $\GO$-modules representing $\RG(X,\F^\bullet)$. Recall that \[ \RG(X_0,\F_0^\bullet)=\RG(\GO,\RG(X,\F^\bullet)).\]
We described in section \ref{subsec:cd1} how to determine a complex representing this object. Let $k_1$ be a Galois extension of $k$ such that the action of $\GO$ on $\RG(X,\F^\bullet)$ factors through $\Gal(k_1|k_0)$. Consider the extension $k_1\nt$ of $k_0$ with Galois group $\HH^1(k_1,\Lambda)^\vee$, and the Galois group $G=\Gal(k_1\nt|k_0)$.
Then $\RG(X_0,\F_0)$ is represented by the total complex associated to the double complex $B^{i,j}=\Hom_{\Lambda[G]}(\tau_{\geqslant -1}P_{G}^{-j}(\Lambda),M^i)$, where $P_G$ is the usual projective resolution of $\Lambda$ as a $\Lambda[G]$-module.

\begin{rk} The same method also applies in theory to the general case where $\HH^1(k_0,\Lambda)$ is infinite, using continuous group cohomology and $\Lambda[[G]]$-modules; one particularly interesting case to consider would be when $k_0$ is the function field of a curve over an algebraically closed field. However, to go any further in practical computations, one is quickly confronted with the issue of computing a generating set of $\HH^1(k_0,\ZZ/n\ZZ)\simeq k_0^\times/(k_0^\times)^n$.
\end{rk}

\section{Algorithmic aspects}\label{sec:algorithmic}

In this whole section, $n$ is a positive integer and $\Lambda$ denotes the ring $\ZZ/n\ZZ$. 

\subsection{Representing curves and sheaves}\label{subsec:rep}

\paragraph{Representing curves} A smooth projective curve over a field $k_0$ is defined by a (possibly singular) plane model given by a polynomial in two variables. When working with a closed subscheme of a smooth curve, we may always suppose that its image in the plane model is nonsingular. Such a closed subscheme is defined by equations; an open subscheme is defined by its closed complement. A morphism of smooth curves is given by a morphism of plane models, i.e. by two polynomials in two variables. The only time we need to work with rational points is when considering the geometric points in a given closed subscheme $Z_0$ of the curve; in that case, we may replace $k_0$ with a finite extension over which these points are defined. This extension has degree bounded by the number $r$ of geometric points in $Z_0$, and passing to this extension has no impact on the complexity estimates given below, which are all at least polynomial in $r$. 
A curve $X$ with multicross singularities is defined by its normalisation $\tX$, as well as the subsets of points of $\tX$ that have the same singular image in $X$; again, we suppose that these subsets of $\tX$ have a nonsingular image in the plane model. 

\paragraph{Representing sheaves} Let $X$ be a smooth curve over an algebraically closed field $k$ of characteristic prime to $n$. A constructible sheaf of $\Lambda$-modules on the étale site of $X$ will be described by the following gluing data, which defines it uniquely \cite[II, Th. 3.10]{milneEC}: \begin{itemize}[label=$\bullet$]
\item a closed 0-dimensional subscheme $Z$ of $X$, defined by an equation;
\item a finite locally constant sheaf $\LL$ on the open complement $U$ of $Z$, defined by a Galois cover $V\to U$ and the action of the group $G=\Aut(V|U)$ on the $\Lambda$-module $M=\HH^0(V,\LL)$;
\item for each point $z\in Z$, the $\Lambda$-module $\F_z$ defined by generators and relations;
\item for each point $z\in Z$, the gluing morphism $\phi_z\colon \F_z\to (j_\star\LL)_z=\HH^0(I_z,M)$, where $I_z\subset G$ is the stabiliser of a preimage of $z$ in $V$.
\end{itemize}
This data also allows to represent morphisms and direct sums of sheaves in a very straightforward manner, as well as to compute tensors products and $\Hom$-sheaves; see \cite[§III.4]{mathese} for more details. While this representation of constructible sheaves might not be the first that comes to mind, it is well suited to our computation of cohomology groups. The usual ways of defining constructible sheaves (as cokernel of $f_!\Lambda\to g_!\Lambda$ with $f,g$ étale or kernel of $f_\star \Lambda\to g_\star\Lambda$ with $f,g$ finite) also admit an algorithmic representation, which can be converted into this one (see \cite[§III.3]{mathese}).

\subsection{Computing the cohomology of $\mu_n$: existing algorithms}\label{subsec:exalg}

Our methods rely on existing algorithms which, given a smooth integral curve $X$ over an algebraically closed field $k$ of characteristic prime to $n$, compute $\HH^1(X,\mu_n)$. Recall that we need to be able to compute it even for affine curves, which can prove to be a bit trickier than in the projective case.

The most efficient algorithm computing $\HH^1(X,\mu_n)$, developed by Couveignes, only applies to projective curves over finite fields, and actually requires prior knowledge of the characteristic polynomial of the Frobenius endomorphism of $X$; since it makes use of some properties of the Frobenius and the group structure of $\Pic^0(X)[n]$, adapting it to the cohomology of affine curves, or of curves over other types of fields does not seem easy. Given a curve of genus $g$ over $\FF_q$, described by an ordinary plane model of degree $d$, it computes $\Pic^0(X)(\FF_q)[n]$ in time polynomial in $d, g,\log q, n$ \cite[Th. 1]{couveignes_linearizing}. 

While it was also first described only for projective curves over finite fields, Huang and Ierardi's method \cite{huang_counting} applies to more general settings. Their algorithm constructs an affine scheme whose points correspond to divisors $D$ such that $nD$ is the divisor of a rational function, and then finds a point in each irreducible component of this scheme, which is enough to find a representative of every $n$-torsion class in $\Pic^0(X)$. This strategy readily adapts to the computation of division by $n$ in $\Pic^0(X)$, thus allowing to compute the cohomology of $\mu_n$ on an open subset of $X$. It is also independent of the chosen base field. The complexity of their algorithm, computed when the base field is $\FF_q$, is polynomial in $n^g,n^d,\log q$ \cite[Prop. 4.3.3]{mathese}.

In the remainder of this article, we will denote by $\mathsf{H1Const}(k_0,n,d,g,r)$ or simply $\mathsf{H1Const}(X,n)$ the complexity of the computation of $\HH^1(X,\mu_n)$, where $X$ is a smooth integral curve of genus $g$ over $k$, given by a degree $d$ polynomial in $k_0[x,y]$, with $r$ points at infinity. We will also denote by $\mathsf{Root(k_0,n,d)}$ the complexity of computing an $n^{\text{th}}$ root of an element in a degree $d$ extension of $k_0$. 

\subsection{Computing in $\HH^1(X,\mu_n)$}

Let $U$ be a smooth integral curve over an algebraically closed field $k$ of characteristic prime to $n$. Let $X$ be the smooth compactification of $U$. Let $n$ be an integer invertible in $k$, and $\Lambda$ be the ring $\ZZ/n\ZZ$.  Denote by $P_0,\dots,P_{r}$ the points of $X-U$. Recall that the elements of $\HH^1(U,\mu_n)$ are equivalence classes $[D,f]$ of pairs where $f$ is a rational function on $U$ such that $\div_U(f)=nD$. The class $[D,f]$ is trivial in $\HH^1(U,\mu_n)$ precisely when $f$ is an $n^{\text{th}}$ power. The sum $[D,f]+[D',f']$ is defined by $[D+D',ff']$.

Here is how to compute the coordinates of an element of $\HH^1(U,\mu_n)$ in a given basis using the Weil pairing, as is done in the projective case in \cite[§8]{couveignes_linearizing}. The Weil pairing \[ e_n\colon \HH^1(U,\mu_n)\times \HH^1_c(U,\mu_n)\to \mu_n\]
is nondegenerate. In this context, \[ \HH^1_c(U,\mu_n)= \frac{\{(D,f)\in \Div(U)\times k(X)^\times\mid nD=\div(f), f(P_0)=...=f(P_r)=1\}}{\{(D,f^n)\text{ where }f(P_0)=...=f(P_r)=1\}}\]
sits in the short exact sequence 
\[ 0\to \frac{\mu_n(k)^r}{\mu_n(k)}\to \HH^1_c(U,\mu_n)\to \HH^1(X,\mu_n)\to 0\]
and may be computed in the following way. Choose a primitive $n^{\text{th}}$ root of unity $\zeta\in k$. For each $i\in \{1\dots r\}$, consider a function $g_i\in k(X)$ such that $g_i(P_0)=\zeta^{-1}$, $g_i(P_i)=\zeta$, and $g_i(P_j)=1$ for all $j\neq 0,i$. Then $([\div(g_1),g_1^n],\dots,[\div(g_{r},g_r^n])$ is a basis of the image of $\mu_n(k)^r/\mu_n(k)$ in $\HH^1_c(U,\mu_n)$. The Weil pairing is computed as usual: given $v=[D,f]\in \HH^1(U,\mu_n)$ and $w=[E,g]\in \HH^1_c(U,\mu_n)$ where $f$ and $g$ are suitably normalised, \[ e_n(v,w)=\frac{f(E)}{g(D)}.\]

\begin{algorithm}[H]\label{alg:coord}
\SetAlgoLined  
\caption{\textsc{CoordinatesInBasis}}
\KwData{Smooth integral curve $U$ over alg. closed field $k$, smooth compactification $X$ of $U$\\
The points $P_0,\dots,P_{r}$ of $X-U$\\
Positive integer $n$ invertible in $k$\\
Basis $B=(v_1,\dots,v_{2g+r})$ of $\HH^1(U,\mu_n)$, where $v_i=(D_i,f_i)\in \Div(U)\times K^\times$ and $([v_1],\dots,[v_{2g}])$ is a basis of $\HH^1(X,\mu_n)$  \\
Element $v_0\in \HH^1(U,\mu_n)$ represented by $(D_0,f_0)\in \Div(U)\times K^\times$\\
Primitive $n^{\text{th}}$ root of unity $\zeta\in k$}
\KwResult{Coordinates $(\alpha_1,\dots,\alpha_{2g+r})\in\Lambda^{2g+r}$ of $v$ w.r.t. $B$\\
Function $h\in k(X)^\times$ such that $f=h^nf_1^{\alpha_1}\dots f_{2g+r}^{\alpha_{2g+r}}$}
\hrulefill \\
\For{$i\in \{1\dots r\}$}{
	Compute function $f_i$ such that $f_i(P_0)=\zeta^{-1}$, $f_i(P_i)=\zeta$, and $f_i(P_j)=1$ for $j\neq 0,i$\\
	Set $v_{1-i}:=(\div(g_i),g_i^n)$
	}
	Compute matrix $M=e_n(v_i,v_j)_{0\leqslant i\leqslant 2g+r, 1-r\leqslant i\leqslant 2g}\in \Mat_{(2g+r+1)\times (2g+r)}(\mu_n(k))$\\
	Compute an element $(1,-\alpha_1,\dots,-\alpha_{2g})\in\ker(M)$: then $v_0=\sum_i{\alpha_i}v_i$ in $\HH^1(U,\mu_n)$\\
	Compute Riemann-Roch space $L$ of $D_0-\sum_i\alpha_iD_i$\\
	Pick $h\in L$: then $D_0-\sum_i\alpha_iD_i=\div(h^{-1})$\\
	Compute $n^{\text{th}}$ root $c\in k$ of $f_0h^n\prod_if_i^{-\alpha_i}\in k$\\
	\Return $\alpha_1,\dots,\alpha_{2g+r}\in \Lambda$ and function $ch\in k(X)^\times$
\end{algorithm}

\begin{lem}
Suppose all of the divisors $D_0,\dots,D_{2g+r}$ are given as difference of two effective divisors of degree $\leqslant m$, and the curve $X$ is given by a plane model of degree $d$. This algorithm returns the coordinates of $v$ in time $\Poly(d,m,g,r)+\mathsf{Root}\left(k_0,n,n^{(2g+r)^2}\right)$.
\begin{proof} Computing the functions $f_i$ using Lagrange interpolation, as well as the matrix $M$ using the definition above, is straightforward. The kernel of $M$ is computed using standard linear algebra techniques over $\Lambda$ (using the isomorphism $\mu_n(k)\to\Lambda$ given by $\zeta\mapsto 1$), which run in polynomial time in the size of $M$. 
\end{proof}
\end{lem}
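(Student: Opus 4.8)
The plan is to first check that \Cref{alg:coord} returns what it claims, and then to bound the cost of each of its steps, the point being that every step but one runs in time polynomial in $d,m,g,r$ (and $n$), while the single $n$-th root at the end accounts for the $\mathsf{Root}$ term. Correctness is forced by the nondegeneracy of the Weil pairing $e_n$ recalled above: $M$ records the pairings of $v_0$ and of the basis $v_1,\dots,v_{2g+r}$ of $\HH^1(U,\mu_n)$ against a basis of $\HH^1_c(U,\mu_n)$, so a relation $v_0=\sum_i\alpha_i v_i$ in $\HH^1(U,\mu_n)$ is equivalent, by perfectness of $e_n$, to the vector $(1,-\alpha_1,\dots,-\alpha_{2g+r})$ lying in the (left) kernel of $M$; such a vector exists and is unique because $(v_i)$ is a basis. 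Given the $\alpha_i$, the divisor $D_0-\sum_i\alpha_i D_i$ is principal of degree $0$, so $f_0h^n\prod_i f_i^{-\alpha_i}$ has trivial divisor and is therefore a constant $\gamma\in k$, and a routine divisor-theoretic check shows the returned coordinates $(\alpha_i)$ and function realise $f_0$ as the prescribed $n$-th-power multiple of $f_1^{\alpha_1}\cdots f_{2g+r}^{\alpha_{2g+r}}$.

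For the timing, the loop producing the auxiliary functions $g_1,\dots,g_r$ and the classes $v_{1-i}=(\div g_i,\,g_i^n)$ only solves interpolation / Riemann--Roch problems with $r+1$ prescribed values on the degree-$d$ plane model, so each $g_i$ may be taken of degree polynomial in $g$ and $r$ and is computed, with its divisor, in time $\Poly(d,g,r)$. Assembling $M$ requires $O((g+r)^2)$ evaluations of the form $f(E)/g(D)$, where the functions have degree $\Poly(d,m,g,r)$ and the divisors have degree $\le m$ (or $\Poly(g,r)$ for the auxiliary classes); each such evaluation is a product of values of a bounded-degree function at the points of a bounded-degree divisor, hence costs $\Poly(d,m,g,r)$, and $M\in\Mat_{(2g+r+1)\times(2g+r)}(\mu_n(k))$ is produced within that bound.

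The linear-algebra steps are standard. Identifying $\mu_n(k)$ with $\Lambda$ via $\zeta\mapsto 1$ costs one discrete logarithm in the order-$n$ cyclic group $\mu_n(k)$ per entry of $M$, and a kernel (Smith normal form) computation over $\ZZ/n\ZZ$ on a matrix of size $O(g+r)$ then returns the $\alpha_i\in\{0,\dots,n-1\}$, all polynomial in $g+r$ and $n$. The divisor $D_0-\sum_i\alpha_i D_i$ is a difference of effective divisors of degree at most $(2g+r+1)(n-1)m$; being principal of degree $0$, its Riemann--Roch space is one-dimensional and is computed by linear algebra over the field of definition of that divisor, in time polynomial in $d,m,g,r,n$, after which picking $h$ and forming $\gamma$ costs the same.

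The only delicate step, and the sole source of the $\mathsf{Root}$ term, is the extraction $c=\sqrt[n]{\gamma}$. Here one bounds $[k':k_0]$, where $k'$ is a field over which both $\gamma$ and $\zeta$ are defined, by writing $k'$ as the compositum of $k_0(\zeta)$ (degree $\le n$) with the fields of definition of the at most $2g+r$ functions $f_i$, $g_i$, $h$ that enter $\gamma$; each of these lies, over $k_0$, in an extension of degree at most $n^{O(g+r)}$, essentially because the relevant data become defined after passing to the cover $\Unt\to U$ (of degree $n^{O(g+r)}$) and adjoining $\zeta$. Taking the compositum of these $O(g+r)$ extensions gives $[k':k_0]\le n^{(2g+r)^2}$, so $c$ is computed in time $\mathsf{Root}(k_0,n,n^{(2g+r)^2})$; as this is the only $n$-th root extracted and every other step was bounded by a polynomial in $d,m,g,r$ (and $n$), the total running time is $\Poly(d,m,g,r)+\mathsf{Root}(k_0,n,n^{(2g+r)^2})$. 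The main obstacle is precisely this last degree bookkeeping: one must check that each of $f_i$, $g_i$, $h$ is defined over an extension of degree only $n^{O(g+r)}$, and that the compositum of the $2g+r$ of them stays of degree at most $n^{(2g+r)^2}$ rather than something larger.
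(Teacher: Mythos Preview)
Your proof follows the same step-by-step cost analysis as the paper's, but is considerably more thorough: the paper's own argument is a three-sentence sketch mentioning only Lagrange interpolation for the $g_i$, the assembly of $M$, and standard linear algebra for its kernel, without addressing correctness, the Riemann--Roch step, or the field-degree bookkeeping behind the $\mathsf{Root}$ term. Your added justifications---nondegeneracy of $e_n$ for correctness, and the compositum/Galois-action argument bounding the field of definition by degree $n^{(2g+r)^2}$---are correct and fill gaps the paper leaves implicit.
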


\subsection{Construction of $\Vnt$}

Let $k$ be an algebraically closed field of characteristic prime to $n$. Let $X$ be an integral smooth projective curve over $k$, and $U$ an affine open subscheme of $X$. Let $V$ be an étale Galois cover of $U$. The following algorithm computes the cover $\Vnt\to V$ defined in section \ref{subsec:1.1}, as well as the group $\Aut(\Vnt|U)$.

\begin{algorithm}[H]\label{alg:X2}
\SetAlgoLined  
\caption{\textsc{nTorsCover}}
\KwData{Galois cover $V\to U$ of smooth integral curves over alg. closed field $k$ \\
Generating set $S$ of $\Aut(V|U)$\\
Integer $n$ invertible in $k$}
\KwResult{Generating set of $\Aut(\Vnt|U)$}
\hrulefill \\
Compute basis $[D_i,f_i]_{1\leqslant i\leqslant s}$ of $\HH^1(V,\mu_n)$ (see section \ref{subsec:exalg})\\
\For{$\sigma\in S$}{
	\For{$i \in \{1\dots s\}$}{
	Compute $\sigma^\star (D_i,f_i)$\\
	Compute $h, \alpha_1,\dots,\alpha_s$ such that $\sigma^\star f_i=h_i^nf_1^{\alpha_1}\dots f_s^{\alpha_s}$ using Algorithm \ref{alg:coord}
	}
	Define $\rho_\sigma\colon (x,y)\mapsto \sigma(x,y), z_j\mapsto h_jz_1^{\alpha_1}\dots z_s^{\alpha_s}$
}
\For{$i \in \{1\dots s\}$}{
	Define $\phi_i\colon (x,y)\mapsto (x,y), z_i\mapsto \zeta z_i, (z_j)_{j\neq i} \mapsto (z_j)_{j\neq i}$
}
\Return $\{ \rho_\sigma\}_{\sigma\in S}\cup\{ \phi_i\}_{1\leqslant i\leqslant s}$
\end{algorithm}

\begin{prop} If $U$ has $r$ points at infinity and $V$ is given by an ordinary plane model of degree $d$, Algorithm \ref{alg:X2} computes a generating set of $\Aut(\Vnt|U)$ in \[ \mathsf{H1Const}(k_0,n,d,(2g+r)[V:U],r[V:U])\]
elementary operations. 
\begin{proof}
The genus of $V$ is bounded by $(2g+r)[V:U]$. The complexity of computing the coordinates of the pullback of the divisors is polynomial-time in $n,[V:U](2g+r),d$, hence dominated by that of computing a basis of $\HH^1(V,\mu_n)$.
\end{proof}
\end{prop}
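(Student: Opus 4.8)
The plan is to bound separately the cost of the two nested loops in Algorithm \ref{alg:X2} and to check that the second loop dominates. First I would recall the setup: $V \to U$ is a Galois cover given by an ordinary plane model of degree $d$, and $U$ has $r$ points at infinity, so by Riemann--Hurwitz the genus $g_V$ of the smooth compactification of $V$ satisfies $2g_V - 2 \leqslant [V:U](2g-2) + (\text{ramification terms})$; counting the $r[V:U]$ points at infinity of $V$ and the ramification contributions crudely, one gets $g_V = O((2g+r)[V:U])$, which is the bound quoted in the statement. This is the genus parameter that enters every subsequent cost estimate.

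Next I would account for the first \textbf{for} loop, over $\sigma \in S$ and $i \in \{1,\dots,s\}$, where $s = 2g_V + r[V:U]$ is the rank of $\HH^1(V,\mu_n)$. Computing the pullback $\sigma^\star(D_i,f_i)$ is a substitution in the plane model of $V$, hence polynomial in $d$, $[V:U]$, and the size of the divisors; and expressing $\sigma^\star f_i$ in the fixed basis is a call to Algorithm \ref{alg:coord}, whose cost was shown above to be $\Poly(d,m,g_V,r[V:U]) + \mathsf{Root}(k_0,n,n^{s^2})$. Since all these divisors can be taken to have degree $m$ polynomially bounded in $g_V$ and $d$, the total work in this loop is $\Poly(n,[V:U](2g+r),d)$ together with the root-extraction overhead, which is the same as what is needed inside \textsc{CoordinatesInBasis} and is therefore not the bottleneck. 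The second \textbf{for} loop, defining the $\phi_i$, is pure bookkeeping of the generators $z_i \mapsto \zeta z_i$ and costs nothing new.

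It then remains to observe that the very first line of the algorithm --- computing a basis $(D_i,f_i)_{1\leqslant i \leqslant s}$ of $\HH^1(V,\mu_n)$ via the algorithms of section \ref{subsec:exalg} --- costs $\mathsf{H1Const}(k_0,n,d,g_V,r[V:U])$, and that this dominates everything else: the polynomial-in-$n,[V:U](2g+r),d$ terms from the coordinate computations are absorbed into it, since $\mathsf{H1Const}$ already subsumes a basis computation and hence at least that much linear algebra. Substituting the bound $g_V = O((2g+r)[V:U])$ into $\mathsf{H1Const}(k_0,n,d,g_V,r[V:U])$ yields the claimed bound $\mathsf{H1Const}(k_0,n,d,(2g+r)[V:U],r[V:U])$. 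The main obstacle, such as it is, lies not in any clever argument but in being careful about the genus and divisor-degree bounds for $V$ so that every invocation of Algorithm \ref{alg:coord} is genuinely polynomial in the stated parameters and is not secretly blown up by the intermediate data; once that is pinned down, the domination by $\mathsf{H1Const}$ is immediate.
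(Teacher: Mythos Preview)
Your proposal is correct and follows essentially the same approach as the paper's proof: bound the genus of $V$ by $(2g+r)[V:U]$ via Riemann--Hurwitz, observe that the coordinate computations in the loop are polynomial in $n,[V:U](2g+r),d$, and conclude that the initial call to $\mathsf{H1Const}$ dominates. The paper's argument is the same two-sentence version of yours; you simply spell out the contributions of each loop and the role of Algorithm~\ref{alg:coord} more explicitly.
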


Here is how, once $G\nt=\Aut(V\nt\to U)$ has been computed, to compute the preimages of points of $X$ in the smooth compactification of $\Vnt$, as well as their inertia group. This is done by considering a suitable explicit model of $\Vnt$. Recall that the function field of $\Vnt$ is obtained from that of $V$ by adjoining $n^{\text{th}}$ roots of functions $f_1,\dots,f_t\in k(V)$. Write $f_i=\frac{g_i}{h_i}$, where $g_i,h_i\in k[x,y]$. Denote by $Z$ (resp. $W$, resp. $W\nt$) the sets of points at infinity of $U$ (resp. $V$, resp. $V\nt$). Consider a point $z\in Z$, and a preimage $w$ of $z$ in $W$. Replacing $f_i$ with $h_i^nf_i$ if necessary, we may suppose $h_i(w)\neq 0$. Then the affine curve given by the equation of $V$ and $h_iz_i^n-g_i$ contains $n^{t-1}$ preimages of $w$, which are nonsingular. Given one of these preimages $w$, the inertia subgroup $I_{w}$ can be computed simply by evaluating the elements of $G\nt$ at $w$.

\begin{algorithm}[H]\label{alg:inertia}
\SetAlgoLined  
\caption{\textsc{InertiaGroup}}
\KwData{Galois cover $V\to U$ of smooth integral curves over alg. closed field $k$, with smooth compactification $Y\to X$
Integer $n$ invertible in $k$\\
Group $\Aut(\Vnt|U)$ and basis $(D_i,f_i=\frac{g_i}{h_i})_{1\leqslant i\leqslant s}$ of $\HH^1(V,\mu_n)$\\
Point $z$ in compactification of $U$, preimage $w$ of $z$ in compactification of $V$\\
}
\KwResult{Preimage $w\nt$ of $w$ in compactification of $\Vnt$\\ 
Generating set of inertia group $I\nt\subset \Aut(\Vnt|U)$ of $w\nt$}
\hrulefill \\
\For{$i\in \{1\dots s\}$}{
	\If{$h_i(w)=0$}{$g_i\leftarrow h_i^{n-1}g_i, h_i\leftarrow 1$}
	Compute root $t_i$ of $h_i(z)T^n-g_i(z)\in k[T]$\\
}	
Set $w\nt=(w,t_1,\dots,t_s)\in \Spec Y[z_1,\dots,z_n]/(h_iz_i^n-g_i)$\\
$I\nt\coloneqq\{ \sigma\in G\nt\mid \sigma(w\nt)=w\nt\}$\\
\Return $w\nt,I\nt$
\end{algorithm}

\begin{lem} Algorithm \ref{alg:inertia} returns a preimage $w\nt$ of $w$ in the smooth compactification of $\Vnt$ and its stabiliser in \[[V:U]\left((n^{2g+r}+(2g+r)\mathsf{Root}\left(k_0,n,n^{([V:U](2g+r))^2}\right)\right)\] elementary operations.
\begin{proof} Computing $w\nt$ requires $[V:U](2g+r)$ computations of $n^{\text{th}}$ roots in $k$. Computing $I\nt$ requires $[V:U]n^{2g+r}$ function evaluations.
\end{proof}
\end{lem}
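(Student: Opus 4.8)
The statement has two parts — correctness of the output $(w\nt,I\nt)$ and the claimed operation count — and the plan is to treat them separately.

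For correctness, I would rely on the explicit model of $\Vnt$ set up just before the algorithm. Once the algorithm has replaced $(g_i,h_i)$ by $(h_i^{n-1}g_i,1)$ whenever $h_i(w)=0$, we may assume $h_i(w)\neq 0$ for every $i$; then the affine scheme defined by the equations of $V$ together with the equations $h_iz_i^n-g_i$ is finite étale over $V$ near $w$, and its fibre over $w$ is exactly the set of tuples $(w,t_1,\dots,t_s)$ with $h_i(w)t_i^n=g_i(w)$. Each such point is regular, hence lies in the smooth compactification of $\Vnt$, so the returned $w\nt$ is a genuine preimage of $w$. As $k$ is algebraically closed, the decomposition and inertia groups of $w\nt$ coincide, and this common group is by definition the stabiliser $\{\sigma\in G\nt\mid\sigma(w\nt)=w\nt\}$ returned by the last line; since the action of $G\nt=\Aut(\Vnt|U)$ on the explicit model is the one produced by Algorithm \ref{alg:X2}, this stabiliser is effectively computable.

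For the operation count, the loop over $i$ runs once per element of a basis of $\HH^1(V,\mu_n)$, and each pass performs a single $n$-th root extraction, of $g_i(w)/h_i(w)$. Using the Riemann--Hurwitz bound on the genus of $V$ together with the bound on its number of points at infinity, the number of basis elements is $O(2g+r)$ and the order $|G\nt|=[\Vnt:U]=[V:U]\cdot n^{\mathrm{rk}\,\HH^1(V,\mu_n)}$ is $O([V:U]\,n^{2g+r})$. Each root is extracted over the finite extension of $k_0$ on which $w$ and the functions $g_i,h_i$ (the torsion data used to build $\Vnt$) are defined; as in the complexity analysis of Algorithm \ref{alg:coord}, one bounds the degree of that extension by $n^{([V:U](2g+r))^2}$, using that the Galois action on $\HH^1(V,\mu_n)$ factors through a linear group over $\Lambda$ whose order is at most $n$ raised to the square of its rank. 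Hence each extraction costs $\mathsf{Root}(k_0,n,n^{([V:U](2g+r))^2})$, which yields the term $[V:U](2g+r)\,\mathsf{Root}(k_0,n,n^{([V:U](2g+r))^2})$. Finally, the last line evaluates $\sigma(w\nt)$ and tests equality with $w\nt$ for each of the $[V:U]\,n^{2g+r}$ elements $\sigma\in G\nt$, each evaluation being a bounded number of field operations, which is the term $[V:U]\,n^{2g+r}$. Summing the two contributions gives the claimed bound.

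I expect the delicate point to be the degree estimate in the previous paragraph: bounding the degree over $k_0$ of the field on which the $n$-th roots — and, via Algorithm \ref{alg:X2}, the automorphisms of $\Vnt$ — must be represented. This rests on the Riemann--Hurwitz genus bound for $V$, the resulting bound on the $\Lambda$-rank of $\HH^1(V,\mu_n)$, and the fact that the splitting field of this torsion module is cut out by a representation into $\GL$ of that rank over $\Lambda$, whose image has order at most $n$ to the square of the rank. Once that bound is in hand, the remaining two points — one root extraction per basis element, one function evaluation per element of $G\nt$ — follow immediately from the structure of the algorithm.
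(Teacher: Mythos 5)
Your proposal is correct and follows essentially the same route as the paper, whose proof is just the two-line count: one $n$-th root extraction per basis element of $\HH^1(V,\mu_n)$ (of which there are $O([V:U](2g+r))$, by the genus bound on $V$), plus one evaluation-and-comparison per element of $G\nt$. The extra material you supply — the étale local model justifying that $w\nt$ is a genuine smooth preimage, and the degree bound $n^{([V:U](2g+r))^2}$ for the field over which the roots live — is not in the paper's proof but is consistent with how the surrounding complexity statements are justified.
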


\subsection{Computation of $\RG$}

Let $k_0$ be a perfect field of characteristic prime to $n$, and $k$ be an algebraic closure of $k_0$. Let $X_0$ be an integral curve over $k_0$ with ordinary singularities, and $X=X_0\times_{k_0}k$. Consider a complex $\F_0^\bullet=[\F_0^0\to\dots\to\F_0^t]$ of constructible sheaves of $\ZZ/n\ZZ$-modules on $X_0$, and set $\F^\bullet\coloneqq (\F_0)^\bullet|_X$. Let $U$ be a smooth open affine subscheme of $X$ such that $\F^\bullet|_U$ is a complex of locally constant sheaves. Let $r$ be the number of points of $X-U$. Let $V\to U$ be an étale Galois cover such that, for each integer $i\in \{0\dots t\}$, the sheaf $\F^i|_U$ is constant. Denote by $G$ the automorphism group of $V\to U$. The following algorithm computes the cohomology complex $\RG(X,\F^\bullet)$ described in \Cref{th:RGamma}.

\begin{algorithm}[H]\label{alg:RGamma}
\SetAlgoLined  
\caption{\textsc{RGamma}}
\KwData{Integral curve $X$ over alg. closed field $k$ \\
Integer $n$ invertible in $k$ \\
Constructible sheaf complex $\F^\bullet$ on $X$ as described in section \ref{subsec:rep}:\\
affine open $U\subset X$ where each $\F^i$ is locally constant,\\
Galois cover $V\to U$ which trivialises $\LL^\bullet\coloneqq \F^\bullet|_U$,\\
Galois group $\Aut(V|U)$ and inertia subgroups $I_z\subset G$ for $z\in X-U$,\\
generic fibres $M^i$ of $\F^i$ with action of $\Aut(V|U)$,\\
fibres $\F_z^i$ for $z\in X-U$,\\
adjunction units $\phi_z^i\colon \F_z^i\to (M^i)^{I_z}$.
}
\KwResult{Complex of $\Lambda$-modules representing $\RG(X,\F)$}
\hrulefill \\
Compute $\Aut(\Vnt|U)$ using Algorithm \ref{alg:X2} \\
Compute inertia subgroups $I_z'\subset \Aut(\Vnt|U)$ using Algorithm \ref{alg:inertia}\\
Using linear algebra, compute $\Homcr(\Aut(\Vnt|U),M^i)$ and $\Homcr(I_z',M^i)$ for $z\in X-U$\\
Compute the morphism $\Psi\colon \RG(X,i_\star i^\star\F^\bullet)\oplus\RG(X,j_\star \LL^\bullet)\to \RG(X,i_\star i^\star j_\star\LL^\bullet)$ of \Cref{th:RGamma}\\
\Return $\cone(\Psi)[-1]$
\end{algorithm}

\begin{theorem} Let $m$ be an integer such that $M$ and the fibres $\F_z^i$, $z\in Z$, $i\in\{0\dots t\}$ are given by at most $m$ generators. Denote by $d$ the degree of an ordinary plane model of $V$. Algorithm \ref{alg:RGamma} computes a complex of $\Lambda$-modules representing $\RG(X,\F)$ in \[\adjustbox{max width=\textwidth}{$\mathsf{H1const}(k_0,n,d,[V:U](2g+r))+\Poly\left((n^{[V:U](2g+r))^2},m,t\right)+[V:U](2g+r)\mathsf{Root}\left(k_0,n,n^{([V:U](2g+r))^2}\right)$}\]
elementary operations. When $k_0=\FF_q$, this number is bounded by \[ \Poly\left(n^{([V:U](2g+r))^2},n^d,m,\log q,t \right).\]
\begin{proof} 
This is just putting together the complexities of the previous algorithms, taking into account that the computation of modules of crossed homomorphisms is done using linear algebra over $\Lambda$.
In order to bound the number in the case of a finite field, we use the complexity of Huang and Ierardi's algorithm to compute $\HH^0(V,\mu_n)$.
\end{proof}
\end{theorem}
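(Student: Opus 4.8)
The plan is to bound the running time of Algorithm \ref{alg:RGamma} by summing the costs of its constituent steps, reusing the complexity estimates already obtained for the subroutines it invokes. Concretely I would split the algorithm into five parts: (i) the single call to Algorithm \ref{alg:X2}, which outputs $\Aut(\Vnt|U)$ together with a basis of $\HH^1(V,\mu_n)$; (ii) the calls to Algorithm \ref{alg:inertia}, one for each point $z$ of $X-U$ (after choosing a preimage of $z$ in the smooth compactification of $V$), which output the inertia subgroups $I'_z\subseteq\Aut(\Vnt|U)$; (iii) the linear algebra over $\Lambda$ computing $\Homcr(\Aut(\Vnt|U),M^i)$ and $\Homcr(I'_z,M^i)$ for all $z$ and all $i\in\{0,\dots,t\}$, together with the truncated bar resolution; (iv) the assembly of the morphism $\Psi$ of \Cref{th:RGamma}, i.e.\ writing down the finitely many $\Lambda$-linear maps $\partial_M$, $\partial_G$, the fibre maps $\phi_z$, and the restriction maps $\res_G^{I_z}$ built via the section of \Cref{lem:secIP}; and (v) forming the shifted cone $\cone(\Psi)[-1]$, which is essentially free. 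I would then check that the sum of the five bounds is the expression in the statement.

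For step (i), the complexity statement following Algorithm \ref{alg:X2} bounds the cost by $\mathsf{H1Const}(k_0,n,d,(2g+r)[V:U],r[V:U])$: here one uses that the genus $g_V$ of $V$ is at most $(2g+r)[V:U]$, that $V$ has $r_V\leqslant r[V:U]$ points at infinity, and that $V$ still admits a plane model of degree $d$, the internal coordinate computations (Algorithm \ref{alg:coord}) being polynomial and dominated by the computation of a basis of $\HH^1(V,\mu_n)$. For step (ii), the lemma following Algorithm \ref{alg:inertia} bounds one call by $[V:U]\bigl(n^{2g+r}+(2g+r)\mathsf{Root}\bigl(k_0,n,n^{([V:U](2g+r))^2}\bigr)\bigr)$, so multiplying by the number of such points keeps every summand of exactly the shape already present in the claimed bound. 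The key auxiliary estimate is the Gysin sequence: since $V$ is affine, $\HH^1(V,\mu_n)$ is free of rank $2g_V+r_V-1$, so $|\Aut(\Vnt|U)|=[V:U]\,|\HH^1(V,\mu_n)|$ — and likewise every inertia quotient $I_{\tilde z}/P_{\tilde z}$ — is crudely bounded by $n^{([V:U](2g+r))^2}$.

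Steps (iii)--(v) are pure linear algebra over $\Lambda$. Writing $G'\coloneqq\Aut(\Vnt|U)$, the truncated bar resolution $\tau_{\geqslant -1}P_{G'}$ has terms $\Lambda[G']$ and $\Lambda[G'\times G']$, of size polynomial in $|G'|$; computing $\Homcr(G',M^i)$ (and likewise over each $I'_z$) amounts to solving a $\Lambda$-linear system whose size is polynomial in $|G'|$ and in the number $m$ of generators of $M^i$; and there are $t+1$ such degrees to handle, with the fibres $\F_z^i$ contributing systems of comparable size. Hence steps (iii)--(v) together cost $\Poly\bigl(n^{([V:U](2g+r))^2},m,t\bigr)$, and summing over (i)--(v) gives the stated bound. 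For the finite-field case $k_0=\FF_q$ I would substitute the explicit complexities recalled in \Cref{subsec:exalg}: Huang and Ierardi's algorithm computes $\HH^1(V,\mu_n)$ in time polynomial in $n^{g_V}$, $n^{d}$, $\log q$ with $g_V\leqslant(2g+r)[V:U]$, and extracting an $n$-th root in a degree-$n^{e}$ extension of $\FF_q$ costs $\Poly(n^{e},\log q)$; folding $n^{g_V}$ and the $\mathsf{Root}$ terms into the other exponential factors collapses everything into $\Poly\bigl(n^{([V:U](2g+r))^2},n^{d},m,\log q,t\bigr)$.

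The hard part will not be any individual estimate but the uniform bookkeeping of how the relevant invariants grow under the passage from $U$ to $V$ and then to $\Vnt$: one must verify once and for all that the genus and number of points at infinity of $V$ are $\leqslant(2g+r)[V:U]$, that $|\HH^1(V,\mu_n)|$ — hence $|G'|$ and all the inertia quotients $I_{\tilde z}/P_{\tilde z}$ — is $\leqslant n^{([V:U](2g+r))^2}$, that the number of generators of $M$ and of the fibres $\F_z^i$ entering the linear algebra stays $O(m)$, and that every $n$-th root extracted inside Algorithms \ref{alg:X2} and \ref{alg:inertia} lives in an extension of $k_0$ of degree at most $n^{([V:U](2g+r))^2}$. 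Once these uniform bounds are available, the final estimate is the routine sum described above.
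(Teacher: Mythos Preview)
Your proposal is correct and follows exactly the same approach as the paper: sum the complexities of the subroutines (Algorithms \ref{alg:X2} and \ref{alg:inertia}), account for the linear algebra over $\Lambda$ needed to compute crossed homomorphisms, and for $k_0=\FF_q$ substitute Huang and Ierardi's bound for $\mathsf{H1Const}$. The paper's own proof is a two-sentence sketch; you have simply unpacked the bookkeeping (the bounds on $g_V$, $r_V$, $|\Aut(\Vnt|U)|$, and the degree of the extensions in which the roots lie) that the paper leaves implicit.
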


\begin{rk} The only existing algorithm computing $\HH^1(X,\F)$ when $\F$ is locally constant is Jin's algorithm; its complexity is exponential in $|M|^{\log|M|}$ \cite[Th. 1.2]{jinbi_jin}, to which the complexity of our algorithm compares favourably.
\end{rk}

\subsection{Improving complexity when $k_0$ is finite}\label{subsec:ffieldcoh}

Here we consider the case where $k_0=\FF_q$ is a finite field, $X_0$ is a smooth curve over $k_0$, and $\F_0$ is a constructible sheaf of $\FF_\ell$-vector spaces on $X_0$, where $\ell$ is a prime not dividing $q$. Let $U_0$ be an open subset of $X_0$ on which $\F_0$ is locally constant. Denote by $V_0\to U_0$ an étale Galois cover such that $\F_0|_{V_0}$ is constant with fibre $M$. For simplicity, we assume $V_0$ to be geometrically connected; if it were not, the field extension $\FF_Q$ defined below should be replaced by its compositum with the field extension defined in \Cref{subsec:X2gal}.
Write $m=\dim_{\FF_\ell}(M)$. Denote by $k=\overline{\FF_q}$ an algebraic closure of $k_0$, and by $X, U, V$ the base changes of $X_0, U_0, V_0$ to $k$. Denote by $g_X$ the genus of $X$ and by $r$ the number of points of $X-U$. 

\begin{lem} Set $D=|\GL_{m(2g_X+r)}(\FF_\ell)|$ and $Q=q^D$. Consider a basis $(D_1,f_1),\dots, (D_s,f_s)$ of the $\FF_\ell$-vector space $\HH^1(V,\mu_\ell)(\FF_Q)$, that is, the subspace of elements of $\HH^1(V,\mu_\ell)$ invariant under the action of $\Gal(k|\FF_Q)$. Denote by $V\Qlt$ the étale Galois cover of $V$ defined by the function field extension $k(V)(\sqrt[\ell]{f_1},\dots,\sqrt[\ell]{f_s})$. The map $\HH^1(U,\F|_U)\to \HH^1(V\Qlt,\F|_{V\Qlt})$ is trivial.
\end{lem}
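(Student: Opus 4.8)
The plan is to use functoriality to push the question into $\HH^1(V,\F|_V)$, where everything can be expressed through the Kummer group $\HH^1(V,\mu_\ell)$, and then to verify that the integer $D$ has been made large enough for two independent reasons: to trivialise the Galois action on $\HH^1(U,\F|_U)$, and to force $\mu_\ell(k)$ inside $\FF_Q$.

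Since the map in the statement is pullback along $V\Qlt\to V\to U$, it factors through $\HH^1(V,\F|_V)$, so it suffices to show that the image of $\HH^1(U,\F|_U)\to\HH^1(V,\F|_V)$ vanishes in $\HH^1(V\Qlt,\F|_{V\Qlt})$. Because $\F|_V$ is the constant sheaf with fibre $M$ and $k$ is algebraically closed, there are $\Gal(k|\FF_q)$-equivariant, functorial isomorphisms $\HH^1(V,\F|_V)\simeq\HH^1(V,\mu_\ell)\otimes_{\FF_\ell}\mu_\ell(k)^\vee\otimes_{\FF_\ell}M$, and likewise for $V\Qlt$. Each class $[D_i,f_i]\in\HH^1(V,\mu_\ell)$ (in the notation of \Cref{lem:H1div}) has trivial pullback to $V\Qlt$, whose function field is $k(V)(\sqrt[\ell]{f_1},\dots,\sqrt[\ell]{f_s})$: the corresponding $\mu_\ell$-torsor acquires a section there, as $f_i$ becomes an $\ell$-th power. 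Hence the subspace $\HH^1(V,\mu_\ell)(\FF_Q)=\bigoplus_{i=1}^s\FF_\ell\,[D_i,f_i]$ lies in the kernel of $\HH^1(V,\mu_\ell)\to\HH^1(V\Qlt,\mu_\ell)$, and, tensoring with the free $\FF_\ell$-module $\mu_\ell(k)^\vee\otimes M$, the submodule $\HH^1(V,\mu_\ell)(\FF_Q)\otimes_{\FF_\ell}\mu_\ell(k)^\vee\otimes_{\FF_\ell}M$ lies in the kernel of $\HH^1(V,\F|_V)\to\HH^1(V\Qlt,\F|_{V\Qlt})$. So it remains to show that the image of $\HH^1(U,\F|_U)$ is contained in this submodule.

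I would then use two properties of $D=|\GL_{m(2g+r)}(\FF_\ell)|$ and $Q=q^D$. First, the scalar matrices form a subgroup of $\GL_{m(2g+r)}(\FF_\ell)$ of order $\ell-1$, so $(\ell-1)\mid D$, whence $q^D\equiv1\pmod\ell$; thus $\mu_\ell(k)\subset\FF_Q$ and $\Gal(k|\FF_Q)$ acts trivially on $\mu_\ell(k)$. Second, $\F|_U$ is locally constant of rank $m$ on the smooth affine curve $U$ and tamely ramified at the $r$ points of $X-U$, so the Grothendieck--Ogg--Shafarevich formula together with Poincaré duality gives $\dim_{\FF_\ell}\HH^1(U,\F|_U)\le m(2g+r)$. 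The continuous action of $\Gal(k|\FF_q)\simeq\widehat{\ZZ}$ on the finite $\FF_\ell$-vector space $\HH^1(U,\F|_U)$ has cyclic image, whose order divides $|\GL_N(\FF_\ell)|$ with $N=\dim_{\FF_\ell}\HH^1(U,\F|_U)\le m(2g+r)$, hence divides $D$ since $\GL_N(\FF_\ell)$ embeds in $\GL_{m(2g+r)}(\FF_\ell)$. Equivalently, the $D$-th power of Frobenius, which topologically generates $\Gal(k|\FF_Q)$, acts trivially on $\HH^1(U,\F|_U)$.

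To finish I would combine the two: the restriction map $\HH^1(U,\F|_U)\to\HH^1(V,\F|_V)$ is $\Gal(k|\FF_q)$-equivariant for the Galois structures coming from the $\FF_q$-models, so by the second property its image lies in $\HH^1(V,\F|_V)^{\Gal(k|\FF_Q)}$; by the first property $\Gal(k|\FF_Q)$ acts trivially on $\mu_\ell(k)^\vee\otimes M$, so this fixed part is $\HH^1(V,\mu_\ell)^{\Gal(k|\FF_Q)}\otimes_{\FF_\ell}\mu_\ell(k)^\vee\otimes_{\FF_\ell}M=\HH^1(V,\mu_\ell)(\FF_Q)\otimes_{\FF_\ell}\mu_\ell(k)^\vee\otimes_{\FF_\ell}M$, which we saw sits in the kernel of $\HH^1(V,\F|_V)\to\HH^1(V\Qlt,\F|_{V\Qlt})$; hence the composite $\HH^1(U,\F|_U)\to\HH^1(V\Qlt,\F|_{V\Qlt})$ vanishes. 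The step I expect to require the most care is the bound $\dim_{\FF_\ell}\HH^1(U,\F|_U)\le m(2g+r)$: it is the only ingredient that is not pure formalism, it is what pins down the choice of $D$, and it depends on $\F|_U$ being tamely ramified along $X-U$ (which is where the hypothesis $\ell\nmid q$ is used); everything else is functoriality, the Kummer dictionary of \Cref{lem:H1div}, and careful bookkeeping with the Tate twist $\mu_\ell(k)^\vee$.
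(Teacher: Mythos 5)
Your argument is essentially the paper's: both rest on (i) the bound $\dim_{\FF_\ell}\HH^1(U,\F|_U)\leqslant m(2g+r)$, which forces the $D$-th power of Frobenius to act trivially on $\HH^1(U,\F|_U)$ so that the image of $\HH^1(U,\F|_U)$ in $\HH^1(V,\F|_V)$ lands in the $\Gal(k|\FF_Q)$-invariants, (ii) the inclusion $\mu_\ell(k)\subset\FF_Q$ (which you, unlike the paper, actually justify via $(\ell-1)\mid D$), and (iii) the Kummer-theoretic vanishing of the classes $[D_i,f_i]$ in $\HH^1(V\Qlt,\mu_\ell)$; your only real deviation is proving (i) by Grothendieck--Ogg--Shafarevich where the paper counts generators of the prime-to-$p$ fundamental group, which is cosmetic. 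One correction: your parenthetical asserting that tameness of $\F|_U$ along $X-U$ comes from $\ell\nmid q$ is false --- wild inertia is pro-$p$ and $\GL_m(\FF_\ell)$ generally contains nontrivial $p$-subgroups, so a lisse $\FF_\ell$-sheaf can perfectly well be wildly ramified --- although you are in good company, since the paper's own proof makes the same tacit tameness assumption when it declares $\HH^1(U,\F)$ to be a quotient of $\Homcr(\pi_1(U)^{(p')},M)$.
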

\begin{proof}
We know that the action of $\Gk$ factors through a finite quotient $\Gal(k_1|k_0)$, where $[k_1:k_0]$ divides $\Aut_{\FF_\ell}(\HH^1(U,\F))$. Recall the prime-to-$p$ fundamental group of $U$ is generated by at most $2g+r$ elements.
Now $\HH^1(U,\F)$ is a quotient of $\Homcr(\pi_1(U)^{(p')},M)$; its dimension as an $\FF_\ell$-vector space is bounded above by $m(2g_X+r)$. Therefore, $\Aut_{\FF_\ell}(\HH^1(U,\F))$ injects into $\GL_{m(2g_X+r)}(\FF_\ell)$.  Note that since $\ell$ divides $Q-1$, the field $\FF_Q$ contains a primitive $\ell^{\rm th}$ root of unity $\zeta$, and the isomorphism $\HH^1(V,\FF_\ell)\to \HH^1(V,\mu_\ell)$ defined by $1\mapsto \zeta$ is $\Gal(k|\FF_Q)$-equivariant. 
Since the $U$-automorphisms of $V$ are defined over $\FF_q$, the set $\HH^1(V,\mu_\ell)(\FF_Q)$ is stable under the action of $\Aut(V|U)$, and $V'\to U$ is still Galois. 
The construction of $V\Qlt$ ensures that $\HH^1(V,\FF_\ell)(\FF_Q)\to\HH^1(V\Qlt,\FF_\ell)$ is trivial. The $m$ copies of $\F|_V\simeq\FF_\ell^m$ being stable under the action of $\Gk$, the map \[\HH^1(U,\F)\to \HH^1(V\Qlt,\F)\xrightarrow{\sim} \HH^1(V\Qlt,\FF_\ell)^m\] factors through $\HH^1(V,\FF_\ell)(\FF_Q)^m$, and is also trivial.
\end{proof}

Hence, we may use $V\Qlt$ instead of $V^{\langle \ell \rangle}$ in Algorithm \ref{alg:RGamma}. Note that $D\leqslant \ell^{(m(2g+r))^2}$. 

\paragraph{Computing $\HH^1(V,\FF_\ell)(\FF_Q)$} This group is isomorphic to $\HH^1(V,\mu_\ell)(\FF_Q)$.
Denote by $\bar V$ the smooth projective curve containing $V$, and by $J_{\bar V}$ its Jacobian. Consider a basis $(D_i,f_i)_{1\leqslant i\leqslant s}$ of $\HH^1(V,\mu_\ell)(\FF_Q)$. Here, the $D_i$ are divisors on $V$ such that $\ell D_i=\div_V(f_i)$. The proof of \Cref{lem:H1div} tells us that we may assume some of these pairs to form a basis of $J_{\bar V}[\ell](\FF_Q)$. For the remaining ones, the elements $M_i\coloneqq \ell D_i-\div_{\bar V}(f_i)$ form a basis of the space of $\Gal(k|\FF_Q)$-invariant elements of the kernel of the following map. \[ \begin{array}{rcl}\HH^0(\bar V - V,\FF_\ell)&\longrightarrow& \FF_\ell \\ (\lambda_P)_{P\in\bar V- V} &\longmapsto& \sum_P\lambda_P\end{array}\] Finding the $D_i$ amounts to dividing the $M_i$ by $\ell$ in $J_{\bar V}$. Here is how to do this.
Any element of $J(\FF_Q)$ has order dividing $Q-1$. Write $Q-1=\ell^\alpha s$ with $s$ prime to $\ell$. Then $sM_i\in J[\ell^\alpha](\FF_Q)$, and we may find in $J[\ell^{\alpha+1}](\FF_Q)$ an element $E_i$ such that $\ell E_i=sM_i$: to do this, compute $J[\ell^{\alpha+1}](\FF_Q)$ and use linear algebra. Actually, given the definition of $\alpha$, $J[\ell^{\alpha+1}](\FF_Q)=J[\ell^{\alpha}](\FF_Q)$. Considering integers $u,v$ such that $u\ell+vs=1$, we have  $M_i=\ell \cdot (uM_i+vE_i)$.\\

\paragraph{Complexity} Since $Q=q^D$ where $D=|\GL_{m(2g_X+r)}(\FF_\ell)|$, the integers $\alpha$ and $s$ can be computed easily.
As soon as $q^{\ell-1}\neq 1\mod \ell^2$, we know $q^{\ell-1}$ has order $\ell^{\alpha-1}$ in $(\ZZ/\ell^\alpha \ZZ)^\times$ and \[ \alpha-1\leqslant v_\ell(D)=\frac{(m(2g_X+r))(m(2g_X+r)-1)}{2}\]
which is polynomial in $m,g_X,r$. The complexity of computing $\HH^1(V,\mu_\ell)(\FF_Q)$ is dominated by the computation of $J[\ell^\alpha](\FF_Q)$. Couveignes' algorithm computes $J[\ell^\alpha](\FF_Q)$ in time polynomial in $\ell^\alpha$, the genus of $\bar V$ and $\log(Q)$, assuming the characteristic polynomial of the $Q$-Frobenius on $V$ is known.

\subsection{Potential application: point counting on surfaces}

Let $X_0$ be a smooth projective surface over a finite field $k_0=\FF_q$. Denote by $k$ an algebraic closure of $k_0$, and set $X=X_0\times_{k_0}k$. Consider the problem of computing $|X(k_0)|$. The usual approach, as in Schoof's algorithm, is to compute this number modulo $\ell$ for enough primes $\ell$ up to $O(\log q)$.

The Lefschetz theorem reduces this question to computing the trace of the Frobenius on $\HH^i(X,\FF_\ell)$. The classical way of computing these groups, as in \cite[§V.3]{milneEC}, is by using a Lefschetz pencil, which yields a fibration $\pi\colon \tilde X\to \PP^1$, where $\tilde X$ is a blowup of $X$ at a finite number of points. Edixhoven conjectured in \cite[Epilogue]{edixcouv} that this strategy might allow us to compute $|X(k_0)|$ in time polynomial in $\log(q)$. Here is where we stand on this conjecture.
The sheaf $\F\coloneqq \R^1\pi_\star\FF_\ell$ is a constructible sheaf on the projective line.  It is locally constant on the open subset $U$ of $\PP^1$ over which the fibres of $\pi$ are smooth curves. For $z\in \PP^1-U$, we know how to compute $\RG(X_z,\FF_\ell)$. Moreover, given an explicit description of $\F$, we know how to compute $\RG(\PP^1,\F)$. Denote by $X_\bareta$ the generic fibre of $\pi$, and by $g_\bareta$ its genus. A trivialising cover $V\to U$ of $\F|_U$ is given by the normalisation of $U$ in an extension of $K$ of $k(t)$ over which $\Pic(X_{\bareta})[\ell]$ is defined. The following data helps us estimate the complexity we need for the different steps of the algorithms: \begin{itemize}
\item the degree $[K:k(t)]$ is smaller than $\ell^{4g_\bareta^2}$;
\item the number $r=|\PP^1-U|$ only depends on $\pi$ and not on $\ell$;
\item the genus $g_V$ of $V$ is bounded by $r\ell^{4g_\bareta^2}$.
\end{itemize} 
Computing the whole cover $V\lt$ would be too costly. However, as suggested in section \ref{subsec:ffieldcoh}, it is sufficient to compute a subcover $V\Qlt$ of $V\lt$ using only elements of $\HH^1(V,\mu_\ell)$ defined over a degree $D=O(\ell^{4g_\bareta^2 r^2})$ extension $\FF_Q$ of $\FF_q$. Hence, if we could compute: \begin{itemize}
\item $\HH^1(X_\bareta,\mu_\ell)$ in time polynomial in $\ell$ and $\log(q)$,
\item $\HH^1(V,\mu_\ell)(\FF_Q)$ in time polynomial in $\ell,\log Q$ and the genus of $V$,
\end{itemize}
then we should be able to compute the $\HH^i(X,\F)$ with their Frobenius action in time $\Poly(\ell,\log q)$. Mascot recently described an algorithm to deal with the first item of the list \cite[Alg. 2.2]{mascot_2023} using $p$-adic approximation; however, parts of his method are not yet rigorous \cite[Rk. 4.3]{mascot_2023}.\\

For the moment, this is nothing more than wishful thinking: all existing algorithms to compute $\HH^1(V,\mu_\ell)$, even for projective curves, have complexity exponential either in $\log(q)$ or in the genus of $V$. However, there is some hope.   Harvey's algorithm \cite[Th. 1]{harvey}, which computes the zeta function of hyperelliptic curves, reaches an average polynomial-time complexity. Combined with Couveignes' algorithm and \Cref{subsec:ffieldcoh}, this allows for an average polynomial-time complexity for the computation of $\HH^1(V,\mu_\ell)(\FF_Q)$ in the case where $V$ is an open subset of a hyperelliptic curve.
\section{First example: sheaves on subschemes of $\PP^1$}\label{sec:examples1}

\subsection{The cover}

Take $n=2$. Let $k_0$ be a field of odd characteristic in which $-1$ is not a square. Consider the degree 2 (ramified) Galois cover 
\[\bar f\colon \begin{array}[t]{rcl} \PP^1&\longrightarrow& \PP^1 \\ y&\longmapsto& y^2\end{array}\]
whose automorphism group is generated by $\tau\colon y\mapsto -y$. Set $U=\PP^1-\{ 0,1,\infty\}$ and $V=f^{-1}U=\PP^1-\{ 0,\pm 1,\infty \}$, and consider the étale cover $f\colon V\to U$ induced by $\bar f$.

\paragraph{Computation of $\Vnt$} The group $\HH^1(V,\mu_2)\simeq\Lambda^3$ is generated by the divisor-function pairs $(0-\infty,y),(1-\infty,y-1),(-1-\infty,y+1)$. The cover $\Vnt\to V$ with group $\HH^1(V,\Lambda)^\vee$ corresponds to the field extension $k(\sqrt{y},\sqrt{y-1},\sqrt{y+1})/k(y)$. The corresponding cover of $\bar V=\PP^1$ is the map 
\[ \begin{array}{rcc} \Proj k[z_0,z_1,z_2,z_3]/(z_1^2-(z_0^2-z_3^2),z_2^2-(z_0^2+z_3^2)) &\longrightarrow& \Proj k[y_0,y_1] \\ (z_0:z_1:z_2:z_3)&\longmapsto& (z_0^2:z_3^2)\end{array}\]
which is ramified above $0,\pm 1,\infty$. 

\paragraph{Computation of $\Aut(\Vnt|U)$} The automorphism group $G\coloneqq \Aut(\Vnt|U)$ has order 16; in order to compute all of its elements, it suffices to compute a preimage of $\tau$ in $\Aut(\Vnt|U)$. Such a preimage is given by $\gamma \colon (z_0:z_1:z_2:z_3)\mapsto (\sqrt{-1}z_0:\sqrt{-1}z_2:\sqrt{-1}z_1:z_3)$. Let \[
\begin{array}{rcl} \sigma_1\colon (z_0:z_1:z_2:z_3)\mapsto (-z_0:z_1:z_2:z_3)\\ \sigma_2\colon (z_0:z_1:z_2:z_3)\mapsto (z_0:-z_1:z_2:z_3) \\ \sigma_3\colon (z_0:z_1:z_2:z_3)\mapsto (z_0:z_1:-z_2:z_3)\end{array} \]  be the obvious generators of $\Aut(\Vnt|V)\triangleleft G$. Then $\gamma\sigma_2=\sigma_3\gamma$ and $\gamma\sigma_3=\sigma_2\gamma$, which implies that $\langle \sigma_2,\sigma_3\rangle$ is normal in $G$. It is easy to check that the composite map \[\langle \gamma \rangle \to G/\langle \sigma_2,\sigma_3\rangle\] is an isomorphism; therefore, \[ G=\langle \sigma_2,\sigma_3 \rangle \rtimes \langle \gamma\rangle.\]

\subsection{Cohomology of a locally constant sheaf}

The sheaf $\F\coloneqq f_\star\Lambda$ is locally constant on $U$, trivialised by the cover $f\colon V\to U$ since $f^\star f_\star \Lambda\simeq \Lambda^2$. It corresponds to the $\Aut(V|U)$-module $\Lambda^2$, where the non-trivial element of $\Aut(V|U)$ exchanges the two copies of $\Lambda$.  Since $f$ is finite, $\R f_\star \Lambda=(f_\star\Lambda)[0]$ and there is a canonical isomorphism \[\RG(U,f_\star \Lambda)=\RG(V,\Lambda).\] We therefore expect to find \[\HH^1(U,\F)=\HH^1(V,\Lambda)\simeq\Lambda^3.\]

\paragraph{Computing $\RG(U,\F)$} We know that $\RG(U,\F)$ is represented by the following two-term complex: 
\[ \Lambda^2 \to \Homcr(G,\Lambda^2). \]
A crossed homomorphism $f\colon G\to \Lambda^2$ is determined by the images of $\sigma_1,\sigma_2,\sigma_3,\gamma$. Using the relations $\gamma\sigma_1=\sigma_1\gamma, \gamma\sigma_2=\sigma_3\gamma$ and $\gamma^2=\sigma_1\sigma_2\sigma_3$, we see that such a map is uniquely determined by a tuple $(a,a_1,a_2,a_3)\in \Lambda^4$; the corresponding map $f$ is defined by $f(\sigma_1)=(a_1,a_1)$, $f(\sigma_2)=(a_2,a_3)$, $f(\sigma_3)=(a_3,a_2)$ and $f(\gamma)=(a,a+a_1+a_2+a_3)$. The principal crossed homomorphisms correspond to $(0,0,0,0)$ and $(1,0,0,0)$. Hence the complex above is isomorphic to 
\[ \begin{array}{rcl}\Lambda^2 &\longrightarrow&\Lambda^4 \\ (a,b) & \longmapsto &(a+b,0,0,0) \end{array} \] and its cohomology groups are $\Lambda$ and $\Lambda^3$, as expected. 

\paragraph{Computing the Galois action} The action of $\mathfrak{G}_0=\Gal(k|k_0)$ on $\Aut(Y|X)$ clearly factors through the quotient $\Gal(k_0(\sqrt{-1})|k_0)$. The latter group is generated by $\phi\colon \sqrt{-1}\mapsto -\sqrt{-1}$. The automorphism $\phi$ acts trivially on $\sigma_1,\sigma_2,\sigma_3$, and \[\phi\cdot \gamma=\sigma_1\sigma_2\sigma_3\gamma\colon (z_0 : z_1 : z_2 :z_3)\mapsto (-\sqrt{-1}z_0:-\sqrt{-1}z_1:-\sqrt{-1}z_2:z_3).\] The action of $\phi$ on $\Lambda^2$ is trivial, and its action on $\Homcr(G,\Lambda^2)\simeq \Lambda^4$ is $(\phi\cdot f)(x)=\phi f(\phi^{-1}x)=f(\phi^{-1}x)$. Explicitly, since $\phi\cdot\gamma=\sigma_1\sigma_2\sigma_3\gamma$, this action is given by \[\phi\cdot (a,a_1,a_2,a_3)=(a+a_1+a_2+a_3,a_1,a_2,a_3).\]

\subsection{Ramification}

The following illustrates \Cref{subsec:ramgrp} and provides a few results that will be used in the next example.
\paragraph{Ramification} Let $Z,W,W'$ denote the sets of points at infinity of $U,V,\Vnt$ respectively. The following table gives an overview of the situation.

\begin{table}[H]
\resizebox{\textwidth}{!}
{\begin{tabular}{|l|clll|clllclll|clll|}
\hline
Points in $Z$                                                                                  & \multicolumn{4}{c|}{$0$}                                                                                  & \multicolumn{8}{c|}{$1$}                                                                                                                                                                                                     & \multicolumn{4}{c|}{$\infty$}                                                                       \\ \hline
\begin{tabular}[c]{@{}l@{}}Preimages in $W$ \\ Ramification \end{tabular}  & \multicolumn{4}{c|}{\begin{tabular}[c]{@{}c@{}}$0$\\ index $2$\end{tabular}}                                    & \multicolumn{4}{c|}{\begin{tabular}[c]{@{}c@{}}$-1$\\ index $1$\end{tabular}}                                           & \multicolumn{4}{c|}{\begin{tabular}[c]{@{}c@{}}$1$\\ index $1$\end{tabular}}                                   & \multicolumn{4}{c|}{\begin{tabular}[c]{@{}c@{}}$\infty$\\ index $2$\end{tabular}}                         \\ \hline
\begin{tabular}[c]{@{}l@{}}Preimages in $W'$ \\ Ramification \end{tabular} & \multicolumn{4}{c|}{\begin{tabular}[c]{@{}c@{}}4 points \\index 4\end{tabular}} & \multicolumn{4}{c|}{\begin{tabular}[c]{@{}c@{}}4 points \\ index 2 \end{tabular}} & \multicolumn{4}{c|}{\begin{tabular}[c]{@{}c@{}}4 points \\index 2 \end{tabular}} & \multicolumn{4}{c|}{\begin{tabular}[c]{@{}c@{}}4 points \\ index 4 \end{tabular}} \\ \hline
\begin{tabular}[c]{@{}l@{}}A preimage in $W'$ \\ Its inertia group\end{tabular} & \multicolumn{4}{c|}{\begin{tabular}[c]{@{}c@{}}$P_0=(0,\sqrt{-1},1)$ \\ $\langle \gamma\sigma_2\rangle\simeq \mu_4(k)$\end{tabular}} & \multicolumn{4}{c|}{\begin{tabular}[c]{@{}c@{}}$P_{-1}=(\sqrt{-1},\sqrt{-2},0)$
 \\ $\langle \sigma_3\rangle\simeq \mu_2(k)$ \end{tabular}} & \multicolumn{4}{c|}{\begin{tabular}[c]{@{}c@{}}$P_1=(1,0,\sqrt{2})$
 \\ $\langle \sigma_2\rangle\simeq \mu_2(k)$ \end{tabular}} & \multicolumn{4}{c|}{\begin{tabular}[c]{@{}c@{}}$P_\infty=(1:1:1:0)$
 \\ $\langle \gamma\rangle\simeq \mu_4(k)$ \end{tabular}} \\ \hline
\end{tabular}}
\end{table}

The canonical isomorphism $I_{P_0}\to \mu_4(k)$ can be described explicitly as follows. The function $y$ is a uniformiser of $\Vnt$ at $P_0=(0,\sqrt{-1},1)$. The orbit of $y$ under the action of $I_{P_0}=\langle \gamma\sigma_2\rangle$ is $\{ \pm y,\pm \sqrt{-1}y\}$. Hence the set $\{ \frac{\sigma(y)}{y}(P_0)\mid\sigma \in I_{P_0}\}$ is exactly $\mu_4(k)$, and the isomorphism $I_{P_0}$ to $\mu_4(k)$ sends an element $\sigma\in I_{P_0}$ to $\frac{\sigma(y)}{y}(P_0)$.

The generator $\sqrt{-1}$ of $\mu_4(k)$ exchanges the two copies of $\Lambda$ in $M=\Lambda^2$. The $\Lambda$-module of crossed homomorphisms $\mu_4(k)\to M$ is isomorphic to $\Lambda^2$, and $\tau_{\leqslant 1}\RG(I_{P_0},M)$ is represented by the following complex. 

\[ \begin{array}{rcl} \Lambda^2 &\to& \Lambda^2\\
(a,b)&\mapsto& (a+b,a+b) \end{array} \] 
The group $\F_0=\HH^0(I_{P_0},M)$ is generated by $(1,1)$, and $\HH^2_0(X,j_\star\F)=\HH^1(I_{P_0},M)$ is generated by the class of $(0,1)$. The compuation of the complex $\tau_{\leqslant 1}\RG(I_{P_{\infty}},M)$ is done in the same way and yields the same result. 
The group $I_{P_1}$ is canonically isomorphic to $\mu_2(k)$, and acts trivially on $M$. Therefore, $\tau_{\leqslant 1}\RG(I_{P_1},M)$ is represented by the following complex.
\[ \begin{array}{rcl}\Lambda^2 &\to& \Lambda^2 \\(a,b)&\mapsto &0 \end{array} \]
The computation for $P_{-1}$ is the same and yields the same result.

\subsection{Cohomology of a constructible sheaf}

We still consider a field $k_0$ of odd characteristic, where $-1$ is not a square. Define $k_1=k_0(\sqrt{-1})$.
Let us now consider the ramified cover of projective curves $\bar f\colon\PP^1\to \PP^1$ defined by $f$, and the sheaf $\F\coloneqq \bar f_\star\Lambda$ on $\PP^1$. Since $\bar f$ is unramified outside $0,\infty$, the sheaf $\F$ is locally constant on the open subset $\GG_m=\PP^1-\{ 0,\infty\}$, and $\LL\coloneqq \F|_{\GG_m}$ is trivialised by $\bar f|_{\GG_m}\colon \GG_m\to \GG_m, y\mapsto y^2$. The cover $\GG_m\nt$ is also $\GG_m\to\GG_m, z\mapsto z^2$, and the composite \[ \GG_m\nt\to \GG_m\xrightarrow{f} \GG_m \] is given by $z\mapsto z^4$. Its automorphism group is $G=\langle \gamma\colon z\mapsto \sqrt{-1}z\rangle\simeq\ZZ/4\ZZ$, and the inertia subgroups at $0$ and $\infty$ are both equal to $G$. Denote by $j$ the inclusion $\GG_m\to\PP^1$. We have $(j_\star\LL)_0=\Lambda$ and $(j_\star\LL)_{\infty}=\Lambda$. 
The adjunction units $\F_0\to (j_\star \mathscr{L})_0$ and $\F_\infty\to (j_\star \mathscr{L})_\infty$ are the identity maps $\Lambda\to \Lambda$. 

\paragraph{Computing $\RG(\GG_m,\LL)$} 
The crossed homomorphisms $G\to M=\LL_{\bareta}\simeq\Lambda^2$ are uniquely determined by the image of $(a,b)\in\Lambda^2$ under $\gamma$. The usual cochain complex representing $\tau_{\leqslant 1}\RG(G,M)=\tau_{\leqslant 1}\RG(\GG_m,\LL)$ is the following.
\[ \begin{array}{rcl}
\Lambda^2 &\longrightarrow& \Homcr(G,\Lambda^2)\\
(a,b)&\longmapsto& \left[ \gamma \mapsto (a+b,a+b)\right]
\end{array}\]
Therefore $\HH^1(G,M)$ is isomorphic to $\Lambda$, and the kernel of the map $\Lambda^2\to \HH^1(G,M)$ sending a crossed homomorphism to its cohomology class is $\langle (1,1)\rangle$ ; this map can be rewritten as \[ \begin{array}{rcl}\Lambda^2&\longrightarrow& \Lambda \\ (a,b)&\longmapsto& a+b.\end{array}\]

\paragraph{Computing $\RG(X,j_\star \mathscr{L})$} 
The element $\RG(X,j_\star \mathscr{L})[1]\in \DD^b_c(X,\Lambda)$ is the cone of \[\tau_{\leqslant 1}\RG(G,M)\to \HH^1(I_0,M)[-1]\oplus \HH^1(I_\infty,M)[-1]=\Lambda^2[-1].\] Therefore, $\RG(X,j_\star \mathscr{L})$ is represented by the complex 
\[ \Lambda^2\longrightarrow \Lambda^2\longrightarrow \Lambda^2 \]
where both morphisms are given by $(a,b)\mapsto (a+b,a+b)$. 

\paragraph{Computing $\RG(X,\F)$} 
Let us now turn to the computation of the map \[ \RG(X,j_\star \mathscr{L})\oplus \RG(Z,i^\star\F) \to \RG(Z,i^\star j_\star \mathscr{L}).\]
On the one hand, $\RG(Z,i^\star\F)=\HH^0(Z,i^\star\F)[0]=\F_0[0]\oplus \F_\infty[0]$. On the other hand, $\RG(Z,i^\star j_\star \mathscr{L})$ is represented by the complex
\[ \begin{tikzcd}
\Lambda^2\oplus\Lambda^2 \arrow[r,"{\alpha'}"]& \Lambda^2\oplus\Lambda^2 \arrow[r,"{\beta'}"]& \Lambda^2 \end{tikzcd}\]
where the arrows are given by $\alpha'\colon (a,b,c,d)\mapsto (a+b,a+b,c+d,c+d)$ and $\beta'\colon (a,b,c,d)\mapsto(a+b,c+d)$.
Hence the map we were looking for is
\[
\begin{tikzcd}
\Lambda^4 \arrow[r,"{\alpha}"]\arrow[d,"u"] & \Lambda^2 \arrow[r,"{\beta}"]\arrow[d,"v"] &\Lambda^2\arrow[d,"{\id}"] \\
\Lambda^4 \arrow[r,"{\alpha'}"] & \Lambda^4\arrow[r,"{\beta'}"] & \Lambda^2
\end{tikzcd}
\]
where, writing the upper left-hand side term as $\F_0\oplus\F_\infty\oplus M$, the arrows are given by \begin{itemize}[label=$\bullet$]
\item $u\colon(a,b,c,d)\mapsto (a+c,a+d,b+c,b+d)$
\item $\alpha \colon (a,b,c,d)\mapsto (c+d,c+d)$
\item $\beta \colon(a,b)\mapsto(a+b,a+b)$
\item $v\colon(a,b)\mapsto (a,b,a,b)$
\item $\alpha'\colon (a,b,c,d)\mapsto (a+b,a+b,c+d,c+d)$
\item $\beta'\colon(a,b,c,d)\mapsto(a+b,c+d)$.
\end{itemize} 
Computing the cone of this morphism and shifting by 1 yields the following complex, which represents $\RG(X,\F)$.
\[
\begin{tikzcd}
\Lambda^4 \arrow[r,"\partial_0"] & \Lambda^6 \arrow[r,"\partial_1"] & \Lambda^6 \arrow[r,"\partial_2"] & \Lambda^2
\end{tikzcd}
\]
\begin{itemize}[label=$\bullet$]
\item $\partial_0\colon(a,b,c,d)\mapsto (c+d,c+d,a+c,b+c,a+d,b+d)$
\item $\partial_1 \colon(a,b,c,d,e,f)\mapsto (a+b,a+b,a+c+d,b+c+d,a+e+f,b+e+f)$
\item $\partial_2\colon(a,b,c,d,e,f)\mapsto (a+c+d,b+e+f)$. 
\end{itemize}
The cohomology groups of this complex are $\HH^0=\langle (1,1,1,1)\rangle$, $\HH^1=0$ and $\HH^2=\langle \overline{(1,0,1,0,0,0)}\rangle\simeq\Lambda$. This result was to be expected: we have computed the cohomology of $(\PP^1\xrightarrow{x\mapsto x^2}\PP^1)_\star\Lambda$, which is the cohomology of the constant sheaf $\Lambda$ on $\PP^1$.

\paragraph{Computing the Galois action} The action of $\Gk$ on $\RG(X,\F)$ factors through the quotient $\Gal(k_0(\sqrt{-1})|k_0)$ of $\Gk$. Denote by $\sigma\colon \sqrt{-1}\mapsto -\sqrt{-1}$ the nontrivial element of $\Gal(k_0(\sqrt{-1})|k_0)$. The group $\Gk$ acts on $G$ by $\sigma\cdot \gamma=\gamma^3$, and trivially on $M$. Its action on $\tau_{\leqslant 1}\RG(G,M)=\Lambda^2\to \Lambda^2$ is trivial on the first term, and $(a,b)\mapsto (b,a)$ on the second. In particular, it acts trivially on $\HH^1(G,M)=\Lambda$.
The action of $\sigma$ on $\F_0,\F_\infty$ is also trivial. Hence the action of $\Gk$ on the complex
\[ \Lambda^4\to \Lambda^6\to\Lambda^6\to \Lambda^2 \]
representing $\RG(X,\F)$ is trivial on the first and last terms, \[\sigma\cdot (a,b,c,d,e,f)= (b,a,c,d,e,f)\] on the second term, and \[\sigma\cdot (a,b,c,d,e,f)= (a,b,d,c,f,e)\] on the third term. In particular,  $\Gk$ acts trivially on $\HH^0(X,\Lambda)$ and $\HH^2(X,\Lambda)$, as expected.

\section{Second example: sheaves on subschemes of an elliptic curve}\label{sec:examples2}

The examples in this section illustrate a non-trivial case in which we can easily compute the cohomology of sheaves using (almost) only functions that are already available in current computer algebra systems: when the locally constant part of the sheaf is trivialised by a subscheme of a hyperelliptic curve. 

\subsection{The cover}

Consider the finite field $k_0=\FF_{11}$, and the integer $n=2$ invertible in $k_0$. Consider the field extension $\FF_{121}=\FF_{11}(a)$, where $a$ generates the cyclic group $\FF_{121}^\times$ and $a^2+7a+2=0$. Denote by $\bar E$ the elliptic curve over $k=\overline{\FF_{11}}$ defined by the affine Weierstrass equation $y^2=(x-1)(x-2)(x-3)$. 
Let $\bar C$ be the genus 2 curve over $k$ given by the affine equation $y^2=(x^2-1)(x^2-2)(x^2-3)$. The curve $\bar C$ has two points at infinity $\infty_+,\infty_-$ which do not lie on the affine open defined by this equation. Consider the degree 2 cover $\bar f\colon \bar C\to \bar E$ given by $(x,y)\mapsto (x^2,y)$. It is ramified at the affine points $P=(0,4)$ and $Q=(0,7)$ of $\bar C$, whose images in $E$ are respectively $(0,4)$ and $(0,7)$. Denote by $C=\bar C-\{ P,Q\}$ and $E=\bar E-\{ \bar f(P),\bar f(Q)\}$ the affine curves obtained from $\bar C$ and $\bar E$ by removing the ramification locus. Denote by $f\colon C\to E$ the étale Galois cover induce by $\bar f$. Both curves $C$ and $E$ are obtained by base change from curves $C_0$ and $E_0$ defined over $k_0=\FF_{11}$.\\

Computing the Galois group of $C\nt\to E$ will allow us to determine the cohomology of any locally constant sheaf on $E$ trivialised by $f$. First, we need to compute $\HH^1(C,\mu_2)$. This is where we need to cheat a little since no algorithm performing this computation for a general curve has been implemented yet (see \Cref{subsec:exalg} for existing algorithms); fortunately, $\bar C$ being a genus two curve, we have other means of finding a generating set for this group. 

\paragraph{Computing $\HH^1(C,\mu_2)$} Denote by
\[P_1^\pm=(\pm 1,0), \qquad P_2^\pm=(\pm a^6,0),\qquad P_3^\pm =(\pm 5,0)\]
the points of $C$ with $y$-coordinate 0. A basis of $\Jac(\bar C)[2]$ is given by the classes of the divisors \[ D_1\coloneqq P_1^+-P_1^-,\qquad D_2\coloneqq P_2^+-P_2^-,\qquad D_3\coloneqq P_2^+-P_3^+,\qquad D_4\coloneqq P_1^+-P_3^-.\]
The rational functions \[ f_1\coloneqq \frac{x-1}{x+1},\qquad f_2\coloneqq \frac{x-a^6}{x+a^6},\qquad f_3\coloneqq\frac{x-a^6}{x-5},\qquad f_4\coloneqq\frac{x-1}{x+5}\]
all satisfy $2D_i=\div(f_i)$. Denote by $D_5$ the divisor $P-Q$, which is linearly equivalent to two times the divisor \[ \bar D_5\coloneqq (a^{41},a^{29})+(-a^{41},a^{29})-(\infty_++\infty_-).\] We found this divisor $\bar D_5$ using a brute-force search on $\Jac(\bar C)(\FF_{121})$; for hyperelliptic curves such as $\bar C$, this can also be done using division polynomials (see e.g. \cite[Theorem C]{eid_division}). In the particular case where $n=2$, division by 2 has even been explicitly described by Zarhin \cite[Th. 3.2]{zarhin_div}. The divisor of the rational function
 \[ f_5=\frac{y+a^8x^2+7}{x}\]
is $2\bar D_5-D_5$. Therefore, an $\FF_2$-basis of $\HH^1(C,\mu_2)$ is given by \[(D_1,f_1),\dots,(D_4,f_4),(\bar D_5,f_5).\]

\paragraph{Computing $\Aut(C\nt|E)$} The cover $C\nt\to C$ with group $\HH^1(C,\Lambda)^\vee$ is defined by its function field $k(C)(z_1,\dots,z_5)$ where $z_i^2=f_i$. Recall that we never need to compute a smooth model of $C\nt$. For reasons explained in section \ref{subsec:ramex}, we choose to replace $f_5$ with $x^2f_5$, which still yields the same function field.
The group $G=\Aut(C\nt|E)$ has order 64; it contains the normal subgroup $H=\Aut(C\nt|C)\simeq (\ZZ/2\ZZ)^5$ generated by the elements $\gamma_i\colon z_i\mapsto -z_i$. Let us now determine a preimage in $G$ of the generator $\sigma\colon (x,y)\mapsto (-x,y)$ of  $\Aut(C|E)$. First, we compute the divisors $\sigma^\star D_i$.
\[
\begin{array}{rllll}
\sigma^\star D_1 & = & -D_1 & &\\
\sigma^\star D_2 & = & -D_2 & &\\
\sigma^\star D_3 & = & D_1+D_3+\div(h_3) & \text{where} & h_3=\frac{y}{x^3+a^{58}x^2+a^2x+a^{54}}\\
\sigma^\star D_4 & = & D_2+D_4+\div(h_4) & \text{where} & h_4=\frac{y}{x^3+a^{80}x^2+a^{103}x+a^{114}}\\
\sigma^\star D_5 & = & D_5
\end{array}
\]
Note that $\sigma^\star f_3=h_3^2f_1f_3$, $\sigma^\star f_4=h_4^2f_2f_4$ and $\sigma^\star f_5=-f_5$. Since $a^{30}$ is a square root of $-1$ in $\FF_{121}$, the automorphism $\delta\in G$ given by \[\adjustbox{max width=\textwidth}{$ 
x\mapsto -x, \quad y\mapsto y,\quad z_1\mapsto \frac{1}{z_1},\quad z_2\mapsto \frac{1}{z_2},\quad z_3\mapsto h_3(x,y)z_1z_3,\quad z_4\mapsto h_4(x,y)z_2z_4,\quad z_5\mapsto a^{30}z_5$}\]
is a preimage of $\sigma$. In particular, since $h_3(x,y)h_3(-x,y)=h_4(x,y)h_4(-x,y)=-1$, the automorphism $\delta^2$ is given by \[ 
x\mapsto x, \quad y\mapsto y,\quad z_1\mapsto z_1,\quad z_2\mapsto z_2,\quad z_3\mapsto -z_3,\quad z_4\mapsto -z_4,\quad z_5\mapsto -z_5.\] Hence $\delta^2=\gamma_3\gamma_4\gamma_5$ and the order of $\delta$ as an element of $G$ is 4. Furthermore, $\delta\gamma_1=\gamma_1\gamma_3\delta$ and $\delta\gamma_2=\gamma_2\gamma_4\delta$. 
The elements $\gamma_3,\gamma_4,\gamma_5$ generate the center of $G$. Its commutator subgroup is $\langle \gamma_3=[\gamma_1,\delta],\gamma_4=[\gamma_2,\delta]\rangle$. Finally, let us compute the action of $\Gal(\FF_{121}|\FF_{11})=\langle\phi\colon a\mapsto 2a^{-1}\rangle$ on $G$, which can be done very easily since the elements of $G$ are defined by their action on coordinates of points. Predictably, $\phi$ acts trivially on the $\gamma_i$; it also sends $\delta$ to $\gamma_5\delta$.

\subsection{Ramification}\label{subsec:ramex}

Here is how to compute the preimages of the points $P$ and $Q$ in $C\nt$. We have computed its function field as $k(C\nt)=k(C)(z_1^2-f_1,\dots,z_5^2-f_5)$, and would now like to compute an actual affine model of $C\nt$. For each $i\in\{1\dots 5\}$, write $f_i=\frac{g_i}{h_i}$. For $P$ and $Q$ to have easily computable preimages, we can replace $f_i$ with $h_i^2f_i$ when $h_i(P)=0$ or $h_i(Q)=0$. This is only the case for $f_5$, which now reads $x(y+a^8x^2+7)$. The following affine curve is birational to $C\nt$:   
\[ \begin{array}{rl}\Spec k[x,y,z_1,\dots,z_5]/&(y^2-(x^2-1)(x^2-2)(x^2-3),\\
& (x+1)z_1^2-(x-1),\\
& (x+a^6)z_2^2-(x-a^6),\\
& (x-5)z_3^2-(x-a^6),\\
& (x+5)z_4^2-(x-1),\\
& z_5^2-x(y+a^8x^2+7)).
\end{array} \]
The $\frac{1}{2}|\HH^1(C,\mu_2)|=16$ preimages of $P=(0,4)$ in $C\nt$ are the points $(0,4,\pm 1, \pm a^{30}, \pm 3a^3, \pm 3a^{30},0)$. The preimages of $Q=(0,7)$ are the points $(0,7,\pm 1, \pm a^{30}, \pm 3a^3, \pm 3a^{30},0)$.
Choose two preimages $P_{C\nt}=(0,4,1, a^{30}, 3a^3, 3a^{30},0)$ and $Q_{C\nt}=(0,7,1, a^{30}, 3a^3, 3a^{30},0)$ of $P$ and $Q$ in this affine curve birational to $C\nt$, and denote by $P_E=(0,4),Q_E=(0,7)$ their respective images in $E$. The inertia group $I_{P_{C\nt}|P_E}\subset G$ has order $|I_{P|P_E}|\cdot |I_{P_{C\nt}|P}|=2\times 2=4$; it is generated by $\delta$. The same applies to $I_{Q_{C\nt}|Q}=\langle\delta\rangle$.

\subsection{Cohomology of a locally constant sheaf on $E$}

Consider the locally constant sheaf $\F$ on $E$ trivialised by $C$, with generic fibre $M=\Lambda^3$, defined by the representation:
\[ \begin{array}{rcl}
\Aut(C|E)&\longrightarrow& \GL_3(\Lambda) \\
\sigma&\longmapsto& \begin{pmatrix}
0 & 0  & 1\\
0 & 1 & 0 \\
1 & 0 & 0
\end{pmatrix}
\end{array} \]
Computations using Magma yield $\HH^0(E,\F)\simeq\Lambda^2$ and $\HH^1(E,\F)\simeq\Lambda^8$. More precisely, the 1-cocycles $c_1,\dots,c_8$ below form a basis of $\HH^1(G,M)$; the null-cohomologous cocycle $c'$ is the image of the 0-cocycle $(1~0~0)\in M$.
\begin{center}
\begin{tabular}{|c|c|c|c|c|c|c|}\hline
Cocycle $c$ & $c(\gamma_1)$ & $c(\gamma_2)$ & $c(\gamma_3)$ & $c(\gamma_4)$ & $c(\gamma_5)$ & $c(\delta)$ \\ \hline
$c_1$ & $(1~0~0)$ & $(0~0~0)$ & $(1~0~1)$  & $(0~0~0)$ & $(0~0~0)$ & $(0~0~1)$\\ \hline
$c_2$ & $(0~1~0)$ & $(0~0~0)$ & $(0~0~0)$ & $(0~0~0)$ & $(0~0~0)$ & $(0~0~0)$\\ \hline
$c_3$ & $(0~0~1)$ & $(0~0~0)$ & $(1~0~1)$  & $(0~0~0)$ & $(0~0~0)$ & $(0~0~1)$\\ \hline
$c_4$ & $(0~0~0)$ & $(1~0~0)$ & $(0~0~0)$ & $(1~0~1)$ & $(0~0~0)$ & $(0~0~1)$\\ \hline
$c_5$ & $(0~0~0)$ & $(0~1~0)$ & $(0~0~0)$ & $(0~0~0)$ & $(0~0~0)$ & $(0~0~0)$\\ \hline
$c_6$ & $(0~0~0)$ & $(0~0~1)$ & $(0~0~0)$ & $(1~0~1)$ & $(0~0~0)$ & $(0~0~1)$\\ \hline
$c_7$ & $(0~0~0)$ & $(0~0~0)$ & $(0~0~0)$ & $(0~0~0)$ & $(1~0~1)$ & $(0~0~1)$\\ \hline
$c_8$ & $(0~0~0)$ & $(0~0~0)$ & $(0~0~0)$ & $(0~0~0)$ & $(0~0~0)$ & $(0~1~0)$\\ \hline
$c'$ &  $(0~0~0)$ & $(0~0~0)$ & $(0~0~0)$ & $(0~0~0)$ & $(0~0~0)$ & $(1~0~1)$ \\\hline
\end{tabular}
\end{center}
The action of $\phi\in\Gal(\FF_{121}|\FF_{11})$ on $\Homcr(G,M)$ only affects the element $c_7$ of this basis, sending it to $\phi^\star c_7=c_7+c'$. Its action on $\HH^1(G,M)$ is therefore trivial.

\begin{rk}
We now know that the action of $\Gal(\FF_{121}|\FF_{11})$ on $\HH^1(E,\F)$ is trivial. Therefore, we could have chosen a subcover of $C\nt$ by first computing a basis of $\HH^1(C,\mu_2)(\FF_{11})$ and then taking $n^{\text{th}}$ roots of the functions appearing in this basis. With the notations above, such a basis is given by $(D_1,f_1), (D_2,f_2), (D_4,f_4),(D_3+\bar D_5,f_3f_5)$. Set $k(C')=k(C)(z_1,z_2,z_4,t_3)$ where $z_i^2=f_i$ and $t_3^2=f_3f_5$. Using the previous notations $h_3, h_4$, a preimage of $\sigma\in\Aut(C|E)$ in $\Aut(C'|E)$ is the automorphism $\delta$ given by: 
\[ 
x\mapsto -x, \quad y\mapsto y,\quad z_1\mapsto \frac{1}{z_1},\quad z_2\mapsto \frac{1}{z_2},\quad z_4\mapsto h_4(x,y)z_2z_4,\quad t_3\mapsto a^{30}h_3(x,y)t_3.\]
The group $\Aut(C'|E)$ only has order 32, and one can check that the map \[\RG(\Aut(C'|E),M)\to\RG(\Aut(C\nt|E),M)\] is a quasi-isomorphism.
\end{rk}

\begin{ack}
The author would like to thank David Madore and Fabrice Orgogozo for their continued help and support during his PhD thesis, whose main results led to the ones presented in this article. He is also grateful to the anonymous referee for their helpful questions and comments.
\end{ack}
\bigskip 

\let\oldaddcontentsline\addcontentsline
\renewcommand{\addcontentsline}[3]{}
\bibliography{article_computing_constructible}
\bibliographystyle{plain}
\end{document}